\documentclass[a4paper,11pt,reqno]{amsart}
\usepackage{amssymb,amsmath,amsthm}
\numberwithin{equation}{section}
\usepackage{natbib}
\usepackage{paralist}               
\usepackage[margin=0.75in]{geometry}
\usepackage[titletoc]{appendix}     %%  Designing the APPENDIX
\usepackage{mathrsfs}
\usepackage{setspace}
\usepackage[final]{graphicx}
\usepackage{epstopdf}
\usepackage{subcaption}
\usepackage{float}
\usepackage{enumitem}
\usepackage[bookmarks,bookmarksnumbered,bookmarksopen,breaklinks,linktocpage]{hyperref}
\newcommand{\esssup}{\operatorname*{ess\,sup}}
\newtheorem{Theorem}{Theorem}[section]

\newtheorem{Lemma}{Lemma}[section]

\theoremstyle{definition}           %% \theoremstyle{plain}, or \theoremstyle{remark}

\theoremstyle{remark}
\newtheorem{Remark}{Remark}[section]
\usepackage{multirow}
\usepackage{color,soul}
\usepackage{soul}
\usepackage{array}
\newcolumntype{L}[1]{>{\raggedright\let\newline\\\arraybackslash\hspace{0pt}}m{#1}}
\newcolumntype{C}[1]{>{\centering\let\newline\\\arraybackslash\hspace{0pt}}m{#1}}
\newcolumntype{R}[1]{>{\raggedleft\let\newline\\\arraybackslash\hspace{0pt}}m{#1}}

\begin{document}
	\title[Machine Learning for Control PDEs]{Computational Control of Nonlinear Partial Differential Equations Using Machine Learning}
	
	\author[Maximilian Kurbanov
	]{Maximilian Kurbanov
	}
	
	\address[Maximilian Kurbanov
	]{Department of Mathematics \\
		Texas A\&M University, College Station, TX, 77843 USA}
	
	\email[Maximilian Kurbanov
	]{\href{mailto:maxkurbanov@tamu.edu}{maxkurbanov@tamu.edu}}

	\author[Minh-Nhat Phung]{Minh-Nhat Phung }
	
	\address[Minh-Nhat Phung]{Department of Mathematics \\
		Texas A\&M University, College Station, TX, 77843 USA}
	
	\email[Minh-Nhat Phung]{\href{mailto:pmnt1114@tamu.edu}{pmnt1114@tamu.edu}}
	
	\author[Minh-Binh Tran]{Minh-Binh Tran}
	\address[Minh-Binh Tran]{Department of Mathematics \\
		Texas A\&M University, College Station, TX, 77843 USA}
	
	\email[Minh-Binh Tran]{\href{mailto:minhbinh@tamu.edu}{minhbinh@tamu.edu}}
	\thanks{ M.-N. Phung and M.-B. T are  funded in part by     NSF CAREER  DMS-2303146, and NSF Grants DMS-2204795, DMS-2305523,  DMS-2306379. }
	
	\begin{abstract}
		The numerical reconstruction of controls for nonlinear partial
differential equations (PDEs) remains a challenging and relatively
underdeveloped problem, despite the extensive literature on
controllability theory.  In this work, we introduce an
operator-decomposed physics-informed neural network framework,
called WeightedPINN, for approximating controls in nonlinear PDE
settings.  The method is designed for both internal and bilinear
control problems and incorporates the governing equation, boundary and
initial conditions, and terminal control constraints directly into the
training objective.

The main feature of WeightedPINN is that the different components of
the controlled PDE residual are weighted separately.  In particular,
the time derivative, directional diffusion terms, nonlinear response,
and control term are assigned independent adaptive space--time weights,
and the same weighted formulation is applied to the boundary, initial,
and terminal constraints.  This produces a control-aware residual
metric that is more sensitive to operator-level imbalance and to the
mechanism through which the unknown control enters the equation.

We provide a convergence analysis for the proposed method and present
numerical experiments for semilinear heat and wave equations with
internal and bilinear controls.  The high-dimensional experiments
demonstrate improved residual-based testing errors compared with the
standard PINN baseline, while lower-dimensional manufactured-solution
benchmarks show improved direct reconstruction errors for both the
state and the control against several adaptive and control-oriented
PINN methods.  The results suggest that WeightedPINN is particularly
effective in regimes where componentwise residual imbalance,
anisotropy, variable coefficients, or control-identification
sensitivity play a significant role.
	\end{abstract}
	
	\maketitle
	%%%%%%%%%%%%%%%%%%%%%%%%%%%%%%%%%%%%%%%%%%%%%%%%%%%%%%%%%%%%%%%%%%%%%%%%%%%%%%%%%%%%%%%%%%%%%%%%%%%%
	\allowdisplaybreaks
	%\tableofcontents

	\section{Introduction}

The control theory of nonlinear partial differential equations (PDEs)
has been the subject of sustained and intensive investigation over the
past several decades; we refer, for instance, to
\cite{Coron2007,Zuazua2005,Lions1988} and the extensive bibliography
therein. Despite this substantial body of work, the literature remains
comparatively limited on the numerical reconstruction of controls for
nonlinear systems. In particular, while many controllability results,
including local, global, approximate, and null controllability, have
been established under diverse structural assumptions, considerably
less is known about how to compute reliable and accurate numerical
approximations of the corresponding control functions.

This difficulty is not merely technical. The intrinsic nonlinearity of
the dynamics typically precludes the use of superposition principles
and may lead to instabilities, loss of compactness, or sensitivity with
respect to perturbations. These features complicate both the analytical
and numerical reconstruction of controls. Consequently, the design of
stable, consistent, and convergent numerical methods for control
reconstruction remains a challenging problem, especially in genuinely
nonlinear regimes.

This question appears to have been investigated only relatively
recently, starting with the initial contribution
\cite{MunchTrelat2022}, where a theoretical proof of a numerical scheme was provided without numerical results. Since then, it has begun to develop into an
emerging research direction, although it remains far from fully
established and is still comparatively unexplored. Constructive and
least-squares approaches have been proposed for semilinear wave and
heat equations, including distributed and boundary control problems, as
well as space--time methods for the approximation of controls; see
\cite{BhandariLemoineMunch2023,LemoineMarinGayteMunch2021,
LemoineMunch2023,BottoisLemoineMunch2023,Munch2023}. These
contributions highlight both the difficulty and the importance of
obtaining accurate numerical approximations of controls in nonlinear
PDE settings. It is also worth noting that classical numerical methods
for PDEs may face increasing computational challenges in
high-dimensional problems, a phenomenon commonly referred to as the
curse of dimensionality. This provides further motivation for
developing mesh-free or sampling-based methods for high-dimensional
PDE control problems.

Physics-informed neural networks (PINNs) are neural-network-based
methods for solving differential equations by incorporating the
governing equations directly into the training loss. Introduced in
\cite{Raissi2017PartI,Raissi2017PartII,Raissi2019}, PINNs approximate
the solution by a neural network trained with data, boundary
conditions, and PDE residuals computed by automatic differentiation.
They have become a central tool in scientific machine learning, with
applications to forward and inverse problems, particularly in
data-sparse settings \cite{Raissi2019}. From a computational
perspective, substantial effort has been devoted to improving the
robustness and accuracy of PINNs, especially in connection with
optimization difficulties, stiffness, and loss imbalance
\cite{WangTengPerdikaris2021}. Important developments include
conservative formulations, domain decomposition methods, adaptive
activation functions, adaptive weighting strategies, and
self-adaptive PINNs; see, for instance,
\cite{JagtapKharazmiKarniadakis2020,JagtapKawaguchiKarniadakis2020,
McClennyBragaNeto2023}. On the theoretical side, convergence and
generalization properties have begun to be understood for certain
classes of PDEs \cite{ShinDarbonKarniadakis2020,MishraMolinaro2023}.
We also refer to the recent surveys
\cite{RaissiAhmadiPerdikarisKarniadakis2024,DeRyckMishra2024} for
further developments and perspectives.

\subsection{Positioning with respect to existing PINN methodologies}
\label{Sec:PriorPINN}

The PINN literature has grown rapidly since
\cite{Raissi2017PartI,Raissi2017PartII,Raissi2019}, and many
methodological refinements have been proposed. Since the present work
introduces an adaptive PINN framework designed specifically for the
numerical reconstruction of controls for nonlinear PDEs, it is useful
to position our contribution within the broader landscape of PINN
variants. We organize the discussion into eight categories. In each
case, we explain both the principal mechanism of the existing method
and the aspects of the control reconstruction problem that remain
unaddressed.

\paragraph{(A) Loss-balancing methods at the level of whole loss terms.}
A standard PINN loss is typically written as a weighted sum
\[
\mathcal L_{\mathrm{PINN}}
=
\mathcal L_{\mathrm{PDE}}
+
\lambda_{\mathrm{bd}}\mathcal L_{\mathrm{bd}}
+
\lambda_{\mathrm{ic}}\mathcal L_{\mathrm{ic}}
+\cdots .
\]
The different components of this loss may have very different
magnitudes and gradient scales during training, leading to severe
imbalance. This issue is analyzed through gradient-flow pathologies in
\cite{WangTengPerdikaris2021} and from a neural tangent kernel
perspective in \cite{WangYuPerdikaris2022_NTKPINN}. This line of work
also proposes dynamic update rules for the scalar coefficients
\(\lambda_\bullet\). The inverse Dirichlet weighting method of
\cite{MadduSturmMuellerSbalzarini2022_InverseDirichlet} provides an
alternative rebalancing rule based on residual variance. These methods
improve the scaling among different loss components, but they operate
at the level of whole loss terms. They do not distinguish the
individual operator components inside the PDE residual itself, such as
the time derivative, the diffusion operator, the nonlinear response,
or the control term. Nor do they separately treat the control
constraint that defines the controllability problem.

\paragraph{(B) Self-adaptive PINNs with pointwise weights.}
A second family attaches trainable per-collocation-point weights to
the residual, replacing
\[
\sum_j |R[u_\theta](t_j,x_j)|^2
\]
by
\[
\sum_j \lambda_j^2 |R[u_\theta](t_j,x_j)|^2,
\]
where the weights \(\lambda_j\) are updated to emphasize
ill-resolved points \cite{McClennyBragaNeto2023}. This creates a soft
attention mechanism over the collocation points and improves local
accuracy where the residual is large. Such adaptivity, however, is
attached to points or samples rather than to the differential
operator. It does not identify which component of the PDE residual,
for example the time derivative, diffusion term, nonlinear term, or
control term, is responsible for the local error. It is also not
specifically designed for reconstructing controls that enter the
equation through a particular structural mechanism.

\paragraph{(C) Residual-based adaptive sampling.}
A third family modifies neither the residual norm nor the loss
weights, but instead changes the distribution of collocation points.
Residual-adaptive refinement (RAR) adds collocation points where the
residual is large; the family of RAR, RAR-D, and RAD strategies has
been systematized in
\cite{WuZhuTan2023_AdaptiveSamplingPINN} and is implemented in the
DeepXDE framework \cite{LuMengMaoKarniadakis2021_DeepXDE}. Related
ideas include retain-and-resample (\(R^3\)) sampling
\cite{DawBuWangPerdikarisKarniadakis2023_R3}. These methods improve
the resolution of localized features by changing where the residual
is evaluated. However, the residual itself remains a single scalar
quantity with no operator-level decomposition, and the method is
agnostic to the mechanism through which the control enters the PDE.

\paragraph{(D) Weighted residual and variational PINNs.}
A fourth family replaces \(R[u_\theta]\) by
\(\phi(t,x)R[u_\theta]\), where \(\phi\) is a suitable test or weight
function; equivalently, the residual is measured in a weighted norm.
Variational PINNs (VPINNs) and their \(hp\)-adaptive versions
(hp-VPINNs) test the residual against piecewise polynomial spaces
\cite{KharazmiZhangKarniadakis2019_VPINN,
KharazmiZhangKarniadakis2021_hpVPINN}. Gradient-enhanced PINNs
(gPINNs) \cite{YuLuMengKarniadakis2022_gPINN} add gradients of the
residual to the loss. In these approaches, the weight, test function,
or derivative term acts on the residual as a whole. It does not
separately weight the components
\[
\partial_t u_\theta,\qquad
\partial_{x_i x_i}u_\theta,\qquad
v(u_\theta),\qquad
f_{\bar\theta}\mathbf 1_\omega,
\qquad\text{or}\qquad
f_{\bar\theta}u_\theta.
\]
This distinction is essential in the control setting, because the
unknown control is identified through the balance among these
individual components.

\paragraph{(E) Causal and curriculum-style PINNs.}
For time-dependent PDEs, several works improve the temporal structure
of training. Causal PINNs reweight the residual at later times so that
earlier times must be resolved first
\cite{WangSankaranPerdikaris2024_CausalPINN}, while time-marching and
sequence-to-sequence formulations train the network on growing time
windows. These methods address temporal causality and improve
training stability in evolutionary problems. However, they do not
distinguish among operator components at a fixed point in space-time,
and they have not been formulated specifically for the reconstruction
of controls.

\paragraph{(F) Domain decomposition methods.}
Conservative PINNs (cPINNs) \cite{JagtapKharazmiKarniadakis2020},
extended PINNs (XPINNs) \cite{JagtapKarniadakis2020_XPINN},
finite-basis PINNs (FBPINNs)
\cite{MoseleyMarkhamNissenMeyer2023_FBPINN}, and parallel PINN
variants split the space-time domain into subdomains, each with its
own neural representation. These methods are powerful for multiscale
problems, but they primarily address the geometry and decomposition of
the training domain rather than the structure of the PDE residual.
They do not provide an operator-decomposed adaptive mechanism for
control reconstruction.

\paragraph{(G) Architecture-level adaptivity and hard constraints.}
Locally adaptive activation functions (LAAF)
\cite{JagtapKawaguchiKarniadakis2020} and adaptive activation slopes
modify the neural representation itself. Hard-constrained PINNs
(hPINNs) \cite{LuPestourieYaoWangVerdugoJohnson2021_hPINN} embed
constraints directly into the network architecture, while
augmented-Lagrangian PINNs (AL-PINNs)
\cite{SonChoHwang2023_ALPINN} treat boundary and initial conditions
as constraints rather than penalty terms. Physics-constrained neural
networks (PCNNs), exact boundary lifting, and related formulations
share a similar philosophy. These methods are effective for enforcing
constraints, but their primary objective is not the numerical
reconstruction of controls, and they do not address the
operator-component imbalance that arises when an unknown control is
identified through a multiplicative or localized term.

\paragraph{(H) Other specialized variants.}
Bayesian PINNs (B-PINNs) \cite{YangMengKarniadakis2021_BPINN}
introduce uncertainty quantification; fractional PINNs (fPINNs)
\cite{PangLuKarniadakis2019_fPINN} handle fractional derivatives; and
PINNs combined with Kolmogorov--Arnold networks (PIKANs)
\cite{ToscanoOommenVargheseZouAhmadiKarniadakis2024_PIKAN} explore
alternative neural representations. These contributions are important
and complementary to ours, but they do not directly address the
control reconstruction problem considered in this paper.

\medskip

To the best of our knowledge, none of the adaptive PINN methodologies
described above has been systematically developed for the numerical
reconstruction of controls for nonlinear PDEs,
with internal or bilinear controls, treated here.

\subsection{The control reconstruction setting and the WeightedPINN
formulation}
\label{Sec:WPDescription}

The control setting presents a different and more delicate difficulty
from that of a forward PDE problem. In a forward problem, the primary
objective is to approximate the state \(u\). In a control
reconstruction problem, by contrast, both the state and the control
are unknown, and the control is identified only indirectly through the
state equation, the boundary and initial conditions, and the terminal
target. For example, in the internal control setting one has
\[
\partial_t u-\Delta u+v(u)=f\mathbf 1_\omega,
\]
whereas in the bilinear control setting one has
\[
\partial_t u-\Delta u+v(u)=fu.
\]
Thus, the control enters through a specific structural mechanism. In
the bilinear case, the control appears through the product \(fu\), so
errors in the state and errors in the control interact
multiplicatively. Consequently, a small averaged PDE residual does not
necessarily imply that the recovered control is accurate. Likewise, an
improved approximation of the state alone may still leave the control
poorly reconstructed.

Although PINN-based methodologies have recently been applied to
optimal deterministic control problems
\cite{ZhangLiuAllaDarbonKarniadakis2026,YongLuoSun2024,
GarciaCerveraKesslerPeriago2023,AllaBertagliaCalzola2025}, as well as
to stochastic control settings
\cite{BensoussanNguyenTranTu2026,BensoussanLiNguyenTranYamZhou2022},
our objective here is different. We develop a new PINN-based approach,
which we call \emph{WeightedPINNs}; see
Subsection~\ref{Sec:Illu} for an illustrative explanation. The method
is tailored to the computation of reliable and accurate numerical
approximations for the control reconstruction problem initiated in
\cite{MunchTrelat2022}.

The central idea is to introduce adaptive weights directly into the
PINN formulation at the level of the controlled PDE itself. Instead of
the standard residual
\[
\partial_t u_\theta
-\Delta u_\theta
+v(u_\theta)
-f_{\bar\theta}\mathbf 1_\omega,
\]
we consider weighted residuals of the form
\[
\phi_{\theta^1}\partial_t u_\theta
-\sum_{i=1}^d \phi_{\bar\theta^i}\partial_{x_i x_i}u_\theta
+\phi_{\theta^2} v(u_\theta)
-\phi_{\theta^3} f_{\bar\theta}\mathbf 1_\omega .
\]
For bilinear control, the control component is instead weighted as
\[
-\phi_{\theta^3} f_{\bar\theta}u_\theta .
\]
Thus, the time derivative, the directional diffusion terms, the
nonlinear response, and the control term are weighted independently.
The same principle is applied to the boundary, initial, and terminal
constraints. The resulting method is therefore not merely a scalar
reweighting of a PINN loss. It is an operator-decomposed and
control-aware adaptive residual method.

\subsection{Structural features of WeightedPINN}
\label{Sec:WPFeatures}

The position of WeightedPINN relative to existing PINN methodologies
has been discussed in Subsection~\ref{Sec:PriorPINN}. We now summarize
the structural features that, taken together, define the proposed
framework and justify its use for control reconstruction.

\medskip
\noindent
\textit{(i) Operator-level adaptivity.}
In a standard PINN formulation, the PDE residual is treated as a
single quantity, so that the time derivative, the diffusion operator,
the nonlinear term, and the control term are all penalized through the
same residual norm. This can lead to imbalance during training,
particularly in nonlinear or stiff regimes. WeightedPINN introduces
space--time-dependent weights at the level of each operator component,
so that the differential, nonlinear, and control contributions can be
rebalanced individually during training.

\medskip
\noindent
\textit{(ii) Control-sensitive residual metric.}
In control reconstruction, the unknown control is not observed
directly. It is identified through the state equation together with
the initial, boundary, and terminal constraints. Therefore, a small
averaged PDE residual does not necessarily imply an accurate
reconstructed control. This issue is especially delicate in bilinear
control problems, where the control appears through the product
\(f_{\bar\theta}u_\theta\) and where errors in the state and errors in
the control interact multiplicatively. WeightedPINN addresses this
difficulty by assigning an independent adaptive weight to the control
term, making the loss more sensitive to the part of the residual
through which the control is identified.

\medskip
\noindent
\textit{(iii) Directional and space--time weighting.}
The Laplacian is decomposed into its directional components
\(\partial_{x_i x_i}u_\theta\), and each component is weighted
independently. This produces an anisotropic residual metric that
distinguishes errors associated with different spatial directions.
Because the weights are also functions of space and time, the method
can emphasize regions where the approximation is less accurate or
where the control is more difficult to recover, for instance near the
control region, near transition layers, or in portions of the domain
where the state is small and control identification becomes
ill-conditioned.

\medskip
\noindent
\textit{(iv) Penalized min--max formulation consistent with the
original PINN.}
The state and control networks minimize the weighted residuals, while
the weight networks amplify poorly resolved components of the equation
and of the constraints. A quadratic penalty keeps the weights close to
\(1\), preventing the learned weighted operator from drifting too far
from the original PDE. In particular, when all weights are equal to
\(1\), the WeightedPINN loss reduces exactly to the standard PINN
loss. Thus, the standard PINN is contained as the limiting case in
which the penalty dominates, and the proposed method may be viewed as
a controlled adaptive perturbation of the standard PINN formulation.

\medskip
\noindent
\textit{(v) Structured treatment of control constraints.}
The weighted formulation is also applied to the boundary, initial, and
terminal conditions. This is essential in controllability problems,
because the terminal condition is not auxiliary data; it is the target
condition that defines the control problem. A neural pair
\((u_\theta,f_{\bar\theta})\) that satisfies the PDE residual but
fails to reach the prescribed terminal state cannot be regarded as a
reliable reconstructed control. By weighting both the dynamical
residual and the control constraints in a structured manner, the
method enforces the governing equation and the target condition more
effectively.

\medskip

Overall, the main contribution of this work is not simply the
introduction of adaptive weights into PINNs. Rather, we introduce an
operator-decomposed and control-aware WeightedPINN framework for
nonlinear PDE control reconstruction. The method is new at the
methodological level because the adaptive weights act on individual
differential, nonlinear, control, and constraint components. It is
also new in the control setting, because previous adaptive PINN
methodologies have not been systematically developed for the numerical
reconstruction of controls for nonlinear PDEs of the type considered
here.

The WeightedPINN method is introduced in
Section~\ref{sec:ConandPINN}, and its theoretical analysis is presented
in Section~\ref{Sec:Theory}.  The numerical results in
Section~\ref{Sec:Numerical} indicate that WeightedPINN improves over
the standard PINN baseline on the high-dimensional control benchmarks
considered in this work.  In dimension \(d=10\), the method produces
smaller residual-based testing errors for the internal and bilinear
control problems governed by both heat and wave equations.  The
improvement is most pronounced in the equation residual, where the
adaptive weighting mechanism frequently reduces the PDE error
substantially, while keeping the boundary and endpoint errors
comparable or smaller.

The lower-dimensional manufactured-solution benchmarks in
Section~\ref{Sec:NumericalOtherMethodFull} further show that
WeightedPINN attains the smallest mean reconstruction errors for the
state and the control among the methods tested.  The numerical evidence suggests that a principal advantage of
WeightedPINN, in the benchmarks considered here, appears in regimes
where the residual contains componentwise imbalances, such as singular
perturbations, anisotropic or variable-coefficient operators, and
control-identification mechanisms in which the unknown control enters
through a localized or multiplicative term.  This is consistent with the design of the method: different
components of the PDE residual are weighted separately, allowing the
training procedure to respond to componentwise residual imbalances
that are not explicitly treated by scalar loss reweighting, adaptive
sampling, or causal weighting.

	\section*{Acknowledgments}
	The authors would like to thank Emmanuel Tr\'elat for fruitful discussions on the topic.

	\section{Control problems, PINNs and WeightedPINNs}\label{sec:ConandPINN}
	
	We first recall the definition of a feedforward neural network mapping $\mathbb{R}^d$ to $\mathbb{R}$ with width $\mathcal{W}$ and depth $\mathcal{L}$. The width structure of the network is specified by a sequence $(\mathcal{W}_i)_{i=0}^{\mathcal{L}+1}$ such that $\mathcal{W}_0 = d$, $\mathcal{W}_{\mathcal{L}+1} = 1$, and \(\mathcal W := \max_{1\le i\le\mathcal L}\mathcal W_i\).

	Given parameters $\theta = (w_0, b_0, \dots, w_{\mathcal{L}}, b_{\mathcal{L}})$, where $w_i \in \mathbb{R}^{\mathcal{W}_{i+1} \times \mathcal{W}_i}$ and $b_i \in \mathbb{R}^{\mathcal{W}_{i+1}}$, we define the affine transformations
	\begin{align*}
		\ell_i(x) = w_i x + b_i, \quad i = 0, \dots, \mathcal{L}.
	\end{align*}
	A neural network $F_\theta$ with activation function $\sigma : \mathbb{R} \to \mathbb{R}$ is defined by
	\begin{align*}
		F_{\theta}(x) := \ell_{\mathcal{L}} \circ \sigma_{\mathcal{L}} \circ \ell_{\mathcal L-1}\circ \cdots \circ \sigma_{1} \circ \ell_0(x),
	\end{align*}
	where, for each $i = 1, \dots, \mathcal{L}$, the map $\sigma_i : \mathbb{R}^{\mathcal{W}_i} \to \mathbb{R}^{\mathcal{W}_i}$ denotes the componentwise application of $\sigma$, i.e.,
	\begin{align*}
		\sigma_{i}\big((x_1, \dots, x_{\mathcal{W}_i})^\top\big)
		= \big(\sigma(x_1), \dots, \sigma(x_{\mathcal{W}_i})\big)^\top.
	\end{align*}
	
	Suppose we aim to solve the partial differential equation (PDE)
	\begin{equation}\label{eq:general_PDE}
		\mathcal{D}u(x) = f(x), \quad \text{in } Q,
		\qquad 
		\mathcal{B}u(x) = g(x), \quad \text{on } \Gamma,
	\end{equation}
	where $x \in Q$ denotes the spatial variable, and may also include a temporal component $t$ when appropriate. 
	Here, $\mathcal{D}u = f$ represents the governing PDE, and $\mathcal{B}u = g$ represents the associated boundary and/or initial conditions. The sets $Q$ and $\Gamma$ denote the corresponding domain and boundary, respectively.
	
	In a standard PINN approach, we seek a neural network $u(x;\theta)$ that approximates the solution $u$ of \eqref{eq:general_PDE}. 
	The residuals associated with the PDE and the boundary/initial conditions are defined by
	\begin{equation*}
		\mathcal{R}_{Q}(u(x;\theta)) := \mathcal{D}u(x;\theta) - f(x), 
		\qquad 
		\mathcal{R}_{\Gamma}(u(x;\theta)) := \mathcal{B}u(x;\theta) - g(x).
	\end{equation*}
	
	The PDE can then be solved by seeking parameters $\theta$ that minimize the residual errors \cite{walton2022deep}:
	\begin{equation*} 
		\min_{\theta} \left\|\mathcal{R}_{Q}(u(x;\theta))\right\|_{Q}^{2} 
		+ \lambda \left\|\mathcal{R}_{\Gamma}(u(x;\theta))\right\|_{\Gamma}^{2},
	\end{equation*}
	where $\|\cdot\|$ typically denotes the $L^{2}$-norm, and $\lambda > 0$ is a weighting parameter. For example, this minimization problem can be written as
	\begin{equation}\label{StandardNET}
		\min_{\theta} \mathbb{E}_{x \in Q} \left[ \left|\mathcal{D}u(x;\theta) - f(x)\right|^{2} \right] 
		+ \lambda \, \mathbb{E}_{x \in \Gamma} \left[ \left|\mathcal{B}u(x;\theta) - g(x)\right|^{2} \right].
	\end{equation}
	
	In practice, the $L^2$-norm is often used for simplicity, although the PDE solution may require higher regularity. 
	
	Solving the minimization problem \eqref{StandardNET} yields the standard Physics-Informed Neural Network (PINN) formulation.

	Below, we consider control problems for semilinear heat and wave equations to illustrate our method. First, we formally state the control problems and then apply the PINN method to approximate the associated control. Next, our new WeightedPINN method introduces adaptability to the learning process.
	We consider both the internal control for the heat and wave equations and the bilinear control for these equations. We introduce the notation \(\mathcal{R}_Q^{\mathcal I},\mathcal{R}_\Gamma^{\mathcal I}\), with \(\mathcal{I}\in \{1,2,3,4\}\), to distinguish residuals corresponding to different PDE and control settings. Similarly, we denote by \(\bar{\mathcal{R}}_Q^{\mathcal I},\bar{\mathcal{R}}_\Gamma^{\mathcal I}\) for the residuals in WeightedPINN settings. We refer to Section \ref{Sec:internalsettings} and Section \ref{Sec:bilinearsettings} for the detailed definitions.

	\subsection{Internal control problems for heat and wave equations in high dimensions}~~\label{Sec:internalsettings}

	Let $\Omega \subset \mathbb{R}^d$ be a bounded domain with smooth boundary, let $\omega \subset \Omega$ be an open subset, and let $T>0$. We define
	\[
	Q_T := (0,T) \times \Omega, \quad
	q_T := (0,T) \times \omega, \quad
	\Sigma_T := (0,T) \times \partial\Omega,
	\]
	and
	\[
	\Gamma := \Sigma_T \cup (\{0,T\} \times \Omega).
	\]
	We denote by $\mathbf 1_\omega$ the characteristic function of $\omega$.

	For a given globally Lipschitz function $v:\mathbb{R}\to\mathbb{R}$, we consider the semilinear heat equation
	\begin{align}\label{Heat1}
		\partial_t u - \Delta u + v(u) &= f \mathbf 1_\omega \quad \text{in } Q_T, \nonumber\\
		u &= 0 \quad \text{on } \Sigma_T, \\
		u(0) &= u_0 \quad \text{in } \Omega, \nonumber
	\end{align}
	where $u_0 \in L^2(\Omega)$.
	
	Equation \eqref{Heat1} is said to be exactly controllable at time $T$ if, for any $u_0 \in L^2(\Omega)$ and $z_0 \in L^2(\Omega)$, there exists a control $f \in L^2(q_T)$ such that the corresponding solution satisfies $u(T)=z_0$.
	
	The existence of such controls is not guaranteed in general; see \cite{FernandezZuazua2000,fernandez-cara_guerrero_2006}. In this work, we assume that a control exists for \eqref{Heat1} for given initial and terminal conditions.
	
	\begin{Remark}
		In \cite{FernandezZuazua2000}, controllability is established with controls \(f \in L^\infty(q_T)\), whereas in \cite{fernandez-cara_guerrero_2006}, the control is taken in \(L^2(q_T)\). In Theorem \ref{thm:errorHeat}, we need higher regularity and adopt the setting \(f\in L^\infty(q_T)\). In Theorem \ref{thm:error_estHeat}, we only need \(f\in L^2(q_T)\).
	\end{Remark}
	
	We also consider the semilinear wave equation
	\begin{align}\label{Wave1}
		\partial_{tt} u - \Delta u + v(u) &= f \mathbf 1_\omega \quad \text{in } Q_T, \nonumber\\
		u &= 0 \quad \text{on } \Sigma_T, \\
		(u(0), \partial_t u(0)) &= (u_0,u_1) \quad \text{in } \Omega, \nonumber
	\end{align}
	where \(v:\mathbb R \to \mathbb R\) is globally Lipschitz and $(u_0,u_1)\in H^1(\Omega)\times L^2(\Omega)$.
	
	The wave equation \eqref{Wave1} is said to be exactly controllable at time $T$ if, for any initial data $(u_0,u_1)\in H^1(\Omega)\times L^2(\Omega)$ and target data $(z_0,z_1)\in H^1(\Omega)\times L^2(\Omega)$, there exists a control $f \in L^2(q_T)$ such that \((u(T), \partial_t u(T)) = (z_0,z_1).\)
	
	We assume the existence of such controls for \eqref{Wave1} under given initial and terminal conditions. We refer to \cite{zuazua2024exactcontrollabilitystabilizationwave,fu_yong_zhang_2007} for relevant conditions. Specifically, we discuss the conditions further in Remark \ref{rmk:con_wave}.
	
	\begin{itemize}
		\item \textbf{First approach: standard PINN method}

Hereafter, let \(u_\theta\) and \(f_{\bar{\theta}}\) denote neural networks with parameters \(\theta\) and \(\bar{\theta}\) that approximate, respectively, the solution and the control for the model equation.

		\textbf{Heat equation:} 
		We consider the control problem \eqref{Heat1} with initial condition \(u_0\) and terminal condition \(z_0\). 
		First, we recall the definition of $H^{1/2}$-norm on \(\Sigma_T\), which is
		\begin{align*}
			\|\varphi\|^2_{H^{1/2}(\Sigma_T)}:=\|\varphi\|^2_{L^2(\Sigma_T)}+[\varphi]^2_{H^{1/2}(\Sigma_T)},
		\end{align*}
		where
		\begin{align*}
			[\varphi]^2_{H^{1/2}(\Sigma_T)}:=\int_{\Sigma_T}\int_{\Sigma_T}\frac{(\varphi(x)-\varphi(y))^2}{\|x-y\|^{d+1}}dS_x dS_y.
		\end{align*}
		The PINN method is associated with the minimization formulation
		\begin{align*}
			\inf_{\theta,\bar\theta}\ &\frac{1}{|Q_T|}\int_{Q_T}
			\left( \partial_t u_{\theta} - \Delta u_{\theta} + v(u_{\theta}) - f_{\bar\theta} \mathbf 1_\omega \right)^2 dx\,dt\\
			&+\frac{\lambda}{|\Gamma|}
			\Big(
				\|u_\theta\|_{H^{1/2}(\Sigma_T)}^2
			+\|u_\theta(0,\cdot)-u_0\|_{L^2(\Omega)}^2
			+\|u_\theta(T,\cdot)-z_0\|_{L^2(\Omega)}^2
			\Big),
		\end{align*}
		where \(\lambda>0\) is a given constant, $|Q_T|$ and $|\Gamma|$ denote the $(d+1)$- and $d$-dimensional measures of $Q_T$ and $\Gamma$, respectively.

		\begin{Remark}\label{rmk:H1bound}
			From a theoretical perspective, even though Equation \eqref{Heat1} has homogeneous boundary conditions, the approximation \(u_\theta\) is non-homogeneous. Thus, \(u_\theta\) must possess sufficient regularity to approximate the solution of the heat equation.

			In \cite[Chapter 4, Section 15.5]{LionsMagenesVol2}, when we need a weak solution in \(L^2(0,T;H^1(\Omega))\), the trace space for parabolic equation is at least
			\[L^2(0,T;H^{1/2}(\partial \Omega))\cap H^{1/4}(0,T;L^2(\partial \Omega)).\]

			For the ease of the formulation, we utilized the stronger \(H^{1/2}\)-norm on the boundary in the PINN formulation, this norm controls the anisotropic norm on the trace space. In practice, however, we adopt only the \(L^2\)-norm for simplicity.
		\end{Remark}

		We define the equation residual by
		\begin{align*}
			\|\mathcal{R}_{Q}^{1}(u_{\theta},f_{\bar\theta})\|_{Q}^2
			:=\frac{1}{|Q_T|}\int_{Q_T}
			\left( \partial_t u_{\theta} - \Delta u_{\theta} + v(u_{\theta}) - f_{\bar\theta} \mathbf 1_\omega \right)^2 dx\,dt,
		\end{align*}
		and define the boundary residual by
		\begin{align*}
			\|\mathcal{R}_{\Gamma}^{1}(u_{\theta})\|_{\Gamma}^2
			:=\frac{1}{|\Gamma|}
			\Big(
				\|u_\theta\|_{H^{1/2}(\Sigma_T)}^2
			+\|u_\theta(0,\cdot)-u_0\|_{L^2(\Omega)}^2
			+\|u_\theta(T,\cdot)-z_0\|_{L^2(\Omega)}^2
			\Big).
		\end{align*}
		
		The standard PINN formulation for the internal control on the heat equation, with \(\lambda > 0\), can be written in the following compact form:
		\begin{align*}
			\inf_{\theta,\bar{\theta}}
			\|\mathcal{R}_{Q}^{1}(u_{\theta},f_{\bar{\theta}})\|_{Q}^2
			+\lambda \|\mathcal{R}_{\Gamma}^{1}(u_{\theta})\|_{\Gamma}^2.
		\end{align*}

		\textbf{Wave equation:}
		We now consider the control problem in \eqref{Wave1} with initial condition \( (u_0,u_1)\) and terminal condition \( (z_0,z_1)\). The definition of $H^{3/2}$-norm on \(\Sigma_T\) is as follow
		\begin{align*}
			\|\varphi\|^2_{H^{3/2}(\Sigma_T)}:=\|\varphi\|^2_{L^2(\Sigma_T)}+\|\partial_t \varphi\|_{H^{1/2}(\Sigma_T)}^2+\sum_{i=1}^{d-1}\bigl\|\frac{\partial\varphi}{\partial \sigma_i}\bigr\|_{H^{1/2}(\Sigma_T)}^2,
		\end{align*}
		where \( (\sigma_1,\dots,\sigma_{d-1})\) is a smooth local coordinate of \(\partial \Omega\).

		The PINN method in this control problem is associated with the minimization formulation
		\begin{align*}
			\inf_{\theta,\bar\theta}\ &\frac{1}{|Q_T|}\int_{Q_T}
			\left( \partial_{tt} u_{\theta} - \Delta u_{\theta} + v(u_{\theta}) - f_{\bar\theta} \mathbf 1_\omega \right)^2 dx\,dt\\
			&+\frac{\lambda}{|\Gamma|}
			\Big(
				\|u_\theta\|_{H^{3/2}(\Sigma_T)}^2
			+\|(u_\theta(0,\cdot),\partial_t u_\theta(0,\cdot))-(u_0,u_1)\|_{H^1(\Omega)\times L^2(\Omega)}^2 \nonumber\\
			&\qquad\quad
			+\|(u_\theta(T,\cdot),\partial_t u_\theta(T,\cdot))-(z_0,z_1)\|_{H^1(\Omega)\times L^2(\Omega)}^2
			\Big),
		\end{align*}
		where \(\lambda>0\) is a given constant.

\begin{Remark}\label{rmk:H1initer}
	In the same spirit as Remark \ref{rmk:H1bound}, one would expect the boundary conditions has higher regularity than \(L^2\). For wave equation, we choose \(H^{3/2}\)-norm for boundary conditions.

	Furthermore, the initial and terminal data \(u_0\) and \(z_0\) belong to \(H^1(\Omega)\). Therefore, from a theoretical standpoint, the approximation should be carried out in the \(H^1\)-norm.

	In practice, however, we adopt the \(L^2\)-norm for boundary, initial and terminal conditions.
		\end{Remark}
		
		The equation residual is defined by
		\begin{align*}
			\|\mathcal{R}_{Q}^{2}(u_{\theta},f_{\bar\theta})\|_{Q}^2
			:=\frac{1}{|Q_T|}\int_{Q_T}
			\left( \partial_{tt} u_{\theta} - \Delta u_{\theta} + v(u_{\theta}) - f_{\bar\theta} \mathbf 1_\omega \right)^2 dx\,dt,
		\end{align*}
		and the boundary residual is defined by
		\begin{align*}
			\|\mathcal{R}_{\Gamma}^{2}(u_{\theta})\|_{\Gamma}^2
			&:=\frac{1}{|\Gamma|}
			\Big(
				\|u_\theta\|_{H^{3/2}(\Sigma_T)}^2
			+\|(u_\theta(0,\cdot),\partial_t u_\theta(0,\cdot))-(u_0,u_1)\|_{H^1(\Omega)\times L^2(\Omega)}^2 \nonumber\\
			&\qquad\quad
			+\|(u_\theta(T,\cdot),\partial_t u_\theta(T,\cdot))-(z_0,z_1)\|_{H^1(\Omega)\times L^2(\Omega)}^2
			\Big).
		\end{align*}

		The standard PINN formulation for the internal control on the wave equation, with \(\lambda > 0\), can be written in the following compact form:
		\begin{align*}
			\inf_{\theta,\bar\theta}
			\|\mathcal{R}_{Q}^{2}(u_{\theta},f_{\bar\theta})\|_{Q}^2
			+\lambda \|\mathcal{R}_{\Gamma}^{2}(u_{\theta})\|_{\Gamma}^2.
		\end{align*}

		\item \textbf{Second approach: WeightedPINN method}
		
		We introduce additional neural networks $\phi_{\theta}:\mathbb R^{d+1}\to \mathbb R$ that act as adaptive weights. We impose on the weight networks a uniform boundedness assumption, which will be specified later in \ref{con:weight}.

		\textbf{Heat equation:} 
		We again consider the control problem \eqref{Heat1} with initial condition \(u_0\) and terminal condition \(z_0\). The WeightedPINN method is associated with the following min--max formulation:
		\begin{align}\label{eq:lossheat}
			\inf_{\theta,\bar\theta}\,\sup_{(\theta^i)_{i=1}^{8},(\bar\theta^i)_{i=1}^d}\ 
			&\frac{1}{|Q_T|}\int_{Q_T}
			\Big[
			\phi_{\theta^1}\partial_t u_{\theta}
			- \sum_{i=1}^{d}\phi_{\bar{\theta}^i}\partial_{x_i x_i} u_{\theta}
			+ \phi_{\theta^{2}} v(u_{\theta})
			- \phi_{\theta^{3}} f_{\bar\theta} \mathbf 1_\omega
			\Big]^2 dx\,dt\\
			&+\frac{\lambda}{|\Gamma|}
			\Big(
				\|\phi_{\theta^4}u_\theta\|_{H^{1/2}(\Sigma_T)}^2
			+ \|\phi_{\theta^5}(0,\cdot)u_{\theta}(0,\cdot)-\phi_{\theta^6}(0,\cdot)u_0\|_{L^2(\Omega)}^2\nonumber\\
			&\qquad\quad
			+ \|\phi_{\theta^7}(T,\cdot)u_{\theta}(T,\cdot)-\phi_{\theta^{8}}(T,\cdot)z_0\|_{L^2(\Omega)}^2
			\Big)\nonumber\\
			&-\rho\Big(
			\sum_{i=1}^{8}\|\phi_{\theta^i}-1\|^2_{Q^i}
			+\frac{1}{|Q_T|}\sum_{i=1}^{d}\|\phi_{\bar{\theta}^i}-1\|^2_{L^2(Q_T)}	
			\Big)\nonumber,
		\end{align}
		where \(\lambda,\rho>0\) are given constants, \(\|\cdot\|^2_{Q^i}=\frac{\|\cdot\|^2_{L^2(Q_T)}}{|Q_T|}\) for \(i=1,2,3\), \(\|\cdot\|^2_{Q^4}=\frac{\|\cdot\|^2_{L^2(\Sigma_T)}}{|\Gamma|}\), \(\|\phi\|^2_{Q^i}=\frac{\|\phi(0,\cdot)\|^2_{L^2(\Omega)}}{|\Gamma|}\) for \(i=5,6\), \(\|\phi\|^2_{Q^i}=\frac{\|\phi(T,\cdot)\|^2_{L^2(\Omega)}}{|\Gamma|}\) for \(i=7,8\).

		In the proposed formulation, the additional neural networks serve as adaptive weights, designed to maximize discrepancies in the equation and boundary residuals, thereby enhancing the effectiveness of the minimization. To regulate these weights, we introduce additional \(L^2\)-norm penalization terms. These terms ensure that the weights remain controlled and do not deviate significantly from the constant function \(1\).
		
		\begin{Remark}\label{rmk:weightwithtime}
			The weights \(\phi_{\theta^5}\) and \(\phi_{\theta^6}\) can be viewed as functions from \(\mathbb{R}^d\) to \(\mathbb{R}\), i.e., without explicit dependence on \(t\) at \(t=0\). For simplicity, we employ the same network architecture as for the other weight functions, namely neural networks defined on \(\mathbb{R}^{d+1}\). These networks are then restricted to \(t=0\).
			
			The weights \(\phi_{\theta^7}, \phi_{\theta^{8}}\) are treated analogously at \(t=T\).
		\end{Remark}

		As in the standard PINN framework, we adopt a compact notation for the WeightedPINN formulation by introducing weighted residuals. For readability, we omit their explicit dependence on the neural network parameters.
		
		We define the weighted residual for the heat equation by
		\begin{align}\label{eq:barRQ1}
			\|\bar{\mathcal{R}}_{Q}^{1}\|_{Q}^2
			:=\frac{1}{|Q_T|}\int_{Q_T}
			\Big[
			\phi_{\theta^1}\partial_t u_{\theta}
			- \sum_{i=1}^{d}\phi_{\bar{\theta}^i}\partial_{x_i x_i} u_{\theta}
			+ \phi_{\theta^{2}} v(u_{\theta})
			- \phi_{\theta^{3}} f_{\bar\theta} \mathbf 1_\omega
			\Big]^2 dx\,dt.
		\end{align}
		
		The weighted boundary residual is
		\begin{align}\label{eq:barRG1}
			\|\bar{\mathcal{R}}_{\Gamma}^{1}\|_{\Gamma}^2
			&:=\frac{1}{|\Gamma|}
			\Big(
				\|\phi_{\theta^4}u_\theta\|_{H^{1/2}(\Sigma_T)}^2
			+ \|\phi_{\theta^5}(0,\cdot)u_{\theta}(0,\cdot)-\phi_{\theta^6}(0,\cdot)u_0\|_{L^2(\Omega)}^2\\
			&\qquad\quad
			+ \|\phi_{\theta^7}(T,\cdot)u_{\theta}(T,\cdot)-\phi_{\theta^{8}}(T,\cdot)z_0\|_{L^2(\Omega)}^2
			\Big).\nonumber
		\end{align}
		
		In WeightedPINN approach, we also define the auxiliary neural residual by
		\begin{align}\label{eq:RNN1}
			\|\mathcal{R}_{NN}^{1}\|_{NN}^2
			:=\sum_{i=1}^{8}\|\phi_{\theta^i}-1\|^2_{Q^i}
			+\frac{1}{|Q_T|}\sum_{i=1}^{d}\|\phi_{\bar{\theta}^i}-1\|^2_{L^2(Q_T)}.
		\end{align}
		Here, we recall \(\|\cdot\|^2_{Q^i}=\frac{\|\cdot\|^2_{L^2(Q_T)}}{|Q_T|}\) for \(i=1,2,3\), \(\|\cdot\|^2_{Q^4}=\frac{\|\cdot\|^2_{L^2(\Sigma_T)}}{|\Gamma|}\), \(\|\phi\|^2_{Q^i}=\frac{\|\phi(0,\cdot)\|^2_{L^2(\Omega)}}{|\Gamma|}\) for \(i=5,6\), \(\|\phi\|^2_{Q^i}=\frac{\|\phi(T,\cdot)\|^2_{L^2(\Omega)}}{|\Gamma|}\) for \(i=7,8\). 
		
		The compact form for WeightedPINN formulation for the internal control on the heat equation, with \(\lambda,\rho>0\), is given by
		\begin{align*}
			\inf_{\theta,\bar\theta}\,\sup_{(\theta^i)_{i=1}^8,(\bar\theta^i)_{i=1}^d}
			\;\|\bar{\mathcal{R}}_{Q}^{1}\|_{Q}^2
			+\lambda \|\bar{\mathcal{R}}_{\Gamma}^{1}\|_{\Gamma}^2
			-\rho \|\mathcal{R}_{NN}^{1}\|_{NN}^2.
		\end{align*}
		
		\textbf{Wave equation:} 
		We extend the idea of WeightedPINN to control problems of the wave equation. Let us consider \eqref{Wave1} with initial condition \( (u_0,u_1)\) and \( (z_0,z_1)\).
		The WeightedPINN is associated with the min--max formulation
		\begin{align}\label{eq:losswave}
			\inf_{\theta,\bar\theta}\,\sup_{(\theta^i)_{i=1}^{12},(\bar\theta^i)_{i=1}^d,(\tilde\theta^i)_{i=1}^d,(\hat\theta^i)_{i=1}^{2d},(\check\theta^i)_{i=1}^{2d}}\ 
			&\frac{1}{|Q_T|}\int_{Q_T}
			\Big[
			\phi_{\theta^1}\partial_{tt}u_{\theta}
			- \sum_{i=1}^{d}\phi_{\bar{\theta}^i}\partial_{x_i x_i} u_{\theta}
			+ \phi_{\theta^{2}} v(u_{\theta})
			- \phi_{\theta^{3}} f_{\bar\theta} \mathbf 1_\omega
			\Big]^2 dx\,dt\\
			&+\frac{\lambda}{|\Gamma|}
			\Big(
			\|\phi_{\theta^4}u_\theta\|_{L^2(\Sigma_T)}^2
			+\|\phi_{\tilde\theta^1}\partial_t u_\theta\|^2_{H^{1/2}(\Sigma_T)}
			+\sum_{i=2}^{d}\bigl\|\phi_{\tilde\theta^i}\frac{\partial u_\theta}{\partial \sigma_{i-1}}\bigr\|^2_{H^{1/2}(\Sigma_T)}\nonumber\\
			&\qquad\quad
			+\|\phi_{\theta^5}(0,\cdot)u_{\theta}(0,\cdot)-\phi_{\theta^6}(0,\cdot)u_0\|^2_{L^2(\Omega)}\nonumber\\
			&\qquad\quad
			+\|\phi_{\theta^{9}}(0,\cdot)\partial_t u_{\theta}(0,\cdot)-\phi_{\theta^{10}}(0,\cdot)u_1\|^2_{L^2(\Omega)}\nonumber\\
			&\qquad\quad
			+\|\phi_{\theta^7}(T,\cdot)u_{\theta}(T,\cdot)-\phi_{\theta^{8}}(T,\cdot)z_0\|^2_{L^2(\Omega)}\nonumber\\
			&\qquad\quad+\|\phi_{\theta^{11}}(T,\cdot)\partial_t u_{\theta}(T,\cdot)-\phi_{\theta^{12}}(T,\cdot)z_1\|^2_{L^2(\Omega)}\nonumber\\
			&\qquad\quad
			+\sum_{i=1}^{d}\|\phi_{\hat{\theta}^{2i-1}}(0,\cdot)\partial_{x_i}u_{\theta}(0,\cdot)-\phi_{\hat{\theta}^{2i}}(0,\cdot)\partial_{x_i}u_0\|^2_{L^2(\Omega)}\nonumber\\
			&\qquad\quad
			+\sum_{i=1}^{d}\|\phi_{\check{\theta}^{2i-1}}(T,\cdot)\partial_{x_i}u_{\theta}(T,\cdot)-\phi_{\check{\theta}^{2i}}(T,\cdot)\partial_{x_i}z_0\|^2_{L^2(\Omega)}\Big)\nonumber\\
			&-\rho\Big(
			\sum_{i=1}^{12}\|\phi_{\theta^i}-1\|^2_{Q^i}\\
			&\qquad\quad
			+\frac{1}{|Q_T|}\sum_{i=1}^{d}\|\phi_{\bar{\theta}^i}-1\|^2_{L^2(Q_T)}+\frac{1}{|\Gamma|}\sum_{i=1}^{d}\|\phi_{\tilde{\theta}^i}-1\|^2_{L^2(\Sigma_T)}\nonumber\\
			&\qquad\quad
			+\frac{1}{|\Gamma|}\sum_{i=1}^{2d}\|\phi_{\hat{\theta}^i}(0,\cdot)-1\|^2_{L^2(\Omega)}
			+\frac{1}{|\Gamma|}\sum_{i=1}^{2d}\|\phi_{\check{\theta}^i}(T,\cdot)-1\|^2_{L^2(\Omega)}\nonumber
			\Big),
		\end{align}
		where \(\lambda,\rho>0\) are given constants, \((\sigma_1,\dots,\sigma_{d-1})\) is a smooth local coordinate of \(\partial\Omega\), \(\|\cdot\|^2_{Q^i}=\frac{\|\cdot\|^2_{L^2(Q_T)}}{|Q_T|}\) for \(i=1,2,3\), \(\|\cdot\|^2_{Q^4}=\frac{\|\cdot\|^2_{L^2(\Sigma_T)}}{|\Gamma|}\), \(\|\phi\|^2_{Q^i}=\frac{\|\phi(0,\cdot)\|^2_{L^2(\Omega)}}{|\Gamma|}\) for \(i=5,6,9,10\), \(\|\phi\|^2_{Q^i}=\frac{\|\phi(T,\cdot)\|^2_{L^2(\Omega)}}{|\Gamma|}\) for \(i=7,8,11,12\). 
		
		Similar to the control problem for heat equation, as in Remark \ref{rmk:weightwithtime}, we take \(\phi_{\theta^5},\phi_{\theta^6},\phi_{\theta^{9}},\phi_{\theta^{10}}\) and \(\phi_{\hat\theta^i}\) to be neural networks from \(\mathbb R^{d+1}\) to \(\mathbb R\),  restricted to \(t = 0\). We treat  \(\phi_{\theta^7},\phi_{\theta^{8}},\phi_{\theta^{11}},\phi_{\theta^{12}}\) and \(\phi_{\check\theta^i}\) analogously, with restriction to \(t = T\).
		
		For the internal control on the wave equation, the weighted  residuals are defined by
		\begin{align}\label{eq:barRQ2}
			\|\bar{\mathcal{R}}_{Q}^{2}\|_{Q}^2
			:=\frac{1}{|Q_T|}\int_{Q_T}
			\Big[
			\phi_{\theta^1}\partial_{tt}u_{\theta}
			- \sum_{i=1}^{d}\phi_{\bar{\theta}^i}\partial_{x_i x_i} u_{\theta}
			+ \phi_{\theta^{2}} v(u_{\theta})
			- \phi_{\theta^{3}} f_{\bar\theta} \mathbf 1_\omega
			\Big]^2 dx\,dt,
		\end{align}
		and
		\begin{align}\label{eq:barRG2}
			\|\bar{\mathcal{R}}_{\Gamma}^2\|_{\Gamma}^{2}
			&:=\frac{1}{|\Gamma|}
			\Big(
			\|\phi_{\theta^4}u_\theta\|_{L^2(\Sigma_T)}^2
			+\|\phi_{\tilde\theta^0}\partial_t u_\theta\|^2_{H^{1/2}(\Sigma_T)}
			+\sum_{i=1}^{d}\|\phi_{\tilde\theta^i}\partial_{x_i}u_\theta\|^2_{H^{1/2}(\Sigma_T)}\\
			&\qquad\quad
			+\|\phi_{\theta^5}(0,\cdot)u_{\theta}(0,\cdot)-\phi_{\theta^6}(0,\cdot)u_0\|^2_{L^2(\Omega)}
			+\|\phi_{\theta^{9}}(0,\cdot)\partial_t u_{\theta}(0,\cdot)-\phi_{\theta^{10}}(0,\cdot)u_1\|^2_{L^2(\Omega)}\nonumber\\
			&\qquad\quad
			+\|\phi_{\theta^7}(T,\cdot)u_{\theta}(T,\cdot)-\phi_{\theta^{8}}(T,\cdot)z_0\|^2_{L^2(\Omega)}
			+\|\phi_{\theta^{11}}(T,\cdot)\partial_t u_{\theta}(T,\cdot)-\phi_{\theta^{12}}(T,\cdot)z_1\|^2_{L^2(\Omega)}\nonumber\\
			&\qquad\quad
			+\sum_{i=1}^{d}\|\phi_{\hat{\theta}^{2i-1}}(0,\cdot)\partial_{x_i}u_{\theta}(0,\cdot)-\phi_{\hat{\theta}^{2i}}(0,\cdot)\partial_{x_i}u_0\|^2_{L^2(\Omega)}\nonumber\\
			&\qquad\quad
			+\sum_{i=1}^{d}\|\phi_{\check{\theta}^{2i-1}}(T,\cdot)\partial_{x_i}u_{\theta}(T,\cdot)-\phi_{\check{\theta}^{2i}}(T,\cdot)\partial_{x_i}z_0\|^2_{L^2(\Omega)}\Big).\nonumber
		\end{align}
		
		We also define the neural residual by
		\begin{align}\label{eq:RNN2}
			\|\mathcal{R}_{NN}^{2}\|_{NN}^2
			&:=\sum_{i=1}^{12}\|\phi_{\theta^i}-1\|^2_{Q^i}
			+\frac{1}{|Q_T|}\sum_{i=1}^{d}\|\phi_{\bar{\theta}^i}-1\|^2_{L^2(Q_T)}
			+\frac{1}{|\Gamma|}\sum_{i=1}^{d}\|\phi_{\tilde{\theta}^i}-1\|^2_{L^2(\Sigma_T)}\\
			&\quad+\frac{1}{|\Gamma|}\sum_{i=1}^{2d}\|\phi_{\hat{\theta}^i}(0,\cdot)-1\|^2_{L^2(\Omega)}
			+\frac{1}{|\Gamma|}\sum_{i=1}^{2d}\|\phi_{\check{\theta}^i}(T,\cdot)-1\|^2_{L^2(\Omega)},\nonumber
		\end{align}
		where \(\|\cdot\|^2_{Q^i}=\frac{\|\cdot\|^2_{L^2(Q_T)}}{|Q_T|}\) for \(i=1,2,3\), \(\|\cdot\|^2_{Q^4}=\frac{\|\cdot\|^2_{L^2(\Sigma_T)}}{|\Gamma|}\), \(\|\phi\|^2_{Q^i}=\frac{\|\phi(0,\cdot)\|^2_{L^2(\Omega)}}{|\Gamma|}\) for \(i=5,6,9,10\), \(\|\phi\|^2_{Q^i}=\frac{\|\phi(T,\cdot)\|^2_{L^2(\Omega)}}{|\Gamma|}\) for \(i=7,8,11,12\).
		
		Finally, for given parameters \(\lambda,\rho>0\), the WeightedPINN approach for the wave equation has the compact form
		\begin{align*}
			\inf_{\theta,\bar{\theta}}\,
			\sup_{(\theta^i)_{i=1}^{12},(\bar\theta^i)_{i=1}^d,(\tilde\theta^i)_{i=1}^{d},(\hat\theta^i)_{i=1}^{2d},(\check\theta^i)_{i=1}^{2d}}
			\;\|\bar{\mathcal{R}}_Q^{2}\|_{Q}^2+\lambda\|\bar{\mathcal{R}}_{\Gamma}^{2}\|_{\Gamma}^2-\rho\|\mathcal{R}_{NN}^{2}\|_{NN}^2.
		\end{align*}

	\end{itemize}

	\subsection{Bilinear control problems for heat and wave equations in high dimensions}~~\label{Sec:bilinearsettings}
	
	We use the same notations for the spatial domains and boundaries as in the internal control setting. For completeness, we recall them below.
	
	Let $\Omega \subset \mathbb{R}^d$ be a bounded domain with smooth boundary, and let $T>0$. We define
	\[
	Q_T := (0,T) \times \Omega, \quad
	\Sigma_T := (0,T) \times \partial\Omega, \quad
	\Gamma := \Sigma_T \cup (\{0,T\} \times \Omega).
	\]
	
	Let \(v:\mathbb R\to\mathbb R\) be a globally Lipschitz function. We consider the heat equation with bilinear control:
	\begin{align}\label{Heat1b}
		\partial_t u - \Delta u + v(u) &= f u \quad \text{in } Q_T, \nonumber\\
		u &= 0 \quad \text{on } \Sigma_T, \\
		u(0) &= u_0 \quad \text{in } \Omega, \nonumber
	\end{align}
	
	We say that \eqref{Heat1b} is bilinearly controllable at time $T$ from $u_0 \in L^2(\Omega)$ to a target $z_0 \in L^2(\Omega)$ if there exists a control $f \in L^\infty(Q_T)$ such that the corresponding solution satisfies $u(T)=z_0$. Sufficient conditions for such controllability are given in \cite{Lin2006}.
	
	\begin{Remark}
		The case of homogeneous boundary conditions is known to lead to a loss of controllability for the bilinear heat equation in general; see \cite{Lin2006}. Therefore, we assume the existence of a control associated with the prescribed initial and terminal data \(u_0\) and \(z_0\).
	\end{Remark}
	
	We next consider the wave equation with the globally Lipschitz function \(v:\mathbb R \to \mathbb R\) and bilinear control:
	\begin{align}\label{Wave1b}
		\partial_{tt} u - \Delta u + v(u) &= f u \quad \text{in } Q_T, \nonumber\\
		u &= 0 \quad \text{on } \Sigma_T, \\
		(u(0), \partial_t u(0)) &= (u_0,u_1) \quad \text{in } \Omega. \nonumber
	\end{align}
	
	We say that \eqref{Wave1b} is bilinearly controllable at time $T$ from $(u_0,u_1) \in H^1(\Omega)\times L^2(\Omega)$ to $(z_0,z_1) \in H^1(\Omega)\times L^2(\Omega)$ if there exists a control $f \in L^\infty(Q_T)$ such that the solution satisfies \((u(T), \partial_t u(T)) = (z_0,z_1).\)
	Relevant results on bilinear controllability of the wave equation can be found in \cite{Beauchard2011Local,Ouzahra2013}.

	\begin{itemize}
		\item \textbf{First approach: standard PINN method}
		
		\textbf{Heat equation:} 
		The standard PINN formulation of bilinear control on the heat equation is given by
		\begin{align*}
			\inf_{\theta,\bar\theta}\ &\frac{1}{|Q_T|}\int_{Q_T}
			\left( \partial_t u_{\theta} - \Delta u_{\theta} + v(u_{\theta}) - f_{\bar\theta}  u_\theta \right)^2 dx\,dt\\
			&+\frac{\lambda}{|\Gamma|}
			\Big(
				\|u_\theta\|_{H^{1/2}(\Sigma_T)}^2
			+\|u_\theta(0,\cdot)-u_0\|_{L^2(\Omega)}^2
			+\|u_\theta(T,\cdot)-z_0\|_{L^2(\Omega)}^2
			\Big),
		\end{align*}
		where \(\lambda>0\) is a given constant.
		We recall that $|Q_T|$ and $|\Gamma|$ denote the $(d+1)$- and $d$-dimensional measures of $Q_T$ and $\Gamma$, respectively.
		
		The equation residual is given by
		\begin{align*}
			\|\mathcal{R}_{Q}^{3}(u_{\theta},f_{\bar\theta})\|_{Q}^2
			:=\frac{1}{|Q_T|}\int_{Q_T}
			\left( \partial_t u_{\theta} - \Delta u_{\theta} + v(u_\theta) - f_{\bar\theta} u_{\theta} \right)^2 dx\,dt,
		\end{align*}
		and the boundary residual is
		\begin{align*}
			\|\mathcal{R}_{\Gamma}^{3}(u_{\theta})\|_{\Gamma}^2
			:=\frac{1}{|\Gamma|}
			\Big(
				\|u_\theta\|_{H^{1/2}(\Sigma_T)}^2
			+\|u_\theta(0,\cdot)-u_0\|_{L^2(\Omega)}^2
			+\|u_\theta(T,\cdot)-z_0\|_{L^2(\Omega)}^2
			\Big).
		\end{align*}
		
		The standard training formulation for bilinear controls on the heat equation, with $\lambda>0$, is given in compact form by
		\begin{align*}
			\inf_{\theta,\bar\theta}
			\|\mathcal{R}_{Q}^{3}(u_{\theta},f_{\bar\theta})\|_{Q}^2
			+\lambda \|\mathcal{R}_{\Gamma}^{3}(u_{\theta})\|_{\Gamma}^2.
		\end{align*}

		\textbf{Wave equation:} 
		Similar to the heat equation, the PINN formulation for bilinear control on the wave equation is given by
		\begin{align*}
			\inf_{\theta,\bar\theta}\ &\frac{1}{|Q_T|}\int_{Q_T}
			\left( \partial_{tt} u_{\theta} - \Delta u_{\theta} + v(u_{\theta}) - f_{\bar\theta} u_\theta \right)^2 dx\,dt\\
			&+\frac{\lambda}{|\Gamma|}
			\Big(
				\|u_\theta\|_{H^{3/2}(\Sigma_T)}^2
			+\|(u_\theta(0,\cdot),\partial_t u_\theta(0,\cdot))-(u_0,u_1)\|_{H^1(\Omega)\times L^2(\Omega)}^2 \nonumber\\
			&\qquad\quad
			+\|(u_\theta(T,\cdot),\partial_t u_\theta(T,\cdot))-(z_0,z_1)\|_{H^1(\Omega)\times L^2(\Omega)}^2
			\Big),
		\end{align*}
		where \(\lambda>0\) is a given constant.
		
		The residuals are defined as follows:
		\begin{align*}
			\|\mathcal{R}_{Q}^{4}(u_{\theta},f_{\bar\theta})\|_{Q}^2
			&:=\frac{1}{|Q_T|}\int_{Q_T}
			\left( \partial_{tt} u_{\theta} - \Delta u_{\theta} + v(u_\theta) - f_{\bar\theta} u_{\theta} \right)^2 dx\,dt,
		\end{align*}
		and
		\begin{align*}
			\|\mathcal{R}_{\Gamma}^{4}(u_{\theta})\|_{\Gamma}^2
			&:=\frac{1}{|\Gamma|}
			\Big(
				\|u_\theta\|_{H^{3/2}(\Sigma_T)}^2
			+\|(u_\theta(0,\cdot),\partial_t u_\theta(0,\cdot))-(u_0,u_1)\|_{H^1(\Omega)\times L^2(\Omega)}^2 \\
			&\qquad\quad
			+\|(u_\theta(T,\cdot),\partial_t u_\theta(T,\cdot))-(z_0,z_1)\|_{H^1(\Omega)\times L^2(\Omega)}^2
			\Big).\nonumber
		\end{align*}
		
		The compact form of PINN formulation for bilinear control on the wave equation is given by
		\begin{align*}
			\inf_{\theta,\bar\theta}
			\|\mathcal{R}_{Q}^{4}(u_{\theta},f_{\bar\theta})\|_{Q}^2
			+\lambda \|\mathcal{R}_{\Gamma}^{4}(u_{\theta})\|_{\Gamma}^2.
		\end{align*}

		\item {\bf Second approach: WeightedPINN method}. 
		We further define a weighted formulation of the residual errors for the heat and wave equations in the context of bilinear control problems. Similar to internal control settings, for readability, we omit the dependence of the residuals on the parameters of the related neural networks.
		
		{\bf Heat equation.} 
		We consider the min--max formulation that defines the WeightedPINN for bilinear control in \eqref{Heat1b}:
		\begin{align*}
			\inf_{\theta,\bar\theta}\,\sup_{(\theta^i)_{i=1}^8,(\bar\theta^i)_{i=1}^d}\ 
			&\frac{1}{|Q_T|}\int_{Q_T}
			\Big[
			\phi_{\theta^1}\partial_t u_{\theta}
			- \sum_{i=1}^{d}\phi_{\bar{\theta}^i}\partial_{x_i x_i} u_{\theta}
			+ \phi_{\theta^{2}} v(u_{\theta})
			- \phi_{\theta^{3}} f_{\bar\theta} u_\theta
			\Big]^2 dx\,dt\\
			&+\frac{\lambda}{|\Gamma|}
			\Big(
				\|\phi_{\theta^4}u_\theta\|_{H^{1/2}(\Sigma_T)}^2
			+ \|\phi_{\theta^5}(0,\cdot)u_{\theta}(0,\cdot)-\phi_{\theta^6}(0,\cdot)u_0\|_{L^2(\Omega)}^2\\
			&\qquad\quad
			+ \|\phi_{\theta^7}(T,\cdot)u_{\theta}(T,\cdot)-\phi_{\theta^8}(T,\cdot)z_0\|_{L^2(\Omega)}^2
			\Big)\\
			&-\rho\Big(
			\sum_{i=1}^{8}\|\phi_{\theta^i}-1\|^2_{Q^i}
			+\frac{1}{|Q_T|}\sum_{i=1}^{d}\|\phi_{\bar{\theta}^i}-1\|^2_{L^2(Q_T)}
			\Big),
		\end{align*}
where \(\lambda,\rho>0\) are given constants, \(\|\cdot\|^2_{Q^i}=\frac{\|\cdot\|^2_{L^2(Q_T)}}{|Q_T|}\) for \(i=1,2,3\), \(\|\cdot\|^2_{Q^4}=\frac{\|\cdot\|^2_{L^2(\Sigma_T)}}{|\Gamma|}\), \(\|\phi\|^2_{Q^i}=\frac{\|\phi(0,\cdot)\|^2_{L^2(\Omega)}}{|\Gamma|}\) for \(i=5,6\), \(\|\phi\|^2_{Q^i}=\frac{\|\phi(T,\cdot)\|^2_{L^2(\Omega)}}{|\Gamma|}\) for \(i=7,8\).

		To shorten the formulation, we define
		\begin{align}\label{eq:barRQ3}
			\|\bar{\mathcal{R}}_Q^3\|_Q^2
			:=\frac{1}{|Q_T|}\int_{Q_T}\bigg[
			\phi_{\theta^1}\partial_{t}u_{\theta}
			- \sum_{i=1}^{d}\phi_{\bar{\theta}^{i}}\partial_{x_ix_i} u_{\theta}
			+ \phi_{\theta^2}v(u_\theta)
			- \phi_{\theta^{3}}f_{\bar\theta} u_{\theta}
			\bigg]^2\,dx\,dt.
		\end{align}
		
		Next, we define the weighted boundary residual:
		\begin{align}\label{eq:barRG3}
			\|\bar{\mathcal{R}}_{\Gamma}^3\|_{\Gamma}^2
			&:=\frac{1}{|\Gamma|}
			\Big(
				\|\phi_{\theta^4}u_\theta\|_{H^{1/2}(\Sigma_T)}^2
			+ \|\phi_{\theta^5}(0,\cdot)u_{\theta}(0,\cdot)-\phi_{\theta^6}(0,\cdot)u_0\|_{L^2(\Omega)}^2\\
			&\quad\quad
			+ \|\phi_{\theta^7}(T,\cdot)u_{\theta}(T,\cdot)-\phi_{\theta^8}(T,\cdot)z_0\|_{L^2(\Omega)}^2
			\Big).\nonumber
		\end{align}
		
		The auxiliary neural residual is
		\begin{align}\label{eq:RNN3}
			\|\mathcal{R}_{NN}^{3}\|_{NN}^2
			:=\sum_{i=1}^{8}\|\phi_{\theta^i}-1\|^2_{Q^i}
			+\frac{1}{|Q_T|}\sum_{i=1}^{d}\|\phi_{\bar{\theta}^i}-1\|^2_{L^2(Q_T)},
		\end{align}
		where \(\|\cdot\|^2_{Q^i}=\frac{\|\cdot\|^2_{L^2(Q_T)}}{|Q_T|}\) for \(i=1,2,3\), \(\|\cdot\|^2_{Q^4}=\frac{\|\cdot\|^2_{L^2(\Sigma_T)}}{|\Gamma|}\), \(\|\phi\|^2_{Q^i}=\frac{\|\phi(0,\cdot)\|^2_{L^2(\Omega)}}{|\Gamma|}\) for \(i=5,6\), \(\|\phi\|^2_{Q^i}=\frac{\|\phi(T,\cdot)\|^2_{L^2(\Omega)}}{|\Gamma|}\) for \(i=7,8\).
		
		Thus, the compact form of the WeightedPINN in this case is
		\begin{align*}
			\inf_{\theta,\bar{\theta}}\,
			\sup_{(\theta^i)_{i=1}^8,(\bar\theta^i)_{i=1}^d}
			\|\bar{\mathcal{R}}_Q^{3}\|_{Q}^2
			+\lambda\|\bar{\mathcal{R}}_{\Gamma}^{3}\|_{\Gamma}^2
			-\rho\|\mathcal{R}_{NN}^{3}\|_{NN}^2.
		\end{align*}
		
		{\bf Wave equation.} 
		We consider the min--max formulation that defines the WeightedPINN for bilinear control in \eqref{Wave1b}:
		\begin{align*}
			\inf_{\theta,\bar\theta}\,\sup_{(\theta^i)_{i=1}^{12},(\bar\theta^i)_{i=1}^d,(\tilde\theta^i)_{i=1}^d,(\hat\theta^i)_{i=1}^{2d},(\check\theta^i)_{i=1}^{2d}}\ 
			&\frac{1}{|Q_T|}\int_{Q_T}
			\Big[
			\phi_{\theta^1}\partial_{tt}u_{\theta}
			- \sum_{i=1}^{d}\phi_{\bar{\theta}^i}\partial_{x_i x_i} u_{\theta}
			+ \phi_{\theta^{2}} v(u_{\theta})
			- \phi_{\theta^{3}} f_{\bar\theta} u_\theta
			\Big]^2 dx\,dt\\
			&+\frac{\lambda}{|\Gamma|}
			\Big(
			\|\phi_{\theta^4}u_\theta\|_{L^2(\Sigma_T)}^2
			+ \|\phi_{\tilde\theta^1}\partial_t u_\theta\|_{H^{1/2}(\Sigma_T)}^2 + \sum_{i=2}^{d} \bigl\|\phi_{\tilde\theta^i}\frac{\partial u_\theta}{\partial \sigma_{i-1}}\bigr\|_{H^{1/2}(\Sigma_T)}^2\\
			&\qquad\quad
			+\|\phi_{\theta^5}(0,\cdot)u_{\theta}(0,\cdot)-\phi_{\theta^6}(0,\cdot)u_0\|^2_{L^2(\Omega)}\\
			&\qquad\quad
			+\|\phi_{\theta^{9}}(0,\cdot)\partial_t u_{\theta}(0,\cdot)-\phi_{\theta^{10}}(0,\cdot)u_1\|^2_{L^2(\Omega)}\nonumber\\
			&\qquad\quad
			+\|\phi_{\theta^7}(T,\cdot)u_{\theta}(T,\cdot)-\phi_{\theta^8}(T,\cdot)z_0\|^2_{L^2(\Omega)}\\
			&\qquad\quad
			+\|\phi_{\theta^{11}}(T,\cdot)\partial_t u_{\theta}(T,\cdot)-\phi_{\theta^{12}}(T,\cdot)z_1\|^2_{L^2(\Omega)}\nonumber\\
			&\qquad\quad
			+\sum_{i=1}^{d}\|\phi_{\hat{\theta}^{2i-1}}(0,\cdot)\partial_{x_i}u_{\theta}(0,\cdot)-\phi_{\hat{\theta}^{2i}}(0,\cdot)\partial_{x_i}u_0\|^2_{L^2(\Omega)}\nonumber\\
			&\qquad\quad
			+\sum_{i=1}^{d}\|\phi_{\check{\theta}^{2i-1}}(T,\cdot)\partial_{x_i}u_{\theta}(T,\cdot)-\phi_{\check{\theta}^{2i}}(T,\cdot)\partial_{x_i}z_0\|^2_{L^2(\Omega)}\Big)\nonumber\\
			&-\rho\Big(
			\sum_{i=1}^{12}\|\phi_{\theta^i}-1\|^2_{Q^i}\\
			&\qquad\quad
			+\frac{1}{|Q_T|}\sum_{i=1}^{d}\|\phi_{\bar{\theta}^i}-1\|^2_{L^2(Q_T)}
			+\frac{1}{|\Gamma|}\sum_{i=1}^{d}\|\phi_{\tilde{\theta}^i}-1\|^2_{L^2(\Sigma_T)}\\
			&\qquad\quad
			+\frac{1}{|\Gamma|}\sum_{i=1}^{2d}\|\phi_{\hat{\theta}^i}(0,\cdot)-1\|^2_{L^2(\Omega)}
			+\frac{1}{|\Gamma|}\sum_{i=1}^{2d}\|\phi_{\check{\theta}^i}(T,\cdot)-1\|^2_{L^2(\Omega)}\nonumber
			\Big),
		\end{align*}
		where \(\lambda,\rho>0\) are given constants, \( (\sigma_1,\dots,\sigma_{d-1}) \) is a smooth local coordinate of \(\partial\Omega\), \(\|\cdot\|^2_{Q^i}=\frac{\|\cdot\|^2_{L^2(Q_T)}}{|Q_T|}\) for \(i=1,2,3\), \(\|\cdot\|^2_{Q^4}=\frac{\|\cdot\|^2_{L^2(\Sigma_T)}}{|\Gamma|}\), \(\|\phi\|^2_{Q^i}=\frac{\|\phi(0,\cdot)\|^2_{L^2(\Omega)}}{|\Gamma|}\) for \(i=5,6,9,10\), \(\|\phi\|^2_{Q^i}=\frac{\|\phi(T,\cdot)\|^2_{L^2(\Omega)}}{|\Gamma|}\) for \(i=7,8,11,12\). 
		
		To write a compact form for the formulation, we define
		\begin{align}\label{eq:barRQ4}
			\|\bar{\mathcal{R}}_Q^4\|_Q^2
			:=\frac{1}{|Q_T|}\int_{Q_T}\bigg[
			\phi_{\theta^1}\partial_{tt}u_{\theta}
			- \sum_{i=1}^{d}\phi_{\bar{\theta}^{i}}\partial_{x_ix_i} u_{\theta}
			+ \phi_{\theta^2}v(u_\theta)
			- \phi_{\theta^{3}}f_{\bar\theta} u_{\theta}
			\bigg]^2\,dx\,dt.
		\end{align}
		
		The boundary residual for the wave equation is given by
		\begin{align}\label{eq:barRG4}
			\|\bar{\mathcal{R}}_{\Gamma}^4\|_{\Gamma}^2
			&:=\frac{1}{|\Gamma|}
			\Big(
			\|\phi_{\theta^4}u_\theta\|_{L^2(\Sigma_T)}^2
			+ \|\phi_{\tilde\theta^0}\partial_t u_\theta\|_{H^{1/2}(\Sigma_T)}^2 + \sum_{i=1}^{d} \|\phi_{\tilde\theta^i}\partial_{x_i}u_\theta\|_{H^{1/2}(\Sigma_T)}^2\\
			&\qquad\quad
			+\|\phi_{\theta^5}(0,\cdot)u_{\theta}(0,\cdot)-\phi_{\theta^6}(0,\cdot)u_0\|^2_{L^2(\Omega)}
			+\|\phi_{\theta^{9}}(0,\cdot)\partial_t u_{\theta}(0,\cdot)-\phi_{\theta^{10}}(0,\cdot)u_1\|^2_{L^2(\Omega)}\nonumber\\
			&\qquad\quad
			+\|\phi_{\theta^7}(T,\cdot)u_{\theta}(T,\cdot)-\phi_{\theta^8}(T,\cdot)z_0\|^2_{L^2(\Omega)}
			+\|\phi_{\theta^{11}}(T,\cdot)\partial_t u_{\theta}(T,\cdot)-\phi_{\theta^{12}}(T,\cdot)z_1\|^2_{L^2(\Omega)}\nonumber\\
			&\qquad\quad
			+\sum_{i=1}^{d}\|\phi_{\hat{\theta}^{2i-1}}(0,\cdot)\partial_{x_i}u_{\theta}(0,\cdot)-\phi_{\hat{\theta}^{2i}}(0,\cdot)\partial_{x_i}u_0\|^2_{L^2(\Omega)}\nonumber\\
			&\qquad\quad
			+\sum_{i=1}^{d}\|\phi_{\check{\theta}^{2i-1}}(T,\cdot)\partial_{x_i}u_{\theta}(T,\cdot)-\phi_{\check{\theta}^{2i}}(T,\cdot)\partial_{x_i}z_0\|^2_{L^2(\Omega)}\Big).\nonumber
		\end{align}

		For the wave equation, the auxiliary neural residual is given by
		\begin{align}\label{eq:RNN4}
			\|\mathcal{R}_{NN}^{4}\|_{NN}^2
			&:=\sum_{i=1}^{12}\|\phi_{\theta^i}-1\|^2_{Q^i}+\frac{1}{|Q_T|}\sum_{i=1}^{d}\|\phi_{\bar{\theta}^i}-1\|^2_{L^2(Q_T)}+\frac{1}{|\Gamma|}\sum_{i=1}^{d}\|\phi_{\tilde{\theta}^i}-1\|^2_{L^2(\Sigma_T)}\\
			&\quad+\frac{1}{|\Gamma|}\sum_{i=1}^{2d}\|\phi_{\hat{\theta}^i}(0,\cdot)-1\|^2_{L^2(\Omega)}+\frac{1}{|\Gamma|}\sum_{i=1}^{2d}\|\phi_{\check{\theta}^i}(T,\cdot)-1\|^2_{L^2(\Omega)},\nonumber
		\end{align}
		where \(\|\cdot\|^2_{Q^i}=\frac{\|\cdot\|^2_{L^2(Q_T)}}{|Q_T|}\) for \(i=1,2,3\), \(\|\cdot\|^2_{Q^4}=\frac{\|\cdot\|^2_{L^2(\Sigma_T)}}{|\Gamma|}\), \(\|\phi\|^2_{Q^i}=\frac{\|\phi(0,\cdot)\|^2_{L^2(\Omega)}}{|\Gamma|}\) for \(i=5,6,9,10\), \(\|\phi\|^2_{Q^i}=\frac{\|\phi(T,\cdot)\|^2_{L^2(\Omega)}}{|\Gamma|}\) for \(i=7,8,11,12\).

		Finally, the WeightedPINN formulation for bilinear control on the wave equation takes the compact form
		
		\begin{align*}
			\inf_{\theta,\bar{\theta}}\,
			\sup_{(\theta^i)_{i=1}^{12},(\bar\theta^i)_{i=1}^d,(\tilde\theta^i)_{i=1}^d,(\hat\theta^i)_{i=1}^{2d},(\check\theta^i)_{i=1}^{2d}}
			\|\bar{\mathcal{R}}_Q^{4}\|_{Q}^2
			+\lambda\|\bar{\mathcal{R}}_{\Gamma}^{4}\|_{\Gamma}^2
			-\rho\|\mathcal{R}_{NN}^{4}\|_{NN}^2.
		\end{align*}

	\end{itemize}
	
	\subsection{Illustrative examples in two space dimensions}\label{Sec:Illu}
	
	We present a representative example in two space dimensions to illustrate the behavior of the standard PINN method and the proposed WeightedPINN formulation for bilinear problems. The emphasis is on how the residual structure is modified and how this impacts the reconstruction of the control.
	
	\subsubsection{Example: Two-dimensional heat equation with bilinear control}
	
	Let $Q_T := (0,T)\times \Omega$, where $\Omega=(0,1)^2$. For simplicity, we take \(v \equiv 0\) in this example. We consider the bilinear control problem associated with the heat equation
	\begin{align*}
		\partial_t u - \Delta u = f(x)\,u,
		\qquad (t,x)\in Q_T,
	\end{align*}
	supplemented with homogeneous boundary condition
	\begin{align*}
		u = 0 \quad \text{on } \Sigma_T := (0,T)\times \partial\Omega,
	\end{align*}
	and prescribed initial and terminal data
	\begin{align*}
		u(0,x)=u_0(x), \qquad u(T,x)=z_0(x).
	\end{align*}
	
	In the standard PINN approach, one introduces neural network approximations $u_\theta$ and $f_{\bar\theta}$, and minimizes the loss
	\begin{align*}
		\mathcal{L}_{\mathrm{PINN}}(\theta,\bar\theta)
		=
		\frac{1}{|Q_T|}\int_{Q_T}
		\left(
		\partial_t u_\theta - \Delta u_\theta - f_{\bar\theta} u_\theta
		\right)^2\,dx\,dt
		+\lambda \|\mathcal{R}_\Gamma(u_\theta)\|^2_{\Gamma},
	\end{align*}
	where
	\begin{align*}
		\|\mathcal{R}_\Gamma(u_\theta)\|^2_{\Gamma}
		=
		\frac{1}{|\Gamma|}
		\Big(
			\|u_\theta\|^2_{H^{1/2}(\Sigma_T)}
		+\|u_\theta(0,\cdot)-u_0\|^2_{L^2(\Omega)}
		+\|u_\theta(T,\cdot)-z_0\|^2_{L^2(\Omega)}
		\Big).
	\end{align*}
	This corresponds to a uniform least-squares enforcement of the PDE and boundary conditions. In this formulation, the residual is measured globally over the whole space-time domain, so the training process is driven by an averaged error criterion. As a consequence, the approximation may fit the state reasonably well while still leaving noticeable inaccuracies in the recovered control, especially in our control settings where the dependence on the coefficient $f$ is sensitive.
	
	In the WeightedPINN approach, one introduces additional neural networks $\phi$ acting as adaptive weights. In two space dimensions, the weighted PDE residual becomes
	\begin{align*}
		\bar{\mathcal{R}}_Q
		=
		\phi_{\theta^1}\,\partial_t u_\theta
		-\phi_{\bar\theta^1}\,\partial_{x_1x_1}u_\theta
		-\phi_{\bar\theta^2}\,\partial_{x_2x_2}u_\theta
		-\phi_{\theta^3}\,f_{\bar\theta}u_\theta.
	\end{align*}
	
	We emphasize that, in two space dimensions, the Laplacian operator is decomposed into its directional components, namely
	\[
	\Delta u_\theta = \partial_{x_1x_1} u_\theta + \partial_{x_2x_2} u_\theta,
	\]
	and the WeightedPINN formulation assigns independent weights to each second-order derivative. This leads to the terms
	\[
	-\phi_{\bar\theta^1}\,\partial_{x_1x_1}u_\theta
	\quad \text{and} \quad
	-\phi_{\bar\theta^2}\,\partial_{x_2x_2}u_\theta,
	\]
	which are treated separately in the residual. 
	
	This directional weighting provides additional flexibility compared to a single scalar weight in front of the Laplacian. In particular, it allows the training procedure to distinguish anisotropic features of the approximation error, as the contributions of the second-order derivatives along the $x_1$ and $x_2$ directions may differ significantly across the domain. As a result, the maximization step can selectively amplify discrepancies associated with specific spatial directions, thereby improving the sensitivity of the loss to localized and direction-dependent errors.
	
	From a functional viewpoint, this corresponds to replacing the standard isotropic $L^2$ residual norm by an adaptive anisotropic metric, in which each differential component of the operator is weighted independently. This refinement is particularly relevant in control problems, where the identifiability of the coefficient $f$ may depend differently on variations along different spatial directions.

	In the same spirit, the weighted formulation for the boundary, initial, and terminal conditions is given by
	\begin{align*}
		\|\bar{\mathcal R}_\Gamma\|_{\Gamma}^2
		&= \frac{1}{|\Gamma|}\bigg(
			\|\phi_{\theta^4} u_\theta\|^2_{H^{1/2}(\Sigma_T)}
		+ \|\phi_{\tilde{\theta}^1} \partial_{x_1} u_\theta\|^2_{L^2(\Sigma_T)}
		+ \|\phi_{\tilde{\theta}^2} \partial_{x_2} u_\theta\|^2_{L^2(\Sigma_T)} \\
		&\quad\quad + \|\phi_{\theta^5}(0,\cdot) u_{\theta}(0,\cdot)
		- \phi_{\theta^6}(0,\cdot) u_{0}\|_{L^2(\Omega)}^2 \\
		&\quad\quad + \|\phi_{\theta^7}(T,\cdot) u_{\theta}(T,\cdot)
		- \phi_{\theta^8}(T,\cdot) z_{0}\|_{L^2(\Omega)}^2
		\bigg).
	\end{align*}

	The corresponding loss is embedded into the min--max problem
	\begin{align*}
		\inf_{\theta,\bar\theta}\sup_{(\theta^i)_{i=1}^8,(\bar\theta^i)_{i=1}^2}
		\Bigg(
		&\|\bar{\mathcal{R}}_Q\|_Q^2
		+\lambda \|\bar{\mathcal{R}}_\Gamma\|_\Gamma^2 \\
		&-\rho \Big(
			\frac{1}{|Q_T|}\big(\|\phi_{\theta^1}-1\|^2_{L^2(Q_T)}
	+\|\phi_{\theta^3}-1\|^2_{L^2(Q_T)}\\
		&\quad\quad
		+\|\phi_{\bar\theta^1}-1\|^2_{L^2(Q_T)}
		+\|\phi_{\bar\theta^2}-1\|^2_{L^2(Q_T)}\big)\\
		&\quad\quad
		+\frac{1}{|\Gamma|}\big(\|\phi_{\theta^4}-1\|^2_{L^2(\Sigma_T)}
		+\|\phi_{\theta^5}(0,\cdot)-1\|^2_{L^2(\Omega)}
		+\|\phi_{\theta^6}(0,\cdot)-1\|^2_{L^2(\Omega)}\\
		&\quad\quad
		+\|\phi_{\theta^7}(T,\cdot)-1\|^2_{L^2(\Omega)}
		+\|\phi_{\theta^8}(T,\cdot)-1\|^2_{L^2(\Omega)}
	\big)
		\Big)
		\Bigg).
	\end{align*}
	
	When $\|\bar{\mathcal{R}}_Q\|_Q=\|\bar{\mathcal R}_\Gamma\|_\Gamma=0$ and
	\[
	\phi_{\theta^1}=\phi_{\bar\theta^1}=\phi_{\bar\theta^2}=\phi_{\theta^3}=\phi_{\theta^4}=\phi_{\theta^5}=\phi_{\theta^6}=\phi_{\theta^7}=\phi_{\theta^8}=1,
	\]
	the formulation reduces to the original PDE. This formulation changes the role of the residual in the training process. Instead of being measured in a fixed norm, it is evaluated in an adaptive metric determined by the weight networks. The maximization step increases the influence of those regions where the equation is less accurately satisfied, while the penalization term prevents the weights from drifting too far from $1$ when such amplification is unnecessary. In this way, the method allocates more attention to the most informative parts of the domain and produces a more accurate recovery of both the state and the control.

	\section{Theoretical error estimate}\label{Sec:Theory}
	
	\subsection{PINNs and WeightedPINNs for internal control problems}
	
	The internal control problem for heat and wave equations has been extensively studied in the literature. Representative results can be found in \cite{FernandezZuazua2000, fernandez-cara_guerrero_2006, zuazua2024exactcontrollabilitystabilizationwave, fu_yong_zhang_2007}.
	The main result of this section establishes a convergence estimate for the WeightedPINN approximation in terms of the network architecture and the residual errors.
	
	We exploit the universal approximation properties of neural networks to construct numerical approximations of the control. To this end, we first introduce the necessary definitions and assumptions for the neural network framework.
	
	We recall the Heaviside step function
	\begin{align*}
		\mathcal{H}(x) =
		\begin{cases}
			1, & x>0,\\
			0, & x\le 0.
		\end{cases}
	\end{align*}
	
	A function \(\sigma \in W^{m,\infty}_{\mathrm{loc}}(\mathbb{R})\) is called \emph{\(m\)-th order quasi-decay} if there exist constants \(C_1, C_2 > 0\) such that, for every \(x\neq 0\),
	\begin{align*}
		|\sigma(x)-\mathcal{H}(x)|
		\le \min\left\{\frac{C_1}{|x|},\, C_2\right\},
		\quad
		|\sigma^{(k)}(x)|
		\le \min\left\{\frac{C_1}{|x|^{k+1}},\, C_2\right\},
		\quad 1 \le k \le m.
	\end{align*}
	
	Furthermore, we say that \(\sigma\) \emph{exhibits nonlinear behavior} if there exist \(a \in \mathbb{R}\) and \(\delta > 0\) such that
	\begin{align*}
		\sigma''(a) \neq 0,
		\quad \text{and} \quad
		\sigma \in C^\infty( (a-\delta, a+\delta) ).
	\end{align*}
	
	We impose the following assumption on the activation function $\sigma$:
	
	\begin{enumerate}[label=(A\arabic*),series=con]
		\item \label{con:sigma}
		$\sigma \in W^{m,\infty}_{\mathrm{loc}}(\mathbb{R})$ exhibits nonlinear behavior, and either $\sigma(x)$ or $\sigma(x)/x$ is $m$-th order quasi-decaying.
	\end{enumerate}
	
	In \cite{yang2025deepneuralnetworksgeneral}, several examples satisfying Assumption~\ref{con:sigma} are discussed. These include sigmoid, HardSigmoid, tanh, HardTanh, Softsign, Soft-root-sign (SRS), ELU, SELU, CELU, GELU, SiLU, dSiLU, Softplus, and Mish.
	
	The weight neural networks $\phi_{\theta}$ may have flexible architectures in the sense that we can use many different activation functions to construct \(\phi_{\theta}\). We require them and their derivatives to be uniformly bounded. Specifically, we assume
	\begin{enumerate}[label=(A\arabic*),resume=con]
		\item \label{con:weight}
			There exists \(M>0\) such that
			\[
				\sup_{(t,x)\in\overline{Q_T}} \bigl\{|\phi_\theta(t,x)|+|\partial_t\phi_\theta(t,x)|+\sum_{i=1}^{d}|\partial_{x_i}\phi_\theta(t,x)| \bigr\}\le M
			\]
				for any parameter $\theta$ in a given parameter set \(\Theta\).
	\end{enumerate}
	As examples, we consider the weight networks with \(3\) hidden layers. On each hidden layer, we take a \(C^1\) activation function, which can be sigmoid, tanh, or \(\text{ReLU}^{m+1},m\ge 1\). The parameter set \(\Theta\) is assumed to be compact subset of
	\[\mathbb R^{1\times \mathcal W_3}\times\mathbb R^{1}\times \dots\times\mathbb R^{\mathcal W_1\times d}\times\mathbb R^{\mathcal W_1},\]
	which ensures the boundedness condition in \ref{con:weight}.

	\begin{Remark}\label{rmk:uniformboundweight}
		In numerical experiments, it suffices to assume that \(\sup_{(t,x)\in \overline{Q_T}}|\phi_{\theta}(t,x)|\le M\) since several higher-order regularity norms reduced to the \(L^2\)-norm.
	\end{Remark}
	
	\begin{Remark}
		For brevity, we omit the explicit dependence on the parameter set \(\Theta\) when taking supremum over the weight networks in the min--max formulations.
	\end{Remark}

	\begin{Theorem}
		Let \(u \in W^{2,\infty}(Q_T)\) be the solution corresponding to an exact control \(f \in L^{\infty}(q_T)\) of equation \eqref{Heat1}, with initial and terminal conditions \(u_0, z_0 \in L^\infty(\Omega)\).
		The constants \(\lambda\) and \(\rho\) are given such that
		\begin{align*}
		\rho&\ge\max\{3\lambda\|u_0\|^2_{L^\infty(\Omega)},3\lambda\|z_0\|^2_{L^\infty(\Omega)},\\
	&\qquad\quad(d+4)\|u\|^2_{W^{2,\infty}(Q_T)},(d+4)\|v(u)\|^2_{L^\infty(Q_T)},(d+4)\|f\|^2_{L^\infty(q_T)}\}.\end{align*}
		Suppose that the activation function \(\sigma\) satisfies Assumption \ref{con:sigma}, or that \(\sigma = \mathrm{ReLU}^{m+1}\) with \(m \ge 2\). Assume further that the weight networks \(\phi_{\theta^i},\phi_{\bar \theta^i}\) satisfy Assumption \ref{con:weight}.
		
		Then, for any \(\delta > 0\), and sufficiently large \(\mathcal{W}, \mathcal{L} \in \mathbb{N}\) such that \(\log_2(\mathcal{W}) \le \mathcal{L}\), there exist neural networks \(u_{\theta}\) and \(f_{\bar{\theta}}\), constructed using \(\sigma\), with width \(C_1 \mathcal{W}\log \mathcal{W}\) and depth \(C_2 \mathcal{L}\log \mathcal{L}\), respectively, such that
		\begin{align*}
			\sup_{(\theta^i)_{i=1}^{8},(\bar{\theta}^i)_{i=1}^{d}}\|\bar{\mathcal{R}}_Q^{1}\|^2_{Q}+\lambda\|\bar{\mathcal{R}}_\Gamma^{1}\|^2_{\Gamma}-\rho\|\mathcal R_{NN}^{1}\|_{NN}^2
			\le C_3 (\mathcal{W}^{-\frac{1}{d+1}}\mathcal{L}^{-\frac{1}{d+1}} + \delta)^2.
		\end{align*}
		Here, \(C_i\) for \(i=1,2,3\) are constants independent of \(\mathcal{W}\), \(\mathcal{L}\), and \(\delta\). The residuals \(\bar{\mathcal{R}}_Q^{1}\), \(\bar{\mathcal{R}}_\Gamma^{1}\), and \(\mathcal{R}_{NN}^{1}\) are defined in \eqref{eq:barRQ1}, \eqref{eq:barRG1}, and \eqref{eq:RNN1}, respectively.
		\label{thm:errorHeat}
	\end{Theorem}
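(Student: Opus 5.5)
The plan is to exploit the exact solution. Since $(u,f)$ satisfies \eqref{Heat1} exactly, we write every weighted term as ``exact part plus approximation error'' and then absorb the exact parts with the penalty $-\rho\|\mathcal R^1_{NN}\|^2_{NN}$. Concretely, set $\psi_1=\phi_{\theta^1}-1$, $\psi_2=\phi_{\theta^2}-1$, $\psi_3=\phi_{\theta^3}-1$ and $\bar\psi_i=\phi_{\bar\theta^i}-1$. Using $\partial_t u-\sum_i\partial_{x_ix_i}u+v(u)-f\mathbf 1_\omega=0$, the weighted residual splits as
\begin{align*}
&\phi_{\theta^1}\partial_t u_\theta-\sum_i\phi_{\bar\theta^i}\partial_{x_ix_i}u_\theta+\phi_{\theta^2}v(u_\theta)-\phi_{\theta^3}f_{\bar\theta}\mathbf 1_\omega\\
&\quad=\Big[\phi_{\theta^1}\partial_t(u_\theta-u)-\sum_i\phi_{\bar\theta^i}\partial_{x_ix_i}(u_\theta-u)+\phi_{\theta^2}\big(v(u_\theta)-v(u)\big)-\phi_{\theta^3}(f_{\bar\theta}-f)\mathbf 1_\omega\Big]\\
&\qquad+\Big[\psi_1\partial_t u-\sum_i\bar\psi_i\partial_{x_ix_i}u+\psi_2v(u)-\psi_3 f\mathbf 1_\omega\Big].
\end{align*}
Call the first bracket $E$ and the second $P$. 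By Cauchy--Schwarz, $(E+P)^2\le 2E^2+2P^2$, but the constants must be managed carefully. The $P$-term has $d+3$ summands, so $P^2\le (d+3)\big(\|u\|^2_{W^{2,\infty}}(\psi_1^2+\sum_i\bar\psi_i^2)+\|v(u)\|_\infty^2\psi_2^2+\|f\|^2_\infty\psi_3^2\big)$. To end up with the coefficient $d+4$ in the hypothesis, I would use the weighted Young inequality $(E+P)^2\le (1+\epsilon^{-1})E^2+(1+\epsilon)P^2$ with $\epsilon=1/(d+3)$. This gives $(d+3)(1+\epsilon)=d+4$, and the result is then bounded by $\rho$ times the penalty terms for indices $1,2,3$ and $\bar\theta^i$. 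The $E$-term is bounded, via Assumption \ref{con:weight} and the Lipschitz property of $v$, by $C(M,d,\mathrm{Lip}(v))\big(\|u_\theta-u\|^2_{H^2(Q_T)}+\|f_{\bar\theta}-f\|^2_{L^2(q_T)}\big)$.

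The boundary part is handled the same way. On $\Sigma_T$ we have $u=0$, so $\phi_{\theta^4}u_\theta=\phi_{\theta^4}(u_\theta-u)$. Its $H^{1/2}(\Sigma_T)$ norm is bounded by $C(M)\|u_\theta-u\|_{H^{1/2}(\Sigma_T)}\le C\|u_\theta-u\|_{H^2(Q_T)}$, using the $W^{1,\infty}$ bound on the weight (so multiplication is bounded on $H^{1/2}$) and the trace theorem; this term needs no penalty. For the initial condition we write $\phi_{\theta^5}u_\theta-\phi_{\theta^6}u_0=\phi_{\theta^5}(u_\theta-u)(0)+(\phi_{\theta^5}-1)u_0-(\phi_{\theta^6}-1)u_0$. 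Splitting into three pieces with factor $3$ gives $3\lambda\|u_0\|^2_\infty\big(\|\phi_{\theta^5}-1\|^2+\|\phi_{\theta^6}-1\|^2\big)$, which is absorbed by $\rho$. The terminal condition is treated identically with $z_0$. A constant-tracking subtlety arises here: the factor $3$ also multiplies the error piece, which is fine since it only affects $C_3$. After these absorptions, and since the remaining penalty pieces (for $\phi_{\theta^4}$) are nonpositive, the supremum over all weights is at most $C\big(\|u_\theta-u\|^2_{H^2(Q_T)}+\|f_{\bar\theta}-f\|^2_{L^2(q_T)}\big)$ uniformly in the weight parameters.

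It then remains to choose the networks. For $u\in W^{2,\infty}(Q_T)$, the approximation result of \cite{yang2025deepneuralnetworksgeneral} for activations satisfying \ref{con:sigma} (or $\mathrm{ReLU}^{m+1}$) yields $u_\theta$ with width $C_1\mathcal W\log\mathcal W$ and depth $C_2\mathcal L\log\mathcal L$ such that $\|u_\theta-u\|_{W^{2,\infty}}\lesssim(\mathcal W\mathcal L)^{-2/(d+1)}\le(\mathcal W\mathcal L)^{-1/(d+1)}$; we work in $H^2$ with one order of smoothness lost, which still gives the stated rate. The control $f$ is only $L^\infty$, so no rate is available. I would therefore first approximate $f$ (extended by zero) by a smooth $\tilde f$ with $\|f-\tilde f\|_{L^2(q_T)}\le\delta$ by density, then approximate $\tilde f$ by a network with error $\lesssim(\mathcal W\mathcal L)^{-1/(d+1)}$. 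The constant from $\tilde f$ depends on $\delta$, and this is the main obstacle: the statement claims $C_3$ is independent of $\delta$. I would handle it by requiring $\mathcal W,\mathcal L$ ``sufficiently large'' depending on $\delta$, so that the $\tilde f$-dependent term is at most $\delta$, giving a total of $C_3\big((\mathcal W\mathcal L)^{-1/(d+1)}+\delta\big)^2$. Care is also needed to ensure that the $H^2$ approximation bounds both second derivatives and the traces (via Sobolev embedding on the bounded domain).
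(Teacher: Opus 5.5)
Your argument has the same structure as the paper's proof. You split the weighted residual at the exact pair $(u,f)$ and absorb the terms $(\phi-1)\cdot(\text{exact operator term})$ into $-\rho\|\mathcal R^1_{NN}\|^2_{NN}$; your weighted Young inequality with $\epsilon=1/(d+3)$ is equivalent to the paper's $(a_1+\dots+a_{d+4})^2\le(d+4)\sum_j a_j^2$. The three-way split of the initial and terminal terms and the density argument for $f$ also match the paper.

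\textbf{The gap is the approximation of the state.} You claim that for $u\in W^{2,\infty}(Q_T)$ the result of \cite{yang2025deepneuralnetworksgeneral} gives $\|u_\theta-u\|_{W^{2,\infty}}\lesssim(\mathcal W\mathcal L)^{-2/(d+1)}$. That rate is the ``$W^{3,\infty}$ target, $W^{2,\infty}$ error'' case of the estimate, so it needs one derivative more than the norm in which the error is measured. For $u$ only in $W^{2,\infty}$ there is no such rate in $W^{2,\infty}$ or in $H^2$. In $W^{2,\infty}$ there need not even be convergence: with $\mathrm{ReLU}^3$ or $\tanh$ the networks are $C^2$, and $C^2(\overline{Q_T})$ is closed in $W^{2,\infty}(Q_T)$, so a $u$ whose second derivatives jump is not a limit of networks in that norm. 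Saying you ``work in $H^2$ with one order of smoothness lost'' does not fix this. Your term $E$ contains $\partial_{x_ix_i}(u_\theta-u)$ in $L^2$, and nothing you state controls it at the claimed rate.

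The repair is to treat $u$ exactly as you treat $f$, which is what the paper does.
\begin{enumerate}
\item Extend $u$ to $E_1u\in H^2(\mathbb R^{d+1})$ and set $u_\varepsilon=(E_1u)*\eta_\varepsilon$, with $\varepsilon=\varepsilon(\delta)$ chosen so that $\|u-u_\varepsilon\|_{H^2(Q_T)}\le\delta$.
\item Note that $\|u_\varepsilon\|_{W^{3,\infty}}\lesssim\varepsilon^{-(d+1)/2-1}\|u\|_{H^2(Q_T)}$.
\item Approximate $u_\varepsilon$ in $W^{2,\infty}$ at rate $(\mathcal W\mathcal L)^{-2/(d+1)}$.
\item Take $\mathcal W,\mathcal L$ large enough that $(\mathcal W\mathcal L)^{1/(d+1)}\ge\varepsilon^{-1-(d+1)/2}$. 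Then $\|u_\theta-u_\varepsilon\|_{H^2(Q_T)}\lesssim(\mathcal W\mathcal L)^{-1/(d+1)}$ with a constant independent of $\delta$.
\item By the triangle inequality, $\|u_\theta-u\|_{H^2(Q_T)}\lesssim(\mathcal W\mathcal L)^{-1/(d+1)}+\delta$.
\end{enumerate}
Inserting this into your bounds for $E$, the lateral trace and the initial/terminal traces completes the argument. The dependence of ``sufficiently large'' on $\delta$ is the same one you already accepted for $f$.
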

	
	\begin{Remark}
		In general, the initial and terminal data satisfy \(u_0,z_0\in L^2(\Omega)\). A weak solution of heat equation \eqref{Heat1} satisfies the regularity
		\[u \in C([0,T];L^2(\Omega))\cap L^2(0,T;H^1(\Omega))\]
		and the control \(f\in L^2(q_T)\) (see \cite{FernandezZuazua2000,Cazenave1980}).
		
		Higher regularity is typically required to ensure the well-posedness and stability of the approximation scheme. In this work, we assume that \(u_0,z_0\in L^\infty(\Omega)\) and \(u \in W^{2,\infty}(Q_T),f\in L^{\infty}(q_T)\).
	\end{Remark}
	
	\begin{proof}
		
		We recall a standard mollifier
		\[
		\eta_\varepsilon(t,x) := \varepsilon^{-(d+1)}\, \eta(\varepsilon^{-1}t,\varepsilon^{-1}x),
		\]
		where
		\begin{align}\label{eq:mollifier}
			\eta(t,x) :=
			\begin{cases}
				I \exp\!\left(\frac{1}{|(t,x)|^2-1}\right), & \text{if } |(t,x)|^2<1,\\
				0, & \text{otherwise},
			\end{cases}
		\end{align}
		and \(I\) is chosen such that \(\int_{\mathbb{R}^{d+1}} \eta(t,x)\,dt\,dx = 1\).
		
		Let \[Q_T':= \{(t,x)\in \mathbb R^{d+1}\mid \text{there is }(s,y)\in Q_T: \|(t-s,x-y)\|\le1\}\] and \[q_T':= \{(t,x)\in \mathbb R^{d+1}\mid \text{there is }(s,y)\in q_T: \|(t-s,x-y)\|\le1\}.\]
		By \cite[Theorem 1, Section 5.4]{Evans:1998:PDE}, there exist extension operators \(E_1:H^2(Q_T)\to H^2(\mathbb R^{d+1}), E_2:L^2(q_T)\to L^2(\mathbb R^{d+1})\) and constants \(C_{E_1},C_{E_2}\) such that
		\begin{align*}
			E_1 u = u\text{ almost everywhere in }Q_T,\quad E_2 f = f\text{ almost everywhere in }q_T,
		\end{align*}
		and
		\begin{align*}
			\|E_1 u\|_{H^2(\mathbb R^{d+1})}\le C_{E_1}\|u\|_{H^2(Q_T)},\quad \|E_2 f\|_{L^2(\mathbb R^{d+1})}\le C_{E_2}\|f\|_{L^2(q_T)},
		\end{align*}
		and \(E_1 u\) has support on \(Q_T'\), \(E_2 f\) has support on \(q_T'\).

		We define
		\begin{align*}
		u_\varepsilon := (E_1 u) * \eta_\varepsilon,\quad f_\varepsilon := (E_2 f) * \eta_\varepsilon.\end{align*}

		We denote the differences
		\[\hat{u} := u - u_\varepsilon,\quad \hat{f} := f - f_\varepsilon.\]
		By \cite[Theorem 1, Section 5.3]{Evans:1998:PDE}, we have $u_\varepsilon \to u$ in $H^2(Q_T)$ and \(f_\varepsilon\to f\) in \(L^2(q_T)\) as $\varepsilon \to 0$. Hence, for any $\delta > 0$, there exists $\varepsilon \in (0,1)$ such that
		\begin{align*}
			\|\hat{u}\|_{H^2(Q_T)} + \|\hat{f}\|_{L^2(q_T)} \le \delta.
		\end{align*}
		
		By Young's inequality,
		\begin{align*}
			\|(E_1u) * \eta_\varepsilon\|_{W^{2,\infty}(\mathbb R^{d+1})}
			\le \|\eta_\varepsilon\|_{L^2(\mathbb R^{d+1})} \|E_1 u\|_{H^2(\mathbb R^{d+1})}
			\le \varepsilon^{-(d+1)/2}C_{E_1}\|\eta\|_{L^2(\mathbb R^{d+1})}\|u\|_{H^2(Q_T)}.
		\end{align*}

		Let us recall the Sobolev semi-norm \[|u|^p_{W^{k,p}}:=\sum_{\alpha_0+\dots+\alpha_d=k}\|\partial_t^{\alpha_0}\partial_{x_1}^{\alpha_1}\dots\partial_{x_d}^{\alpha_d}u\|^p_{L^p}.\] Also by Young's inequality, for the third-order Sobolev semi-norm, we get the estimate
		\begin{align*}
			|u_\varepsilon|_{W^{3,\infty}(\mathbb R^{d+1})}\le |\eta_\varepsilon|_{H^1(\mathbb R^{d+1})} |E_1 u|_{H^2(\mathbb R^{d+1})}\le \varepsilon^{-(d+1)/2-1}C_{E_1}|\eta|_{H^1(\mathbb R^{d+1})}\|u\|_{H^2(Q_T)}.
		\end{align*}
		
		Hence, we obtain
		\begin{align}
			\|u_\varepsilon\|_{W^{3,\infty}(\mathbb R^{d+1})}
			\le \varepsilon^{-(d+1)/2-1}C_{E_1}\left(\varepsilon\|\eta\|_{L^2(\mathbb R^{d+1})}+|\eta|_{H^1(\mathbb R^{d+1})}\right)\|u\|_{H^2(Q_T)}.
			\label{eq:YoungW1}
		\end{align}
		
		We apply these estimates to \(f\), which yields
		\begin{align}
			\|f_\varepsilon\|_{W^{2,\infty}(\mathbb R^{d+1})}
			\le \varepsilon^{-(d+1)/2-2}C_{E_2}\left(\varepsilon^{2}\|\eta\|_{L^2(\mathbb R^{d+1})}+\varepsilon|\eta|_{H^1(\mathbb R^{d+1})}+|\eta|_{H^2(\mathbb R^{d+1})}\right)\|f\|_{L^2(q_T)}.
			\label{eq:YoungW2}
		\end{align}
		
		Let \(\mathcal{L}_v\) denote the Lipschitz constant of \(v\) and let \(M\) be the constant from Assumption \ref{con:weight}.	
		
		We estimate the residual as follows:
		\begin{align*}
			\|\bar{\mathcal{R}}_Q^1(u_\varepsilon,f_\varepsilon)\|_{Q}&\le \Big\|\phi_{\theta^1}\partial_t \hat u - \sum_{i=1}^{d}\phi_{\bar{\theta}^i}\partial_{x_i x_i}\hat u+\phi_{\theta^2}(v(u)-v(u_\varepsilon))-\phi_{\theta^3}\hat f \mathbf 1_{\omega}\Big\|_Q\nonumber\\
			&\quad+\|\phi_{\theta^1}-1\|_{Q}\|\partial_t u\|_{L^\infty(Q_T)}+\sum_{i=1}^{d}\|\phi_{\bar\theta^i}-1\|_{Q}\|\partial_{x_i x_i} u\|_{L^\infty(Q_T)}\nonumber\\
			&\quad+\|\phi_{\theta^2}-1\|_{Q}\|v(u)\|_{L^\infty(Q_T)}+\|\phi_{\theta^3}-1\|_{Q}\|f\|_{L^\infty(q_T)}.\nonumber
		\end{align*}

		By the uniform bound of the weight networks, we have
		\begin{align}
			\label{eq:est_res}
			&\Big\|\phi_{\theta^1}\partial_t \hat u - \sum_{i=1}^{d}\phi_{\bar{\theta}^i}\partial_{x_i x_i}\hat u+\phi_{\theta^2}(v(u)-v(u_\varepsilon))-\phi_{\theta^3}\hat f \mathbf 1_{\omega}\Big\|_Q\\
			&\quad\quad\le \frac{M}{\sqrt{|Q_T|}}\Big(
			\|\partial_{t}\hat{u}\|_{L^2(Q_T)}
			+\|\Delta \hat{u}\|_{L^2(Q_T)}
			+\|v(u)-v(u_\varepsilon)\|_{L^2(Q_T)}
			+\|\hat{f}\|_{L^2(q_T)}
			\Big)\nonumber\\
			&\quad\quad\le \frac{M}{\sqrt{|Q_T|}}\Big(
			\|\hat{u}\|_{H^2(Q_T)} 
			+ \mathcal{L}_v \|\hat{u}\|_{L^2(Q_T)} 
			+ \|\hat{f}\|_{L^2(q_T)}
			\Big)\nonumber\\
			&\quad\quad\le \frac{M(1+\mathcal{L}_v)}{\sqrt{|Q_T|}}\,\delta.\nonumber
		\end{align}

		By \cite[Theorem 25]{yang2025deepneuralnetworksgeneral}, there exist neural networks \(u_\theta\) and \(f_{\bar{\theta}}\) with width \(C_1 \mathcal{W}\log \mathcal{W}\) and depth \(C_2 \mathcal{L}\log \mathcal{L}\), where \(C_1,C_2\) are independent of \(\mathcal{W},\mathcal{L}\) and \(\delta\), such that
		\begin{align}
			\|u_\varepsilon-u_\theta\|_{W^{2,\infty}(Q_T)}
			&\le C_{Q_T}(d)\|u_\varepsilon\|_{W^{3,\infty}(Q_T)}\mathcal{W}^{-\frac{2}{d+1}}\mathcal{L}^{-\frac{2}{d+1}},\label{eq:udiff}\\
			\|f_\varepsilon-f_{\bar{\theta}}\|_{L^\infty(q_T)}
			&\le C_{q_T}(d)\|f_\varepsilon\|_{W^{2,\infty}(q_T)}\mathcal{W}^{-\frac{4}{d+1}}\mathcal{L}^{-\frac{4}{d+1}}.\label{eq:fdiff}
		\end{align}
		
		Choosing \(\mathcal{W},\mathcal{L}\) sufficiently large such that \(\mathcal{W}^{\frac{1}{d+1}}\mathcal{L}^{\frac{1}{d+1}} \ge \varepsilon^{-1-\frac{d+1}{2}},\) by \eqref{eq:YoungW1}, \eqref{eq:YoungW2}, \eqref{eq:udiff}, and \eqref{eq:fdiff}, we obtain
		\begin{align*}
			\|u_\varepsilon-u_\theta\|_{H^{2}(Q_T)} + \|f_\varepsilon-f_{\bar{\theta}}\|_{L^2(q_T)}
			\le \tilde{C}_0\mathcal{W}^{-\frac{1}{d+1}}\mathcal{L}^{-\frac{1}{d+1}},
		\end{align*}
		where \(\tilde C_0>0\) is independent of \(\mathcal W,\mathcal L\) and \(\delta\). 

		Similar to \eqref{eq:est_res}, for \(\tilde{u}:=u_\theta - u_\varepsilon,\tilde f:=f_{\bar{\theta}}-f_\varepsilon\), we have
		\begin{align*}
			\Big\|\phi_{\theta^1}\partial_t \tilde u - \sum_{i=1}^{d}\phi_{\bar{\theta}^i}\partial_{x_i x_i}\tilde u+\phi_{\theta^2}(v(u_\theta)-v(u_\varepsilon))-\phi_{\theta^3}\tilde f \mathbf 1_{\omega}\Big\|_Q \le \frac{M(1+\mathcal L_v)}{\sqrt{|Q_T|}} \tilde{C}_0\mathcal{W}^{-\frac{1}{d+1}}\mathcal{L}^{-\frac{1}{d+1}}.
		\end{align*}
		
		Combining the above estimates yields
		\begin{align*}
			\|\bar{\mathcal{R}}_Q^1(u_\theta,f_{\bar{\theta}})\|_{Q}
			&\le \Big\|\phi_{\theta^1}\partial_t \tilde u - \sum_{i=1}^{d}\phi_{\bar{\theta}^i}\partial_{x_i x_i}\tilde u+\phi_{\theta^2}(v(u_\theta)-v(u_\varepsilon))-\phi_{\theta^3}\tilde f \mathbf 1_{\omega}\Big\|_Q+\|\bar{\mathcal{R}}_Q^1(u_\varepsilon,f_\varepsilon)\|_{Q}\\
			&\le \frac{M(1+\mathcal{L}_v)}{\sqrt{|Q_T|}}(\tilde{C}_0\mathcal{W}^{-\frac{1}{d+1}}\mathcal{L}^{-\frac{1}{d+1}}+\delta)\nonumber\\
			&\quad+\|\phi_{\theta^1}-1\|_{Q}\|\partial_t u\|_{L^\infty(Q_T)}+\sum_{i=1}^{d}\|\phi_{\bar\theta^i}-1\|_{Q}\|\partial_{x_i x_i} u\|_{L^\infty(Q_T)}\nonumber\\
			&\quad+\|\phi_{\theta^2}-1\|_{Q}\|v(u)\|_{L^\infty(Q_T)}+\|\phi_{\theta^3}-1\|_{Q}\|f\|_{L^\infty(q_T)}.\nonumber\\
			&\le \tilde C_1 (\mathcal{W}^{-\frac{1}{d+1}}\mathcal{L}^{-\frac{1}{d+1}} + \delta),\nonumber\\
			&\quad+\|\phi_{\theta^1}-1\|_{Q}\|\partial_t u\|_{L^\infty(Q_T)}+\sum_{i=1}^{d}\|\phi_{\bar\theta^i}-1\|_{Q}\|\partial_{x_i x_i} u\|_{L^\infty(Q_T)}\nonumber\\
			&\quad+\|\phi_{\theta^2}-1\|_{Q}\|v(u)\|_{L^\infty(Q_T)}+\|\phi_{\theta^3}-1\|_{Q}\|f\|_{L^\infty(q_T)}.\nonumber
		\end{align*}
		where \(\tilde C_1>0\) is independent of \(\mathcal W,\mathcal L\) and \(\delta\).

		Since \[\rho\ge(d+4)\max\{\|u\|^2_{W^{2,\infty}(Q_T)},\|v(u)\|^2_{L^\infty(Q_T)},\|f\|^2_{L^\infty(q_T)}\},\]
		by the inequality \( (a_1+\dots+a_{d+4})^2 \le (d+4)(a_1^2+\dots+a_{d+4}^2)\), we obtain
			\begin{align}\label{eq:est_R1Q}
				\|\bar{\mathcal{R}}_Q^1(u_\theta,f_{\bar{\theta}})\|^2_{Q}&\le  (d+4)\tilde C_1^2 (\mathcal{W}^{-\frac{1}{d+1}}\mathcal{L}^{-\frac{1}{d+1}} + \delta)^2\\
				&\quad +\rho\Big(\|\phi_{\theta^1}-1\|_{Q}^2+\sum_{i=1}^{d}\|\phi_{\bar\theta^i}-1\|^2_{Q}+\|\phi_{\theta^2}-1\|_{Q}^2+\|\phi_{\theta^3}-1\|_{Q}^2\Big)\nonumber
			\end{align}
		
		We now estimate the boundary term by
		\begin{align*}
			\|\phi_{\theta^4}u_\theta\|_{H^{1/2}(\Sigma_T)}^2
			\le M^2 \|u_\theta\|^2_{H^{1}(\Sigma_T)}.
		\end{align*}
		For the initial condition, by triangle inequality, we estimate
		\begin{align*}
			\|\phi_{\theta^5}(0,\cdot)u_\theta(0,\cdot)-\phi_{\theta^6}(0,\cdot)u_0\|_{L^2(\Omega)}
			&\le \|\phi_{\theta^5}(0,\cdot)(u_\theta(0,\cdot)-u_0)\|_{L^2(\Omega)}+\|(\phi_{\theta^5}(0,\cdot)-\phi_{\theta^6}(0,\cdot))u_0\|_{L^2(\Omega)}\\
			&\le M\|u_\theta (0,\cdot)-u_0\|_{L^2(\Omega)}\\ 
			&\quad + \|u_0\|_{L^\infty(\Omega)}(\|\phi_{\theta^5}(0,\cdot)-1\|_{L^2(\Omega)}+\|\phi_{\theta^6}(0,\cdot)-1\|_{L^2(\Omega)}).
		\end{align*}

		Since \[\rho\ge 3\lambda\max\{\|u_0\|^2_{L^\infty(\Omega)},\|z_0\|^2_{L^\infty(\Omega)}\},\] by the inequality \( (a+b+c)^2\le 3a^2 + 3b^2 + 3c^2\), we obtain
		\begin{align}\label{eq:est_init}
			\|\phi_{\theta^5}(0,\cdot)u_\theta(0,\cdot)-\phi_{\theta^6}(0,\cdot)u_0\|^2_{L^2(\Omega)}
			- \frac{\rho}{\lambda}(\|\phi_{\theta^5}(0,\cdot)-1\|^2_{L^2(\Omega)}+\|\phi_{\theta^6}(0,\cdot)-1\|^2_{L^2(\Omega)})\nonumber\\
			\le 3M^2\|u_\theta (0,\cdot)-u_0\|^2_{L^2(\Omega)}.
		\end{align}
Similarly, we get
\begin{align*}
			\|\phi_{\theta^7}(T,\cdot)u_\theta(T,\cdot)-\phi_{\theta^8}(T,\cdot)z_0\|^2_{L^2(\Omega)}
			- \frac{\rho}{\lambda}(\|\phi_{\theta^7}(T,\cdot)-1\|^2_{L^2(\Omega)}+\|\phi_{\theta^8}(T,\cdot)-1\|^2_{L^2(\Omega)})\\
			\le 3M^2\|u_\theta (T,\cdot)-z_0\|^2_{L^2(\Omega)}.
		\end{align*}

		Thus, we deduce that
		\begin{align*}
			\lambda\|\bar{\mathcal R}_{\Gamma}^1(u_\theta)\|^2_{\Gamma}&\le \frac{3M^2}{|\Gamma|}\|u_\theta-u\|^2_{H^1(\Gamma)}\\
			&\quad+\rho\Big(\|\phi_{\theta^5}(0,\cdot)-1\|^2_{L^2(\Omega)}+\|\phi_{\theta^6}(0,\cdot)-1\|^2_{L^2(\Omega)}\\
			&\quad\quad+\|\phi_{\theta^7}(T,\cdot)-1\|^2_{L^2(\Omega)}+\|\phi_{\theta^8}(T,\cdot)-1\|^2_{L^2(\Omega)}\Big).
		\end{align*}

		By the Trace Theorem \cite[Theorem 1, Section 5.5]{Evans:1998:PDE}, there exists \(\tilde C_2>0\) such that
		\[
			\|u_\theta-u\|_{H^1(\Gamma)}^2
			\le \tilde C_2\|u_\theta-u\|^2_{H^2(Q_T)}\le 2\tilde C_2(\|u_\theta-u_\varepsilon\|^2_{H^2(Q_T)}+\|\hat u\|^2_{H^2(Q_T)}),
		\]
		which implies
		\begin{align}\label{eq:est_R1G_NN}
			\lambda\|\bar{\mathcal R}_{\Gamma}^1(u_\theta)\|^2_{\Gamma}
			&\le \tilde C_3 (\mathcal{W}^{-\frac{1}{d+1}}\mathcal{L}^{-\frac{1}{d+1}} + \delta)^2\\
			&\quad+\rho\Big(\|\phi_{\theta^5}(0,\cdot)-1\|^2_{L^2(\Omega)}+\|\phi_{\theta^6}(0,\cdot)-1\|^2_{L^2(\Omega)}\nonumber\\
			&\quad\quad+\|\phi_{\theta^7}(T,\cdot)-1\|^2_{L^2(\Omega)}+\|\phi_{\theta^8}(T,\cdot)-1\|^2_{L^2(\Omega)}\Big),\nonumber
		\end{align}
		where \(\tilde C_3>0\) is independent of \(\mathcal{W},\mathcal L\) and \(\delta\).

		By \eqref{eq:est_R1Q}, \eqref{eq:est_R1G_NN}, there exists a constant \(C_3>0\) independent of \(\mathcal W,\mathcal L\) and \(\delta\) such that
		\begin{align*}
			\|\bar{\mathcal{R}}_Q^{1}\|^2_{Q}+\lambda\|\bar{\mathcal{R}}_\Gamma^{1}\|^2_{\Gamma}-\rho\|\mathcal R_{NN}^{1}\|_{NN}^2
			\le C_3 (\mathcal{W}^{-\frac{1}{d+1}}\mathcal{L}^{-\frac{1}{d+1}} + \delta)^2
		\end{align*}
		for any given parameters \((\theta^i)_{i=1}^{8},(\bar{\theta}^i)_{i=1}^{d}\).	
	\end{proof}
	
	We now extend the previous result to the wave equation.
	
	\begin{Theorem}
		Let \(u \in W^{2,\infty}(Q_T)\) be the solution corresponding to an exact control \(f \in L^{\infty}(q_T)\) of equation \eqref{Wave1}, with initial and terminal conditions
		\((u_0,u_1), (z_0,z_1) \in W^{1,\infty}(\Omega)\times L^\infty(\Omega)\). 
		The constants \(\lambda\) and \(\rho\) are given such that
		\begin{align*}
		\rho&\ge\max\{3\lambda\|u_0\|^2_{W^{1,\infty}(\Omega)},3\lambda\|u_1\|^2_{L^\infty(\Omega)},3\lambda\|z_0\|^2_{W^{1,\infty}(\Omega)},3\lambda\|z_1\|^2_{L^\infty(\Omega)}\\
&\qquad\quad(d+4)\|u\|^2_{W^{2,\infty}(Q_T)},(d+4)\|v(u)\|^2_{L^\infty(Q_T)},(d+4)\|f\|^2_{L^\infty(q_T)}\}.\end{align*}
		Suppose that the activation function \(\sigma\) satisfies Assumption \ref{con:sigma}, or that \(\sigma = \mathrm{ReLU}^{m+1}\) with \(m \ge 2\). Assume further that the weight networks \(\phi_{\theta^i},\phi_{\bar \theta^i},\phi_{\tilde \theta^i},\phi_{\hat \theta^i},\phi_{\check \theta^i}\) satisfy Assumption \ref{con:weight}.
		
		Then, for any \(\delta > 0\), and sufficiently large \(\mathcal{W}, \mathcal{L} \in \mathbb{N}\) such that \(\log_2(\mathcal{W}) \le \mathcal{L}\), there exist neural networks \(u_{\theta}\) and \(f_{\bar{\theta}}\), constructed using \(\sigma\), with width \(C_1 \mathcal{W}\log \mathcal{W}\) and depth \(C_2 \mathcal{L}\log \mathcal{L}\), respectively, such that
		\begin{align*}
			\sup_{(\theta^i)_{i=1}^{12},(\bar{\theta}^i)_{i=1}^d,(\tilde{\theta}^i)_{i=1}^{d},(\hat{\theta}^i)_{i=1}^{2d},(\check{\theta}^i)_{i=1}^{2d}}\|\bar{\mathcal{R}}_Q^{2}\|^2_{Q}+\lambda\|\bar{\mathcal{R}}_\Gamma^{2}\|^2_{\Gamma}-\rho\|\mathcal R_{NN}^{2}\|_{NN}^2
			\le C_3 (\mathcal{W}^{-\frac{1}{d+1}}\mathcal{L}^{-\frac{1}{d+1}} + \delta)^2.
		\end{align*}
		Here, \(C_i\) for \(i=1,2,3\) are constants independent of \(\mathcal{W}\), \(\mathcal{L}\), and \(\delta\). The residuals \(\bar{\mathcal{R}}_Q^{2}\), \(\bar{\mathcal{R}}_\Gamma^{2}\), and \(\mathcal{R}_{NN}^{2}\) are defined in \eqref{eq:barRQ2}, \eqref{eq:barRG2}, and \eqref{eq:RNN2}, respectively.
		\label{thm:errorWave}
	\end{Theorem}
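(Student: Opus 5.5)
We follow the proof of Theorem~\ref{thm:errorHeat} closely, with \(\partial_t\) replaced by \(\partial_{tt}\) in the interior residual and with the richer boundary, initial and terminal terms of \eqref{eq:barRG2} treated one at a time.

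\textbf{Step 1: smooth approximants.} Extend \(u\) by \(E_1\) and \(f\) by \(E_2\), and mollify to obtain \(u_\varepsilon,f_\varepsilon\). Fix \(\varepsilon\) so that \(\|u-u_\varepsilon\|_{H^2(Q_T)}+\|f-f_\varepsilon\|_{L^2(q_T)}\le\delta\). The Young-type bounds \eqref{eq:YoungW1} and \eqref{eq:YoungW2} give \(W^{3,\infty}\) and \(W^{2,\infty}\) control, with the same powers of \(\varepsilon\) as before.

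\textbf{Step 2: network approximation.} Invoke \cite[Theorem 25]{yang2025deepneuralnetworksgeneral} as in \eqref{eq:udiff}--\eqref{eq:fdiff}. Choose \(\mathcal W,\mathcal L\) with \(\mathcal W^{1/(d+1)}\mathcal L^{1/(d+1)}\ge\varepsilon^{-1-(d+1)/2}\). This gives networks \(u_\theta,f_{\bar\theta}\) with
\[
\|u_\theta-u_\varepsilon\|_{W^{2,\infty}(Q_T)}+\|f_\varepsilon-f_{\bar\theta}\|_{L^2(q_T)}\le\tilde C_0\mathcal W^{-\frac1{d+1}}\mathcal L^{-\frac1{d+1}}.
\]

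\textbf{Step 3: interior residual.} Use the exact equation \(\partial_{tt}u-\Delta u+v(u)=f\mathbf 1_\omega\) to subtract the zero quantity. The residual \(\bar{\mathcal R}_Q^2(u_\theta,f_{\bar\theta})\) then splits into two parts:
\begin{itemize}
\item weighted errors \(\phi(\partial_{tt}(u_\theta-u)-\dots)\), bounded by \(M(1+\mathcal L_v)|Q_T|^{-1/2}(\tilde C_0\mathcal W^{-1/(d+1)}\mathcal L^{-1/(d+1)}+\delta)\) using Assumption~\ref{con:weight} and \(\|\partial_{tt}\cdot\|_{L^2}\le\|\cdot\|_{H^2}\);
\item \(d+3\) terms \(\|\phi-1\|_Q\) multiplied by \(\|\partial_{tt}u\|_\infty\), \(\|\partial_{x_ix_i}u\|_\infty\), \(\|v(u)\|_\infty\) and \(\|f\|_\infty\).
\end{itemize}
Squaring with \((\sum_{k=1}^{d+4}a_k)^2\le(d+4)\sum a_k^2\) and using \(\rho\ge(d+4)\max\{\cdots\}\) absorbs the weight deviations into \(\rho\|\mathcal R^2_{NN}\|^2_{NN}\), exactly as in \eqref{eq:est_R1Q}.

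\textbf{Step 4: initial and terminal terms.} Each term has the form \(\|\phi_a w_\theta-\phi_b w\|_{L^2(\Omega)}\), where \((w_\theta,w)\) is one of
\[
(u_\theta(0),u_0),\ (\partial_tu_\theta(0),u_1),\ (\partial_{x_i}u_\theta(0),\partial_{x_i}u_0),
\]
or the analogous pairs at \(t=T\). Split each term as \(M\|w_\theta-w\|+\|w\|_{L^\infty}(\|\phi_a-1\|+\|\phi_b-1\|)\), as in \eqref{eq:est_init}. The hypothesis \(\rho\ge3\lambda\|u_0\|^2_{W^{1,\infty}}\) (and its analogues for \(u_1,z_0,z_1\)) absorbs the weight parts. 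This is where the \(W^{1,\infty}\times L^\infty\) regularity of the data is needed: it bounds \(\|\partial_{x_i}u_0\|_{L^\infty}\).

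The remaining quantities \(\|w_\theta-w\|_{L^2(\Omega)}\) involve \(u_\theta-u\) and its first derivatives on the slices \(t=0,T\). They are controlled by the trace theorem, \(\|u_\theta-u\|_{H^1(\{t=0,T\}\times\Omega)}\le C\|u_\theta-u\|_{H^2(Q_T)}\).

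\textbf{Step 5: lateral boundary terms (main obstacle).} These are the \(H^{1/2}(\Sigma_T)\) norms of \(\phi_{\tilde\theta}\partial_tu_\theta\) and \(\phi_{\tilde\theta}\partial_{x_i}u_\theta\) (or \(\partial_{\sigma}u_\theta\)), together with \(\|\phi_{\theta^4}u_\theta\|_{L^2(\Sigma_T)}\). Unlike the heat case, they involve first derivatives in \(H^{1/2}\). I would use \(u=0\) on \(\Sigma_T\), so that the tangential derivatives \(\partial_tu\) and \(\partial_\sigma u\) vanish there. Then \(\partial_tu_\theta=\partial_t(u_\theta-u)\) on \(\Sigma_T\), and similarly for \(\partial_\sigma u_\theta\).

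Two tools then bound these terms:
\begin{itemize}
\item Assumption~\ref{con:weight} (bounds on \(\phi\) and its first derivatives) gives \(\|\phi g\|_{H^{1/2}(\Sigma_T)}\le CM\|g\|_{H^{1/2}(\Sigma_T)}\), by interpolating the \(L^2\) and \(H^1\) multiplier bounds;
\item the trace estimate \(\|\nabla(u_\theta-u)\|_{H^{1/2}(\Sigma_T)}\le C\|u_\theta-u\|_{H^2(Q_T)}\).
\end{itemize}
Together these give \(O(\mathcal W^{-1/(d+1)}\mathcal L^{-1/(d+1)}+\delta)\) without any weight-deviation terms.

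The delicate point is the normal component of \(\partial_{x_i}u\), which need not vanish on \(\partial\Omega\) if one uses the Cartesian form written in \eqref{eq:barRG2}. I would therefore work with the tangential-coordinate version from \eqref{eq:losswave}, where only \(\partial_t\) and \(\partial_{\sigma}\) appear. If needed, one can instead strengthen \(H^2\) to \(W^{2,\infty}\cap H^2\) control from Step 2 to handle this component.

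Summing the bounds from Steps 3--5 yields the claimed estimate, uniformly over all weight parameters.
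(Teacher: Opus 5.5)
Your proposal is correct and follows the paper's proof essentially step for step. The paper likewise mollifies and applies the network approximation result, then absorbs the weight deviations through the $(d+4)$ and $3\lambda$ conditions on $\rho$ (including the $\partial_{x_i}u_0,\partial_{x_i}z_0$ terms via the $W^{1,\infty}$ norms). It bounds the lateral terms by $CM^2\|u_\theta\|^2_{H^{3/2}(\Sigma_T)}=CM^2\|u_\theta-u\|^2_{H^{3/2}(\Sigma_T)}$ followed by a trace theorem, which is your Step~5. You are also right that this works only in the tangential form of \eqref{eq:losswave}, which is the form the paper's proof actually uses.

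Drop your fallback remark, however. In the Cartesian form of \eqref{eq:barRG2}, choosing the weights close to $1$ costs essentially nothing in the penalty and leaves $\lambda|\Gamma|^{-1}\sum_i\|\partial_{x_i}u_\theta\|^2_{H^{1/2}(\Sigma_T)}\to\lambda|\Gamma|^{-1}\sum_i\|\nu_i\,\partial_\nu u\|^2_{H^{1/2}(\Sigma_T)}$. This limit is generally nonzero, so no stronger approximation norm can make that term small.
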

	
	\begin{proof}
		The proof of Theorem \ref{thm:errorWave} is analogous to that of Theorem \ref{thm:errorHeat}. For brevity, we only list the modified parts of the proof:
		\begin{enumerate}
			\item The time derivative \(\partial_t\) is modified into \(\partial_{tt}\).
			\item Repeat the estimate \eqref{eq:est_init} for \(u_1,z_1\) and all directional derivatives of \(u_0,z_0\).
			\item The boundary is estimated by
				\begin{align*}
					\|\phi_{\theta^4}u_\theta\|_{L^2(\Sigma_T)}^2+\|\phi_{\tilde\theta^1}\partial_tu_\theta\|^2_{H^{1/2}(\Sigma_T)}+\sum_{i=2}^{d}\bigl\|\phi_{\tilde\theta^i}\frac{\partial u_\theta}{\partial \sigma_{i-1}}\bigr\|^2_{H^{1/2}(\Sigma_T)}
					\le CM^2\|u_\theta\|^2_{H^{3/2}(\Sigma_T)}.
				\end{align*}
				\item We use the Trace Theorem for fractional Sobolev space \cite[Theorem 3.37]{McLean2000}, which yields
					\begin{align*}
						\|u_\theta-u\|^2_{H^{3/2}(\Gamma)}\le \tilde C\|u_\theta-u\|_{H^2(Q_T)}^2.
					\end{align*}
		\end{enumerate}
	\end{proof}
	
	\begin{Remark}
		Similar to Theorem \ref{thm:errorHeat}, we assume \( (u_0,u_1),(z_0,z_1)\in W^{1,\infty}(\Omega)\times L^{\infty}(\Omega)\) and \(u\in W^{2,\infty}(Q_T),f\in L^{\infty}(q_T)\) for Theorem \ref{thm:errorWave}.
		
		In general, a weak solution of wave equation \eqref{Wave1} satisfies the regularity
		\[u \in C([0,T];H^1(\Omega))\cap C^1([0,T];L^2(\Omega))\]
		and the control \(f\in L^2(q_T)\) (see \cite{fu_yong_zhang_2007,Cazenave1980}).
	\end{Remark}
	
	\begin{Remark}
		The weight \(\phi \equiv 1\) satisfies Assumption \ref{con:weight}. In this case, the WeightedPINN loss reduces to the standard PINN loss. Therefore, Theorem \ref{thm:errorHeat} and Theorem \ref{thm:errorWave} include the standard PINN as a special case.
	\end{Remark}

	Theorem \ref{thm:errorHeat} and Theorem \ref{thm:errorWave} establish the existence of deep neural networks that approximate both the state and the control given sufficient regularity. In particular, the results guarantee that the total WeightedPINN loss can be made arbitrarily small, provided the network width and depth are chosen sufficiently large.

	Now, let us further assume a multiplicative structure on \eqref{Heat1}. For the heat equation, we assume
	\begin{enumerate}[label=(V\arabic*),series=V]
		\item\label{con:Vheat} There exists \(V\in L^{\infty}(Q_T)\) such that the control problem on the heat equation under consideration is
			\begin{align*}
				\partial_t u - \Delta u + V u &= f \mathbf 1_{\omega} \quad \text{in }Q_T,\\
				u &= 0 \quad \text{on }\Sigma_T,\\
				u(0) &= u_0 \quad \text{in } \Omega,\\
				u(T) &= z_0 \quad \text{in } \Omega.
			\end{align*}
	\end{enumerate}
	In other words, we simply replace \(v(u)\) in Equation \eqref{Heat1} and the Residual \eqref{eq:barRQ1} by \(V u\).

	In addition to Theorem \ref{thm:errorHeat}, we show in Theorem \ref{thm:error_estHeat} that, under the multiplicative structure, if we can find the neural networks \(u_\theta\) and \(f_{\bar{\theta}}\) with sufficiently small WeightedPINN loss, then these networks approximate an admissible pair of solution and control of the PDE.

	For brevity, we denote the space of admissible pairs of solution and control associated with heat equation with given initial and terminal conditions \( u_0,z_0\in L^2(\Omega)\) by \(\mathcal A_{u_0}^{z_0}\).

	As discussed in Section \ref{Sec:internalsettings}, \(\mathcal A_{u_0}^{z_0}\) is not guaranteed to be non-empty. We assume \(\mathcal A_{u_0}^{z_0}\) to be non-empty. This also implies that \(z_0\) is reachable with some control \(f\mathbf 1_\omega\). If \(f_{\bar\theta}\) approximates \(f\), then we can expect to have an approximate control from \(u_0\) to \(z_0\). We only keep the terminal \(z_0\) to be exact. To make it precise, in this work, for \(V\) in \ref{con:Vheat}, we assume:
\begin{enumerate}[label=(A\arabic*),resume=con]
\item \label{con:heat}
	For given neural network \(f_{\bar \theta}\) and \(\delta>0\), there exist
\(F_\delta\in L^2(q_T),u_\delta \in L^2(\Omega)\) and
\[
\bar u\in C([0,T];L^2(\Omega))\cap L^2(0,T;H^1_0(\Omega))
\]
such that
\[
	\|F_\delta\|_{L^2(q_T)}^2 + \|u_\delta\|_{L^2(\Omega)}^2\le \delta,
\]
and
\begin{align*}
\partial_t\bar u-\Delta\bar u+V\bar u
&=
(f_{\bar\theta}+F_\delta)\mathbf 1_\omega
\quad\text{in }Q_T,\\
\bar u &= 0 \quad\text{on }\Sigma_T,\\
\bar u(0)&=u_0+u_\delta \quad\text{in }\Omega,\\
\bar u(T)&=z_0\quad\text{in }\Omega.
\end{align*}
\end{enumerate}

We note that \(\mathcal A_{u_0}^{z_0}\neq\emptyset\) and the fact that \(f_{\bar\theta}\) approximates a control \(f\) do not directly implies \ref{con:heat}. Here, it stays as a structural hypothesis. We do not claim \ref{con:heat} is a necessary condition.

Now, let \(\bar u\in C([0,T];L^2(\Omega))\cap L^2(0,T;H^1(\Omega))\) and \(f\in L^2(q_T)\), the distance of \( (\bar u,\bar f)\) to \(\mathcal A_{u_0}^{z_0}\) is given by
	\begin{align*}
	{\rm dist}_h( (\bar u,\bar f),\mathcal A_{u_0}^{z_0}):= \inf_{(u,f)\in \mathcal{A}_{u_0}^{z_0}}\Big(\max_{t\in [0,T]}\| u(t)-\bar u(t)\|^2_{L^2(\Omega)}
			+\|u-\bar u\|^2_{L^2(0,T;H^1(\Omega))}
	+\|f-\bar f\|^2_{L^2(q_T)}\Big)^{1/2}.
	\end{align*}

	\begin{Theorem}
		Consider the control problem on the heat equation in \ref{con:Vheat} and let \(\mathcal A_{u_0}^{z_0}\) be the set of admissible pair for the given initial and terminal conditions \( u_0,z_0\in L^2(\Omega)\). Suppose that \(\mathcal A_{u_0}^{z_0}\) is non-empty.
		
		Assume that there exist \(u_\theta\) and \(f_{\bar{\theta}}\), constructed using activation function \(\sigma\in W^{2,\infty}_{\rm loc}(\mathbb{R})\), such that \(f_{\bar \theta}\) and a given constant \(\delta>0\) satisfy Assumption \ref{con:heat} and
		\begin{align*}
			\sup_{(\theta^i)_{i=1}^8,(\bar\theta^i)_{i=1}^d}
			\|\bar{\mathcal{R}}_Q^{1}\|_{Q}^2
			+\lambda\|\bar{\mathcal{R}}_{\Gamma}^{1}\|_{\Gamma}^2
			-\rho\|\mathcal{R}_{NN}^{1}\|_{NN}^2
			\le \delta ,
		\end{align*}
		where \(\bar{\mathcal{R}}_{Q}^{\,1},\bar{\mathcal{R}}_{\Gamma}^{\,1}\) and
		\(\mathcal{R}_{NN}^{\,1}\) are defined in \eqref{eq:barRQ1},
		\eqref{eq:barRG1} and \eqref{eq:RNN1}, respectively.  
		The constants \(\lambda,\rho>0\) are given, and the weight networks \(\phi_{\theta^{i}},\phi_{\bar\theta^{i}}\) belong to a function class that contains the constant function \(1\).
		
		Then, there exists a constant \(C>0\), depending only on \(Q_T\), \(\|V\|_{L^\infty(Q_T)}\), and \(\lambda\) such that
		\begin{align*}	
			{\rm dist}_h( (u_\theta,f_{\bar\theta}),\mathcal{A}_{u_0}^{z_0})\le C \delta^{1/2}.
		\end{align*}
		\label{thm:error_estHeat}
	\end{Theorem}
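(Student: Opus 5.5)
First I choose all weight networks identically equal to \(1\), which is allowed because the weight class contains the constant \(1\). The penalty \(\|\mathcal R^1_{NN}\|_{NN}^2\) then vanishes. The weighted residuals reduce to the standard PINN residuals, with \(v(u)\) replaced by \(Vu\). The hypothesis therefore gives
\[
\frac{1}{|Q_T|}\|\partial_t u_\theta-\Delta u_\theta+Vu_\theta-f_{\bar\theta}\mathbf 1_\omega\|^2_{L^2(Q_T)}+\frac{\lambda}{|\Gamma|}\Big(\|u_\theta\|^2_{H^{1/2}(\Sigma_T)}+\|u_\theta(0)-u_0\|^2_{L^2(\Omega)}+\|u_\theta(T)-z_0\|^2_{L^2(\Omega)}\Big)\le\delta .
\]
Denote the interior residual by \(r:=\partial_t u_\theta-\Delta u_\theta+Vu_\theta-f_{\bar\theta}\mathbf 1_\omega\). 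Then \(\|r\|_{L^2(Q_T)}^2\), \(\|u_\theta\|_{H^{1/2}(\Sigma_T)}^2\), \(\|u_\theta(0)-u_0\|^2_{L^2(\Omega)}\) and \(\|u_\theta(T)-z_0\|^2_{L^2(\Omega)}\) are all bounded by \(C(Q_T,\lambda)\,\delta\). The regularity \(\sigma\in W^{2,\infty}_{\rm loc}\) makes \(u_\theta\) a classical \(C^1\)-in-time, \(W^{2,\infty}\)-in-space function, so all these quantities make sense.

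Next I build a comparison admissible pair using Assumption~\ref{con:heat}. It gives \(\bar u\) with control \(f_{\bar\theta}+F_\delta\), initial datum \(u_0+u_\delta\) and exact terminal state \(z_0\). I cannot compare with \((\bar u,f_{\bar\theta}+F_\delta)\) directly, since it is not in \(\mathcal A_{u_0}^{z_0}\). Instead I use linearity of the state equation under \ref{con:Vheat}. I pick any \((u^*,f^*)\in\mathcal A_{u_0}^{z_0}\) and write \(e:=\bar u-u^*\). Then \(e\) solves the linear controlled problem from \(u_\delta\) to \(0\) with control \(g:=f_{\bar\theta}+F_\delta-f^*\). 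The admissible set is an affine space, so a cleaner route is to let \(w\) solve \(\partial_t w-\Delta w+Vw=F\mathbf 1_\omega\), \(w=0\), \(w(0)=-u_\delta\), \(w(T)=0\), using a control \(F\) steering \(-u_\delta\) to zero. Then \((\bar u+w,\ f_{\bar\theta}+F_\delta+F)\in\mathcal A_{u_0}^{z_0}\). The required bound \(\|F\|_{L^2(q_T)}\le C\|u_\delta\|_{L^2}\) is the null-controllability cost for the linear heat equation with bounded potential (Fernández-Cara–Zuazua), with constant depending on \(Q_T\) and \(\|V\|_\infty\). Energy estimates then give \(\|w\|\le C\delta^{1/2}\) in the \({\rm dist}_h\) norm. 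It therefore suffices to bound \((u_\theta-\bar u,\ F_\delta)\), plus \(F\) and \(w\), by \(C\delta^{1/2}\).

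Then I estimate \(e:=u_\theta-\bar u\). It satisfies
\[
\partial_t e-\Delta e+Ve=r-F_\delta\mathbf 1_\omega \ \text{in } Q_T,\qquad e=u_\theta \ \text{on }\Sigma_T,\qquad e(0)=u_\theta(0)-u_0-u_\delta .
\]
I split \(e=e_1+e_2\). Here \(e_2\) is a lifting of the nonhomogeneous boundary datum \(u_\theta|_{\Sigma_T}\), and \(e_1\) has zero boundary values. The standard parabolic energy estimate (multiply by \(e_1\), use Gronwall with constant \(e^{C\|V\|_\infty T}\)) gives
\[
\max_t\|e_1(t)\|^2_{L^2}+\|e_1\|^2_{L^2(H^1)}\le C\big(\|e_1(0)\|^2+\|r\|^2+\|F_\delta\|^2+\text{lift terms}\big)\le C\delta .
\]

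The main obstacle is the boundary lifting. Only \(\|u_\theta\|_{H^{1/2}(\Sigma_T)}\) is controlled, and I need a lifting \(e_2\) with \(\|e_2\|_{C(L^2)\cap L^2(H^1)}\) and \(\|\partial_t e_2-\Delta e_2\|_{L^2(H^{-1})}\) bounded by \(C\|u_\theta\|_{H^{1/2}(\Sigma_T)}\). Remark~\ref{rmk:H1bound} signals that this norm dominates the anisotropic parabolic trace space \(L^2(H^{1/2}(\partial\Omega))\cap H^{1/4}(L^2(\partial\Omega))\). I would invoke the Lions–Magenes lifting theorem for that trace space, which yields \(e_2\in L^2(0,T;H^1)\cap H^{1/2}(0,T;L^2)\) with the desired bound and continuity in time. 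Combining everything with the triangle inequality gives \({\rm dist}_h\big((u_\theta,f_{\bar\theta}),\mathcal A_{u_0}^{z_0}\big)\le C\delta^{1/2}\).
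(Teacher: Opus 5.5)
Your proposal follows the paper's proof essentially step for step. You set the weights to $1$, use Assumption \ref{con:heat} to produce $\bar u$, bound $u_\theta-\bar u$ by $C\delta$ via a boundary lifting plus an energy/Gr\"onwall estimate, and use a null control for the linear heat equation with bounded potential to steer $-u_\delta$ to $0$. Your $F$ is the paper's $-F_\delta+g$, so your corrected control $f_{\bar\theta}+F_\delta+F$ is exactly the paper's $f_{\bar\theta}+g$.

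One step needs tightening, and the paper also passes over it quickly. A generic lifting in $L^2(0,T;H^1(\Omega))\cap H^{1/2}(0,T;L^2(\Omega))$ is neither continuous into $L^2(\Omega)$ nor satisfies $\partial_t e_2-\Delta e_2\in L^2(0,T;H^{-1}(\Omega))$; adding $a(t)\varphi(x)$ with $\varphi$ compactly supported and $a\in H^{1/2}(0,T)$ unbounded breaks both. You should therefore take $e_2$ to be the caloric extension of $u_\theta|_{\Sigma_T}$ with zero initial datum. For that choice $\partial_t e_2-\Delta e_2=0$, and the $C([0,T];L^2(\Omega))$ bound in terms of the $L^2(H^{1/2}(\partial\Omega))\cap H^{1/4}(L^2(\partial\Omega))$ norm holds.
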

	
	\begin{Remark}
		In Theorem \ref{thm:error_estHeat}, we assume the existence of neural networks \(u_\theta\) and \(f_{\bar{\theta}}\) that drive the loss arbitrarily small. Therefore, in this case, we do not require the higher regularity assumptions \(u_0,z_0\in L^\infty(\Omega)\) or  \(u \in W^{2,\infty}(Q_T),f\in L^\infty(q_T)\) as in Theorem \ref{thm:errorHeat}.
	\end{Remark}
	\begin{Remark}
		We manage to derive an error estimate of the neural networks to the space of admissible pairs for multiplicative structure. The general case, where \(v\) is non-linear, is a challenging problem and remains open. 
	\end{Remark}

	\begin{proof}
		Since the class of weight networks contains the constant function \(1\), we may set \(\phi_{\theta^i},\phi_{\bar{\theta}^i}\equiv 1\). The hypothesis therefore reduces to
		\begin{align*}
			\|\mathcal{R}_Q^1(u_\theta,f_{\bar{\theta}})\|_{Q}^2
			+\lambda\|\mathcal{R}_\Gamma^1(u_\theta)\|_\Gamma^2
			\le\delta.
		\end{align*}
		
		We have function \(V\) such that \(v(u)=V u\) as in Assumption \ref{con:Vheat}.

		By Assumption \ref{con:heat}, let \(\bar u\in C\bigl([0,T];L^{2}(\Omega)\bigr)\cap L^{2}\bigl(0,T;H^{1}(\Omega)\bigr)\) be the weak solution of  
\begin{align*}
\partial_t\bar u-\Delta\bar u+V\bar u
&=
(f_{\bar\theta}+F_\delta)\mathbf 1_\omega
\quad\text{in }Q_T,\\
\bar u &= 0 \quad\text{on }\Sigma_T,\\
\bar u(0)&=u_0+u_\delta \quad\text{in }\Omega,\\
\bar u(T)&=z_0\quad\text{in }\Omega.
\end{align*}
for some \(F_\delta,u_\delta\) satisfying \[\|F_\delta\|_{L^2(q_T)}^2+\|u_\delta\|_{L^2(\Omega)}^2\le\delta.\]
		
		By \cite[Chapter 4, Equation (15.38)]{LionsMagenesVol2}, there exist constant \(\tilde C(Q_T)\) and \(w\in L^2(0,T;H^1(\Omega))\cap H^{1}(0,T;H^{-1}(\Omega))\) such that 
		\[w(t)|_{\partial \Omega}=u_\theta(t)|_{\partial \Omega}\]
		and
		\[\|w\|^2_{L^2(0,T;H^1(\Omega))}+\|\partial_t w\|^2_{L^{2}(0,T;H^{-1}(\Omega))}\le \tilde C(Q_T)(\|u_\theta\|^2_{L^2(0,T;H^{1/2}(\partial\Omega))}+\|u_\theta\|^2_{H^{1/4}(0,T;L^{2}(\partial\Omega))}).\]

		We note that, for compatibility, we do not prescribe the initial or terminal condition for \(w\).

We have
		\begin{align}
			\|w\|^2_{L^2(0,T;H^1(\Omega))}+\|\partial_t w\|^2_{L^{2}(0,T;H^{-1}(\Omega))}
			&\le \tilde C(Q_T)\|u_\theta\|^2_{H^{1/2}(\Sigma_T)}.\label{eq:H-1/2trace}
	\end{align}
		
			We denote \(\hat u := u_{\theta}-\bar u-w\). Using the previous identity, \(\hat u\) is the weak solution of the following equation 
		\begin{align*}
			\partial_{t}\hat u-\Delta\hat u+V\hat u
			&= R-F_\delta\mathbf 1_\omega-(\partial_t w-\Delta w+V w)
			\quad\text{in }Q_{T},\\
			\hat u &= 0 \quad\text{on }\Sigma_{T},\\
			\hat u(0)
			&= u_\theta(0)-u_0-u_\delta-w(0)
			\quad\text{in }\Omega,
		\end{align*}
		where
		\[
			R:=\partial_{t}u_{\theta}-\Delta u_{\theta}+V u_{\theta}-f_{\bar\theta}\mathbf{1}_{\omega}
			\quad\text{in }Q_T.
		\]

		Taking test function \(\hat u\) and integrating by parts yield
		\begin{align*}
		\frac12\partial_t\|\hat u\|_{L^{2}(\Omega)}^{2}
		+\|\nabla\hat u\|_{L^{2}(\Omega)}^{2}
		&=\int_{\Omega}(R-F_\delta\mathbf 1_\omega-V\hat u)\hat u\, d x\\
		&\quad-\int_{\Omega}\partial_t w\hat u\, d x-\int_{\Omega} \nabla w\cdot \nabla\hat u\, d x-\int_{\Omega}V w \hat u\, d x.
	\end{align*}

		By Cauchy–Schwarz’s inequalities,
		\[
			\int_{\Omega}(R-F_\delta\mathbf 1_\omega-V\hat u)\hat u\, d x
			\le\|R\|_{L^{2}(\Omega)}^{2}+\delta+\frac{1}{2}\|\hat u\|_{L^{2}(\Omega)}^{2}
		+\|V\|_{L^{\infty}(Q_T)}\|\hat u\|_{L^{2}(\Omega)}^{2}\bigr).
		\]

		We also have
		\begin{align*}
		\Big|\int_{\Omega}\partial_t w\hat u\, d x+\int_{\Omega} \nabla w\cdot \nabla\hat u\, d x+\int_{\Omega}V w \hat u\, d x\Big|
		&\le \|\partial_t w\|^2_{H^{-1}(\Omega)}+\|\nabla w\|^2_{L^2(\Omega)}\\
		&\quad\quad+\|V\|^2_{L^\infty(Q_T)}\|w\|^2_{L^2(\Omega)}+\frac{1}{2}\|\hat u\|^2_{H^1(\Omega)}
	\end{align*}

		Let \(\mathcal L_v:= \|V\|_{L^\infty(Q_T)}\), there exists a constant \(C_{0}=C_{0}(Q_{T},\mathcal L_{v})>0\) such that
		\begin{equation}\label{eq:energy}
			\partial_t\|\hat u\|_{L^{2}(\Omega)}^{2}
			+\|\hat u\|_{H^{1}(\Omega)}^{2}
			\le C_{0}\bigl(\|\hat u\|_{L^{2}(\Omega)}^{2}+\|R\|_{L^{2}(\Omega)}^{2}
			+\|\partial_t w\|^2_{H^{-1}(\Omega)}+\|w\|^2_{H^1(\Omega)}+\delta\bigr).
		\end{equation}
		
		Applying Gr\"onwall’s inequality to \eqref{eq:energy} gives, for any \(t\in[0,T]\),
		\begin{align*}
			\|\hat u\|_{L^{2}(\Omega)}^{2}
			&\le e^{C_{0}t}\Bigl(
				\|\hat u(0,\cdot)\|_{L^{2}(\Omega)}^{2}
				-\int_{0}^{t}\|\hat{u}\|^2_{H^1(\Omega)}ds\nonumber\\
			&\quad+C_{0}\!\int_{0}^{t}\!\bigl(\|R\|_{L^{2}(\Omega)}^{2}
			+\|\partial_t w\|^2_{H^{-1}(\Omega)}+\|w\|^2_{H^1(\Omega)}+\delta\bigr)\, ds\Bigr)\\
			&\le e^{C_{0}T}\Bigl(
			C_{0}\|R\|_{L^{2}(Q_{T})}^{2}
			+C_{0}\|w\|^2_{L^2(0,T;H^1(\Omega))}
			+C_0\|\partial_t w\|^2_{L^2(0,T;H^{-1}(\Omega))}+(C_0T+3)\delta\\
		&\qquad\quad+3\|u_\theta(0)-u_0\|^2_{L^2(\Omega)}+3\|w(0)\|^2_{L^2(\Omega)}\Bigr)-e^{C_{0}t}\int_{0}^{t}\|\hat{u}\|^2_{H^1(\Omega)}ds.
		\end{align*}
		
		Consequently, there exists a constant \(C_{1}=C_{1}(Q_{T},\mathcal L_{v})>0\) such that
		\begin{align}\label{eq:hat_u_est1}
			&\max_{t\in[0,T]}\|u_\theta(t)-\bar u(t)\|_{L^2(\Omega)}^2
			+\|u_\theta-\bar u\|_{L^2(0,T;H^1(\Omega))}^2\\
			&\le 2\Big(\sup_{t\in[0,T]}\|\hat u(t)\|_{L^{2}(\Omega)}^{2}
			+\|\hat u\|_{L^{2}(0,T;H^{1}(\Omega))}^{2}
		+\sup_{t\in[0,T]}\| w(t)\|_{L^{2}(\Omega)}^{2}
			+\|w\|_{L^{2}(0,T;H^{1}(\Omega))}^{2}\Big)\nonumber\\
			&\le C_{1}\Bigl(
			\|R\|_{L^{2}(Q_{T})}^{2}
			+\|u_{\theta}(0,\cdot)-u_{0}\|_{L^2(\Omega)}^2+\delta\nonumber\\
			&\quad+\sup_{t\in[0,T]}\|w(t)\|_{L^{2}(\Omega)}^{2}
			+\|w\|^2_{L^2(0,T;H^1(\Omega))}
		+\|\partial_t w\|^2_{L^2(0,T;H^{-1}(\Omega))}\Bigr).\nonumber
		\end{align}

		We note that \[ |\partial_t \|w\|_{L^2(\Omega)}^2| = \Big|\int_{\Omega}w \partial_t w\, d x\Big|\le \frac{1}{2}\big(\|w\|_{H^1(\Omega)}^2+\|\partial_t w\|_{H^{-1}(\Omega)}^2\big).\]

		Hence,
		\[
			\|w(t)\|_{L^2(\Omega)}^2\le \|w(s)\|_{L^2(\Omega)}^2
			+ \frac{1}{2}(\|w\|^2_{L^2(0,T;H^1(\Omega))}+\|\partial_t w\|^2_{L^2(0,T;H^{-1}(\Omega))}).
		\]
		for all \(s,t\). For any \(\varepsilon>0\), we take \(s_\varepsilon\) such that \(\|w(s_\varepsilon)\|_{H^1(\Omega)}^2<\inf_{s\in[0,T]}\|w(s)\|_{H^1(\Omega)}^2+\varepsilon\). At time \(s_\varepsilon\), we have
		\[\|w(s_\varepsilon)\|_{L^2(\Omega)}^2\le\|w(s_\varepsilon)\|_{H^1(\Omega)}^2\le \frac{1}{T}\|w\|^2_{L^2(0,T;H^1(\Omega))}+\varepsilon.\]
		Let \(\varepsilon\to 0\) and taking supremum over \(t\), there exists constant \(C_2=C_2(Q_T)>0\) such that
		\begin{align}\label{eq:embedding}
			\sup_{t\in[0,T]}\|w(t)\|^2_{L^2(\Omega)}\le C_2(\|w\|^2_{L^2(0,T;H^1(\Omega))}+\|\partial_t w\|^2_{L^2(0,T;H^{-1}(\Omega))}).
		\end{align}

		Combining with \eqref{eq:H-1/2trace}, \eqref{eq:hat_u_est1} and \eqref{eq:embedding}, there exists \(C_3 = C_3(Q_T,\mathcal L_v)>0\) such that
		\begin{align}\label{eq:hat_u_est2}
		\max_{t\in[0,T]}\|u_\theta(t)-\bar u(t)\|_{L^2(\Omega)}^2
			+\|u_\theta-\bar u\|_{L^2(0,T;H^1(\Omega))}^2
			&\le C_3 \delta.
	\end{align}
		
By the null controllability of the linear heat equation established through global Carleman estimates in \cite[Theorem~1.6 and its proof]{fernandez-cara_guerrero_2006}, there exist $g\in L^2(q_T)$ and
$\xi\in C([0,T];L^2(\Omega))\cap L^2(0,T;H^1_0(\Omega))$ such that
\begin{equation}\label{eq:xi}
	\partial_t\xi-\Delta\xi+V\xi=(-F_\delta+g)\mathbf 1_\omega
\text{ in }Q_T,\quad
\xi|_{\Sigma_T}=0,\quad
\xi(0)=-u_\delta,\quad
\xi(T)=0,
\end{equation}
together with a constant $C_4=C_4(Q_T,\mathcal L_v)>0$ satisfying
\begin{equation}\label{eq:g_est}
	\|g\|_{L^2(q_T)}\le \|F_\delta\|_{L^2(q_T)}+\|-F_\delta+g\|_{L^2(q_T)}\le \sqrt\delta + C_4\|u_\delta\|_{L^2(\Omega)}\le (1+C_4)\sqrt\delta.
\end{equation}
The standard energy estimate applied to \eqref{eq:xi} yields
\begin{equation}\label{eq:xi_est}
\max_{t\in[0,T]}\|\xi(t)\|_{L^2(\Omega)}^{2}
+\|\xi\|_{L^2(0,T;H^1(\Omega))}^{2}
\le C_5\bigl(\|-F_\delta+g\|_{L^2(q_T)}^{2}+\|u_\delta\|^2_{L^2(\Omega)}\bigr)\le C_6\delta
\end{equation}
for $C_5,C_6>0$ depending only on $Q_T$ and $\mathcal L_v$.

Define
\[
u:=\bar u+\xi,\qquad
f:=f_{\bar\theta}+g.
\]
Adding the equations for $\bar u$ and $\xi$,
\[
\partial_t u-\Delta u+Vu
=(f_{\bar\theta}+F_\delta-F_\delta+g)\mathbf 1_\omega
=f\mathbf 1_\omega\text{ in }Q_T,
\]
together with $u|_{\Sigma_T}=0$, $u(0)=\bar u(0)+\xi(0)=u_0$, and
$u(T)=\bar u(T)+\xi(T)=z_0$. Hence $(u,f)\in\mathcal A_{u_0}^{z_0}$.
		
Combining \eqref{eq:hat_u_est2} and \eqref{eq:xi_est},
\begin{align*}
&\max_{t\in[0,T]}\|u_\theta(t)-u(t)\|_{L^2(\Omega)}^{2}
+\|u_\theta-u\|_{L^2(0,T;H^1(\Omega))}^{2}\\
&\qquad\le 2\bigl(\max_{t\in[0,T]}\|u_\theta(t)-\bar u(t)\|_{L^2}^{2}
+\max_{t\in[0,T]}\|\xi(t)\|_{L^2}^{2}\bigr)
+2\bigl(\|u_\theta-\bar u\|_{L^2(H^1)}^{2}+\|\xi\|_{L^2(H^1)}^{2}\bigr)\le C_7\delta,
\end{align*}
for some $C_7=C_7(Q_T,\mathcal L_v,\lambda)>0$. Therefore, there exists
$C=C(Q_T,\mathcal L_v,\lambda)>0$ such that
\[
\max_{t\in[0,T]}\|u_\theta(t)-u(t)\|_{L^2(\Omega)}^{2}
+\|u_\theta-u\|_{L^2(0,T;H^1(\Omega))}^{2}
+\|f_{\bar\theta}-f\|_{L^2(q_T)}^{2}\le C^2\delta,
\]
which yields
\[
\operatorname{dist}_h\!\bigl((u_\theta,f_{\bar\theta}),\mathcal A_{u_0}^{z_0}\bigr)
\le C\sqrt\delta.
\qedhere
\]
	\end{proof}

	We extend the result to the wave equation \eqref{Wave1}. 
	The multiplicative structure on the wave equation is written as follows:
\begin{enumerate}[label=(V\arabic*),resume=V]
		\item\label{con:Vwave} There exists \(V\in L^{\infty}(Q_T)\) such that the control problem on the wave equation is
			\begin{align*}
				\partial_{tt} u - \Delta u + V u &= f \mathbf 1_{\omega} \quad \text{in }Q_T,\\
				u &= 0 \quad \text{on }\Sigma_T,\\
				(u(0),\partial_t u(0)) &= (u_0,u_1) \quad \text{in } \Omega,\\
				(u(T),\partial_t u(T)) &= (z_0,z_1) \quad \text{in } \Omega.
			\end{align*}
	\end{enumerate}
	We replace \(v(u)\) in Equation \eqref{Wave1} and the Residual \eqref{eq:barRQ2} by \(V u\).

	The space of admissible pairs of solution and control associated with \eqref{Wave1} with given initial and terminal conditions \( (u_0,u_1),(z_0,z_1) \in H^1(\Omega)\times L^2(\Omega)\) is denoted by \(\mathcal A_{(u_0,u_1)}^{(z_0,z_1)}\).

	Let \(\bar u \in C([0,T];H^1(\Omega))\cap C^1([0,T];L^2(\Omega))\) and \(\bar f\in L^2(q_T)\), the distance of \( (\bar u,\bar f)\) to \(\mathcal A_{(u_0,u_1)}^{(z_0,z_1)}\) is given by
	\begin{align*}
		&{\rm dist}_w( (\bar u,\bar f),\mathcal A_{(u_0,u_1)}^{(z_0,z_1)})\\
		&\quad\quad:= \inf_{(u,f)\in \mathcal{A}_{(u_0,u_1)}^{(z_0,z_1)}}\Big(\max_{t\in [0,T]}\{\| u(t)-\bar u(t)\|^2_{H^1(\Omega)}
			+\|\partial_t u(t)-\partial_t \bar u(t)\|^2_{L^2(\Omega)}\}
	+\|f-\bar f\|^2_{L^2(q_T)}\Big)^{1/2}.
	\end{align*}

	We consider the following assumptions:
	\begin{enumerate}[label=(A\arabic*),resume=con]
		\item \label{con:wave} The domain \(\omega\subset\Omega\) contains a neighborhood of the boundary \(\partial \Omega\).
		\item \label{con:waveT} \(T > \text{diameter} (\Omega)\).
	\end{enumerate}
	
	\begin{Remark}\label{rmk:con_wave}
		Assumptions \ref{con:wave} and \ref{con:waveT} are sufficient for controllability and stability (see \cite[Theorem~4.1]{zuazua2024exactcontrollabilitystabilizationwave}). A sharp sufficient condition for the controllability of the wave equation, known as the \emph{Geometric Control Condition} (GCC), was introduced in \cite{BardosLebeauRauch1992}. We refer to \cite{LeRousseau2017,DehmanErvedozaZuazua2025} for more recent developments on the GCC. For readability, we adopt these simpler assumptions.
	\end{Remark}
	
	\begin{Theorem}
		Consider the control problem on the wave equation in \ref{con:Vwave} and let \(\mathcal A_{(u_0,u_1)}^{(z_0,z_1)}\) be the set of admissible pair for the given initial and terminal condition \( (u_0,u_1),(z_0,z_1)\in H^1(\Omega)\times L^2(\Omega)\).
			We assume that the conditions \ref{con:wave} and \ref{con:waveT} hold.

		Suppose there exist \(u_\theta\) and \(f_{\bar{\theta}}\), constructed using activation function \(\sigma\in W^{2,\infty}_{\rm loc}(\mathbb{R})\), such that
		\begin{align*}
			\sup_{(\theta^i)_{i=1}^{12},(\bar\theta^i)_{i=1}^d,(\tilde\theta^i)_{i=1}^d,(\hat\theta^i)_{i=1}^{2d},(\check\theta^i)_{i=1}^{2d}}
			\|\bar{\mathcal{R}}_Q^{2}\|_{Q}^2+\lambda\|\bar{\mathcal{R}}_{\Gamma}^{2}\|_{\Gamma}^2-\rho\|\mathcal{R}_{NN}^{2}\|_{NN}^2\le \delta,
		\end{align*}
		where \(\bar{\mathcal{R}}_Q^{2},\bar{\mathcal{R}}_{\Gamma}^{2}\), and \(\mathcal{R}_{NN}^{2}\) are defined in \eqref{eq:barRQ2}, \eqref{eq:barRG2}, and \eqref{eq:RNN2}, respectively. The constants $\delta,\lambda,\rho>0$ are given, and the weight networks \(\phi_{\theta^i},\phi_{\bar{\theta}^i},\phi_{\tilde{\theta}^i},\phi_{\hat{\theta}^i},\phi_{\check{\theta}^i}\) belong to a function class that includes constant function \(1\).
		
		Then, there exists a constant \(C>0\), depending only on \(Q_T\), \(\|V\|_{L^\infty(Q_T)}\), and \(\lambda\) such that
		\begin{align*}
			{\rm dist}_w( (u_\theta,f_{\bar{\theta}}),\mathcal A_{(u_0,u_1)}^{(z_0,z_1)})
			\le C \delta^{1/2}.
		\end{align*}
		\label{thm:error_estWave}
	\end{Theorem}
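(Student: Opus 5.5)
The plan follows the proof of Theorem \ref{thm:error_estHeat}, with the parabolic energy estimate replaced by the hyperbolic one and Assumption \ref{con:heat} replaced by exact controllability of the wave equation. Under \ref{con:wave} and \ref{con:waveT}, this controllability holds with a continuous dependence estimate; see Remark \ref{rmk:con_wave}.

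\textbf{Step 1: reduction to the unweighted residual.} Since the weight class contains the constant $1$, we set every weight network $\equiv 1$. The hypothesis then reduces to
\[
\|\mathcal R_Q^2(u_\theta,f_{\bar\theta})\|_Q^2+\lambda\|\mathcal R_\Gamma^2(u_\theta)\|_\Gamma^2\le\delta .
\]
This gives $\|R\|_{L^2(Q_T)}^2\lesssim\delta$, where $R:=\partial_{tt}u_\theta-\Delta u_\theta+Vu_\theta-f_{\bar\theta}\mathbf 1_\omega$. It also gives $\|u_\theta\|_{H^{3/2}(\Sigma_T)}^2\lesssim\delta/\lambda$, together with $H^1\times L^2$ smallness of the initial mismatch $(u_\theta(0)-u_0,\partial_tu_\theta(0)-u_1)$ and of the terminal mismatch $(u_\theta(T)-z_0,\partial_tu_\theta(T)-z_1)$.

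\textbf{Step 2: lifting the boundary trace.} We remove the nonzero trace of $u_\theta$ by a lifting $w$ with $w|_{\Sigma_T}=u_\theta|_{\Sigma_T}$, using the hyperbolic trace/lifting theory of Lions--Magenes, Vol.~2, or an $H^{3/2}(\Sigma_T)\to H^2(Q_T)$ right inverse of the trace. We then have
\[
\|w\|_{H^2(Q_T)}^2+\max_t\big(\|w(t)\|_{H^1}^2+\|\partial_tw(t)\|_{L^2}^2\big)\le C\|u_\theta\|_{H^{3/2}(\Sigma_T)}^2\lesssim\delta .
\]
Set $\tilde u:=u_\theta-w$. Then $\tilde u$ vanishes on $\Sigma_T$ and solves
\[
\partial_{tt}\tilde u-\Delta\tilde u+V\tilde u=f_{\bar\theta}\mathbf 1_\omega+R-(\partial_{tt}w-\Delta w+Vw)=:f_{\bar\theta}\mathbf 1_\omega+G,
\]
with $\|G\|_{L^2(Q_T)}^2\lesssim\delta$. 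Its Cauchy data at $t=0$ and at $t=T$ differ from $(u_0,u_1)$ and $(z_0,z_1)$ by $O(\sqrt\delta)$ in $H^1\times L^2$. Here we use that $w(0),w(T)$ are also small in $H^1\times L^2$.

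\textbf{Step 3: correcting to an exact admissible pair.} Let $y$ solve the uncontrolled problem $\partial_{tt}y-\Delta y+Vy=-G$, $y|_{\Sigma_T}=0$, with initial data $(u_0,u_1)-(\tilde u(0),\partial_t\tilde u(0))$. The standard wave energy estimate with Gr\"onwall gives $\max_t\big(\|y\|_{H^1}^2+\|\partial_ty\|_{L^2}^2\big)\lesssim\delta$. Next, $\tilde u+y$ is driven by $f_{\bar\theta}\mathbf 1_\omega$, starts exactly at $(u_0,u_1)$, and ends at $(z_0,z_1)+e_T$ with $\|e_T\|_{H^1\times L^2}\lesssim\sqrt\delta$.

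By exact controllability of the linear wave equation with potential $V$ (guaranteed by \ref{con:wave} and \ref{con:waveT}), there is a control $g\in L^2(q_T)$ with $\|g\|\le C\|e_T\|$. It drives a solution $\xi$ from zero data to $-e_T$, and $\max_t\big(\|\xi\|_{H^1}^2+\|\partial_t\xi\|_{L^2}^2\big)\lesssim\delta$ by the energy estimate. We then define $u:=\tilde u+y+\xi$ and $f:=f_{\bar\theta}+g$. This pair belongs to $\mathcal A_{(u_0,u_1)}^{(z_0,z_1)}$, and
\[
\max_t\big(\|u_\theta-u\|_{H^1}^2+\|\partial_t(u_\theta-u)\|_{L^2}^2\big)+\|f-f_{\bar\theta}\|_{L^2(q_T)}^2\le C\delta .
\]
This estimate is the desired bound on $\mathrm{dist}_w$.

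\textbf{Main obstacle.} I expect the hardest point to be the control-cost estimate $\|g\|\le C\|e_T\|$ with $C$ depending only on $Q_T$ and $\|V\|_{L^\infty}$, uniformly for the bounded potential. I would obtain it from the observability inequality under \ref{con:wave} and \ref{con:waveT}, via the multiplier or compactness-uniqueness argument of \cite[Theorem~4.1]{zuazua2024exactcontrollabilitystabilizationwave}, absorbing $V$ as a perturbation. A secondary difficulty is choosing the lifting $w$ so that its $H^1\times L^2$ data at every time, and $\partial_{tt}w-\Delta w$ in $L^2$, are controlled by the $H^{3/2}(\Sigma_T)$ norm. If $w$ only lies in $H^2(Q_T)$, the $C([0,T];H^1)\cap C^1([0,T];L^2)$ bound follows from the embedding $H^2(Q_T)\subset C([0,T];H^1)$.
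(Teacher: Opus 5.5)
Your proposal is correct and is essentially the paper's argument (trace lifting $w$, wave energy estimate with Gr\"onwall, and the control-cost estimate from \cite[Theorem~4.1]{zuazua2024exactcontrollabilitystabilizationwave}), run in the opposite time direction. The paper solves backward from $(z_0,-z_1)$ with control $f_{\bar\theta}(T-t)$, so the terminal state is matched exactly, and then null-controls the $O(\sqrt\delta)$ initial mismatch; you match the initial data exactly via $y$ and steer away the terminal error $e_T$ with $g$. Both versions tacitly need $u_0,z_0\in H^1_0(\Omega)$ so that the correction problems are posed in $H^1_0\times L^2$, which holds whenever $\mathcal A_{(u_0,u_1)}^{(z_0,z_1)}\neq\emptyset$.
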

	\begin{Remark}
		In Theorem \ref{thm:error_estWave}, we assume the existence of neural networks \(u_\theta\) and \(f_{\bar{\theta}}\) that drive the loss arbitrarily small. Therefore, in this case, we do not require the higher regularity assumptions \( (u_0,u_1),(z_0,z_1)\in W^{1,\infty}(\Omega)\times L^\infty(\Omega)\) or \(u \in W^{2,\infty}(Q_T),f\in L^\infty(\Omega)\) as in Theorem \ref{thm:errorWave}.
	\end{Remark}

	\begin{Remark}
		The situation with general \(v\) also remains open in Theorem \ref{thm:error_estWave}.
	\end{Remark}
	
	\begin{proof}
		The proof proceeds similarly to that of Theorem \ref{thm:error_estHeat}.
		
		Since the class of weight networks contains constant function \(1\), we may choose \(\phi_{\theta^i},\phi_{\bar{\theta}^i},\phi_{\tilde{\theta}^i},\phi_{\hat{\theta}^i},\phi_{\check{\theta}^i}\equiv 1\).
		Under this choice, the hypothesis reduces to
		\begin{align*}
			\|\mathcal{R}_Q^2(u_\theta,f_{\bar{\theta}})\|_{Q}^2+\lambda\|\mathcal{R}_\Gamma^2(u_\theta)\|_\Gamma^2\le\delta.
		\end{align*}
		
		We have function \(V\) such that \(v(u) = Vu\) as in Assumption \ref{con:Vwave}.

		Let \(\bar{u}\in C([0,T];H^1(\Omega))\cap C^1([0,T];L^2(\Omega))\) be the weak solution of
		\begin{align*}
			\partial_{tt} \bar{u} -\Delta \bar{u} + V(T-t,\cdot)\bar{u} &= f_{\bar{\theta}}(T-t,\cdot)\mathbf 1_{\omega} \quad\text{in } Q_T,\\
			\bar{u} &= 0 \quad\text{on } \Sigma_T,\\
			(\bar{u}(0),\partial_t \bar{u}(0)) &= (z_0,-z_1) \quad\text{in }\Omega.
		\end{align*}
		It follows from the approach developed in \cite{Cazenave1980} (see also \cite[Equation~(1.5)]{fu_yong_zhang_2007}) that the above  wave equation admits a unique solution. 

		By \cite[Chapter 4, Theorem 2.1 and Theorem 2.3]{LionsMagenesVol2}, the trace operator is a continuous linear operator from \(L^{2}(0,T;H^2(\Omega))\cap H^2(0,T;L^2(\Omega))\) to \( L^2(0,T;H^{3/2}(\partial\Omega))\cap H^{3/2}(0,T;L^2(\partial\Omega))\). We do not prescribe initial nor terminal condition for compatibility.

		There exist constant \(\tilde C(Q_T)\) and \(w\in L^{2}(0,T;H^2(\Omega))\cap H^2(0,T;L^2(\Omega))\) such that
		\[w(t)|_{\partial \Omega}=u_\theta(t)|_{\partial \Omega}\]
		and
		\[\|w\|_{L^2(0,T;H^2(\Omega))}^2+\|w\|_{H^2(0,T;L^2(\Omega))}^2\le \tilde C(Q_T)(\|u_\theta(t)\|^2_{L^2(0,T;H^{3/2}(\partial\Omega))}+\|u_\theta(t)\|^2_{H^{3/2}(0,T;L^{2}(\partial\Omega))}).\]

		Therefore, we observe that
\begin{align}
	\|w\|_{L^2(0,T;H^2(\Omega))}^2+\|w\|_{H^2(0,T;L^2(\Omega))}^2\le \tilde C(Q_T)\|u_\theta\|^2_{H^{3/2}(\Sigma_T)}.\label{eq:H-3/2trace}
	\end{align}
	The estimate holds when we replace \(w\) by \(\tilde w(t,x) := w(T-t,x)\).

	We denote \(\hat u(t,x) := u_{\theta}(T-t,x)-\bar u(t,x)-\tilde w(t,x)\). Using the previous identity, \(\hat u\) is the weak solution of the following equation
		\begin{align*}
			\partial_{tt}\hat u-\Delta\hat u+V(T-t)\hat u
			&= R(T-t) - (\partial_{tt}\tilde w-\Delta\tilde w+V(T-t)\tilde w)
			\quad\text{in }Q_{T},\\
			\hat u &= 0 \quad\text{on }\Sigma_{T},\\
			(\hat u(0),\partial_t \hat u(0))
			&= (u_\theta(T)-z_0-\tilde w(0),-\partial_t u_\theta(T)+z_1-\partial_t \tilde w(0))
			\quad\text{in }\Omega,
		\end{align*}
		where
		\[
			R:=\partial_{tt}u_{\theta}-\Delta u_{\theta}+V u_{\theta}-f_{\bar\theta}\mathbf{1}_{\omega}
		\quad\text{in }Q_T.
		\]
		
		Taking test function \(\partial_t \hat{u}\) and integrating by parts yield
		\begin{align*}
			\frac{1}{2}\partial_t\big( \|\partial_t \hat{u}\|^2_{L^2(\Omega)}+\|\nabla \hat{u}\|^2_{L^2(\Omega)}\big)
			&= \int_{\Omega} (R(T-t)-V(T-t) \hat{u}) \partial_t \hat{u} d x\\
			&\quad-\int_{\Omega}(\partial_{tt}\tilde w-\Delta\tilde w+V(T-t)\tilde w)\partial_t \hat u\, dx.
		\end{align*}
		
		By Cauchy-Schwarz's inequalities,
		\begin{align*}
			\int_{\Omega} (R(T-t)-V(T-t) \hat{u}) \partial_t \hat{u} d x
			&\le \frac{1}{2}\bigg(\|R(T-t)\|^2_{L^2(\Omega)}
			+\|\partial_t \hat{u}\|^2_{L^2(\Omega)}\\
			&\quad+ \|V\|_{L^\infty(Q_T)}(\|\hat{u}\|^2_{L^2(\Omega)}
		+\|\partial_t \hat{u}\|^2_{L^2(\Omega)})\bigg).
		\end{align*}
		We also have
		\begin{align*}
			\Big|\int_{\Omega}(\partial_{tt}\tilde w-\Delta\tilde w+V(T-t)\tilde w)\partial_t \hat u\, d x\Big|
			&\le \frac{3}{4}\|\partial_t \hat u\|^2_{L^2(\Omega)}
		+\|\partial_{tt}\tilde w\|_{L^2(\Omega)}^2\\
		&\quad+\|\Delta\tilde w\|_{L^2(\Omega)}^2+\|V\|^2_{L^\infty(Q_T)}\|\tilde w\|_{L^2(\Omega)}^{2}.
		\end{align*}
		
		By the Poincar\'e inequality, there exists a constant \(C(\Omega)>0\) such that
		\begin{align*}
			\|\hat{u}\|^2_{L^2(\Omega)}\le C(\Omega)\|\nabla\hat{u}\|^2_{L^2(\Omega)}.
		\end{align*}
		Let \(\mathcal L_v:=\|V\|_{L^\infty(Q_T)}\), there exists a constant \(C_{0}=C_{0}(Q_{T},\mathcal L_{v})>0\) such that
		\begin{align*}
			\partial_t E(t)
			\le C_0 \bigl( E(t)+\|R(T-t)\|^2_{L^2(\Omega)}
				+\|\tilde w\|_{H^2(\Omega)}^2
			+\|\partial_{tt}\tilde w\|_{L^2(\Omega)}^2\bigr)
		\end{align*}
		where \(E(t) = \|\partial_t \hat{u}\|^2_{L^2(\Omega)}+\|\nabla \hat{u}\|^2_{L^2(\Omega)}\).
		
		Applying the Gronwall's inequality yields, for any \(t\in [0,T]\),
		\begin{align*}
			E &\le e^{C_0t}\bigg( E(0)+C_0\int_{0}^{t}(\|R(T-t)\|^2_{L^2(\Omega)}+\|\tilde w\|^2_{H^2(\Omega)}+\|\partial_{tt}\tilde w\|^2_{L^2(\Omega)})ds\bigg)\\
			&\le e^{C_0T}\bigg( E(0)+C_0\bigl(\|R\|^2_{L^2(Q_T)}+\|w\|_{L^2(0,T;H^2(\Omega))}^2+\|w\|_{H^2(0,T;L^2(\Omega))}^2\bigr)\bigg).
		\end{align*}

		Consequently, there exists a constant \(C_1=C_1(Q_{T},\mathcal L_{v})>0\) such that
		\begin{align*}
			\max_{t\in[0,T]}\bigl\{\|\partial_t\hat{u}(t)\|^2_{L^2(\Omega)}+\|\nabla \hat{u}\|^2_{L^2(\Omega)}\bigr\}
			&\le C_1 \bigl(\|R\|^2_{L^2(Q_T)}+\|w\|_{L^2(0,T;H^2(\Omega))}^2+\|w\|_{H^2(0,T;L^2(\Omega))}\\
				&\quad+\|\nabla w(T)\|_{L^2(\Omega)}^2
				+\|\partial_t w(T)\|_{L^2(\Omega)}^2\\
			&\quad+\|\nabla(u_{\theta}(T)- z_0)\|^2_{L^2(\Omega)}+\|\partial_t u_{\theta}(T)-z_1\|^2_{L^2(\Omega)}\bigr).\nonumber
		\end{align*}
		
		By fundamental theorem of calculus and \(\|a+b+c\|^2\le 3(\|a\|^2+\|b\|^2+\|c\|^2)\), we have
		\begin{align*}
			\|\hat{u}(t)\|^2_{L^2(\Omega)}
			\le 3\|u_\theta(T)-z_0\|^2_{L^2(\Omega)}
			+3\|w(T)\|_{L^2(\Omega)}^2
			+ 3\bigg\|\int_{0}^{t}\partial_t \hat{u}(s)ds\bigg\|^2_{L^2(\Omega)}.
		\end{align*}
		We further estimate
		\begin{align*}
			\bigg\|\int_{0}^{t}\partial_t \hat{u}(s)ds\bigg\|^2_{L^2(\Omega)}
			\le t\int_{\Omega}\int_{0}^{t}|\partial_t \hat{u}(s,x)|^2dsdx
			\le T^2\max_{s\in[0,T]}\|\partial_t \hat{u}(s)\|^2_{L^2(\Omega)}.
		\end{align*}
		
		Therefore, there exists a constant \(C_2 = C_2(Q_{T},\mathcal L_{v})>0\) such that
		\begin{align*}
			&\max_{t\in[0,T]}\bigl\{\|u_\theta(t)-\bar u(T-t)\|^2_{H^1(\Omega)}+\|\partial_tu_\theta(t)+\partial_t\bar u(T-t)\|^2_{L^2(\Omega)}\bigr\}\\
			&\quad
			\le C_2 \bigl(\|R\|^2_{L^2(Q_T)}+\|u_\theta\|^2_{H^{3/2}(\Sigma_T)}
				+\max_{s\in [0,T]}\{\| w(s)\|_{H^1(\Omega)}^2
				+\|\partial_t w(s)\|_{L^2(\Omega)}^2\}\nonumber\\
			&\qquad\quad+\|u_{\theta}(T)- z_0\|^2_{H^1(\Omega)}+\|\partial_t u_{\theta}(T)-z_1\|^2_{L^2(\Omega)}\bigr).\nonumber
		\end{align*}

		By the interpolation property \cite[Chapter 4, Proposition 2.1]{LionsMagenesVol2}, \(L^2(0,T;H^2(\Omega))\cap H^2(0,T;L^2)\) can be embedded into \(H^{2\theta}(0,T;H^{2-2\theta}(\Omega))\) for \(\theta\in[0,1]\).

		Then, we can embed \(H^{2\theta}(0,T;H^{2-2\theta}(\Omega))\)  into \( C([0,T];H^{2-2\theta}(\Omega))\) if \(\theta>1/4\) and into \( C^1([0,T];H^{2-2\theta}(\Omega))\) if \(\theta>3/4\).

		Consequently, \(L^2(0,T;H^2(\Omega))\cap H^2(0,T;L^2(\Omega))\) can be embedded into \(C([0,T],H^{1}(\Omega))\cap C^1([0,T],L^2(\Omega))\).

		This implies that there exists constant \(C_3=C_3(Q_T)>0\) such that
		\begin{align}\label{eq:embedding2}
			\max_{s\in [0,T]}\{\| w(s)\|_{H^1(\Omega)}^2
				+\|\partial_t w(s)\|_{L^2(\Omega)}^2\}
				\le C_3(\|w\|^2_{L^2(0,T;H^2(\Omega))}+\|w\|^2_{H^2(0,T;L^2(\Omega))}).
		\end{align}

		Hence, there exists a constant \(C_4=C_4(Q_T,\mathcal L_v)>0\) such that
		\begin{align}\label{eq:liftfinish}
			&\max_{t\in[0,T]}\bigl\{\|u_\theta(t)-\bar u(T-t)\|^2_{H^1(\Omega)}+\|\partial_tu_\theta(t)+\partial_t\bar u(T-t)\|^2_{L^2(\Omega)}\bigr\}\\
			&\quad
			\le C_4\bigl(\|R\|^2_{L^2(Q_T)}+\|u_\theta\|^2_{H^{3/2}(\Sigma_T)}
			+\|u_{\theta}(T)- z_0\|^2_{H^1(\Omega)}+\|\partial_t u_{\theta}(T)-z_1\|^2_{L^2(\Omega)}\bigr)\nonumber\\
			&\quad \le C_4\delta.\nonumber
		\end{align}
		
By the controllability of the linear wave equation established in \cite[Theorem 4.1]{zuazua2024exactcontrollabilitystabilizationwave}, there exist $g\in L^2(q_T)$ and
$\xi\in C([0,T];H^1_0(\Omega))\cap C^1([0,T];L^2(\Omega))$ such that
\begin{equation}\label{eq:xi_wave}
	\begin{split}
	\partial_{tt}\xi-\Delta\xi+V\xi&=g\mathbf 1_\omega
\text{ in }Q_T,\quad
\xi|_{\Sigma_T}=0,\quad\\
(\xi(0),\partial_t\xi(0))&=(u_0-\bar u(T),u_1+\partial_t\bar u(T)),\quad
(\xi(T),\partial_t\xi(T))=(0,0),
\end{split}
\end{equation}
together with a constant $C_5=C_5(Q_T,\mathcal L_v)>0$ satisfying
\begin{align}\label{eq:g_est_wave}
	\|g\|_{L^2(q_T)}&\le C_5\|(u_0-\bar u(T),u_1+\partial_t\bar u(T))\|_{H^1(\Omega)\times L^2(\Omega)}\\
	&\le C_5(\|(u_0- u_\theta(0),u_1-\partial_t u_\theta(0))\|_{H^1(\Omega)\times L^2(\Omega)}+\|(u_\theta(0)-\bar u(T),\partial_t u_\theta(0)+\partial_t \bar u(T))\|_{H^1(\Omega)\times L^2(\Omega)})\nonumber\\
	&\le C_5(1+\sqrt{C_4})\sqrt\delta\nonumber.
\end{align}
The standard energy estimate applied to \eqref{eq:xi_wave} yields
\begin{equation}\label{eq:xi_est_wave}
\max_{t\in[0,T]}\{\|\xi(t)\|_{H^1(\Omega)}^{2}
+\|\partial_t \xi(t)\|_{L^2(\Omega)}^{2}\}
\le C_6\bigl(\|g\|_{L^2(q_T)}^2+\|(u_0-\bar u(T),u_1+\partial_t\bar u(T))\|_{H^1(\Omega)\times L^2(\Omega)}^2\bigr)\le C_7\delta
\end{equation}
for $C_6,C_7>0$ depending only on $Q_T$ and $\mathcal L_v$.

Define
\[
	u(t,x):=\bar u(T-t,x)+\xi(t,x),\qquad
	f:=f_{\bar\theta}+g.
\]
Adding the equations for $\bar u(T-t)$ and $\xi$,
\[
	\partial_{tt} u-\Delta u+Vu
=f\mathbf 1_\omega\text{ in }Q_T,
\]
together with $u|_{\Sigma_T}=0$, $(u(0),\partial_t u(0))=(\bar u(T)+\xi(0),-\partial_t \bar u(T)+\partial_t\xi(0))=(u_0,u_1)$, and
$(u(T),\partial_t u(T))=(\bar u(0)+\xi(T),-\partial_t\bar u(0)+\partial_t\xi(T))=(z_0,z_1)$. Hence $(u,f)\in\mathcal A_{(u_0,u_1)}^{(z_0,z_1)}$.
		
Combining \eqref{eq:liftfinish} and \eqref{eq:xi_est_wave},
\begin{align*}
&\max_{t\in[0,T]}\{\|u_\theta(t)-u(t)\|_{H^1(\Omega)}^{2}
+\|\partial_t u_\theta(t)-\partial_t u(t)\|_{L^2(\Omega)}^{2}\}\\
&\qquad\le 2\bigl(\max_{t\in[0,T]}\{\|u_\theta(t)-\bar u(T-t)\|_{H^1(\Omega)}^{2}+\|\partial_t u_\theta(t)+\partial_t\bar u(T-t)\|_{L^2(\Omega)}^2\}\\
&\qquad\quad+\max_{t\in[0,T]}\{\|\xi(t)\|_{H^1(\Omega)}^{2}+\|\partial_t \xi(t)\|_{L^2(\Omega)}^{2}\}\bigr)\\
&\qquad\le C_8\delta,
\end{align*}
for some $C_8=C_8(Q_T,\mathcal L_v,\lambda)>0$. Therefore, there exists
$C=C(Q_T,\mathcal L_v,\lambda)>0$ such that
\[
\max_{t\in[0,T]}\{\|u_\theta(t)-u(t)\|_{H^1(\Omega)}^{2}
+\|\partial_tu_\theta(t)-\partial_t u(t)\|_{L^2(\Omega)}^{2}\}
+\|f_{\bar\theta}-f\|_{L^2(q_T)}^{2}\le C^2\delta,
\]
which yields
\[
	\operatorname{dist}_w\!\bigl( (u_\theta,f_{\bar{\theta}}),\mathcal A_{(u_0,u_1)}^{(z_0,z_1)}\bigr)
\le C\sqrt\delta.
\qedhere
\]	
	\end{proof}
	
	\subsection{PINNs and WeightedPINNs for bilinear controls}~~
	
	We extend the results from Theorem \ref{thm:errorHeat} and Theorem \ref{thm:errorWave} to bilinear control settings.
	
	\begin{Theorem}
		Assume that there exists a bilinear control \(f\in W^{1,\infty}(Q_T)\)  and a corresponding solution \(u\in W^{2,\infty}(Q_T)\) governed by \eqref{Heat1b}  with initial and terminal conditions \( u_0, z_0\in L^\infty(\Omega)\). The constants \(\lambda\) and \(\rho\) are given such that
		\begin{align*}
		\rho&\ge\max\{3\lambda\|u_0\|^2_{L^\infty(\Omega)},3\lambda\|z_0\|^2_{L^\infty(\Omega)},\\
	&\qquad\quad(d+4)\|u\|^2_{W^{2,\infty}(Q_T)},(d+4)\|v(u)\|^2_{L^\infty(Q_T)},(d+4)\|f\|^2_{L^\infty(Q_T)}\|u\|^2_{L^\infty(Q_T)}\}.\end{align*}
		Suppose that the activation function \(\sigma\) satisfies Assumption \ref{con:sigma}, or that \(\sigma = \mathrm{ReLU}^{m+1}\) with \(m \ge 2\), and suppose that the weight networks \(\phi_{\theta^i}, \phi_{\bar \theta^i}\) satisfy Assumption \ref{con:weight}.
		
		Then, for any \(\delta > 0\), and sufficiently large \(\mathcal{W}, \mathcal{L} \in \mathbb{N}\) such that \(\log_2(\mathcal{W}) \le \mathcal{L}\), there exist neural networks \(u_{\theta}\) and \(f_{\bar{\theta}}\), constructed using \(\sigma\), with width \(C_1 \mathcal{W}\log \mathcal{W}\) and depth \(C_2 \mathcal{L}\log \mathcal{L}\), respectively, such that
		\begin{align*}
			\sup_{(\theta^i)_{i=1}^{8},(\bar{\theta}^i)_{i=1}^d}\|\bar{\mathcal{R}}_Q^{3}\|^2_{Q}+\lambda\|\bar{\mathcal{R}}_\Gamma^{3}\|^2_{\Gamma}-\rho\|\mathcal R_{NN}^{3}\|_{NN}^2
			\le C_3 (\mathcal{W}^{-\frac{1}{d+1}}\mathcal{L}^{-\frac{1}{d+1}} + \delta)^2.
		\end{align*}
		Here, \(C_i\) for \(i=1,2,3\) are constants independent of \(\mathcal{W}\), \(\mathcal{L}\), and \(\delta\). 
		The residuals \(\bar{\mathcal{R}}_Q^{3},\bar{\mathcal{R}}_\Gamma^{3}\), and \(\mathcal{R}_{NN}^3\)  are defined in \eqref{eq:barRQ3}, \eqref{eq:barRG3}, and \eqref{eq:RNN3}.
		\label{thm:errorHeatb}
	\end{Theorem}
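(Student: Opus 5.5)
The proof follows that of Theorem \ref{thm:errorHeat}. The only structural change is in the control term, where $f\mathbf 1_\omega$ becomes the product $fu$. First I mollify: extend $u$ and $f$ to $\mathbb R^{d+1}$ with the extension operators $E_1$ (acting on $H^2(Q_T)$) and a $W^{1,\infty}$/$H^1$-extension for $f$, and set $u_\varepsilon=(E_1u)*\eta_\varepsilon$ and $f_\varepsilon=(Ef)*\eta_\varepsilon$. Given $\delta>0$, I choose $\varepsilon$ so that $\|\hat u\|_{H^2(Q_T)}+\|\hat f\|_{L^2(Q_T)}\le\delta$. I also use that $\|u_\varepsilon\|_{L^\infty}\le\|u\|_{L^\infty}$ and $\|f_\varepsilon\|_{L^\infty}\le C\|f\|_{W^{1,\infty}}$ uniformly in $\varepsilon$. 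The Young-inequality bounds \eqref{eq:YoungW1}--\eqref{eq:YoungW2} carry over verbatim.

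Next I split the weighted residual at $(u_\varepsilon,f_\varepsilon)$ exactly as before, writing each weight as $\phi=1+(\phi-1)$. Since the exact equation holds, the ``$1$'' parts leave
\[
\phi_{\theta^1}\partial_t\hat u-\sum_i\phi_{\bar\theta^i}\partial_{x_ix_i}\hat u+\phi_{\theta^2}(v(u)-v(u_\varepsilon))-\phi_{\theta^3}(fu-f_\varepsilon u_\varepsilon),
\]
together with the terms $(\phi-1)\times$(exact component). For the last of these I use $\|\phi_{\theta^3}-1\|_Q\|f\|_{L^\infty}\|u\|_{L^\infty}$, which is why the hypothesis on $\rho$ contains $(d+4)\|f\|^2_{L^\infty}\|u\|^2_{L^\infty}$. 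The new bilinear difference is decomposed as $fu-f_\varepsilon u_\varepsilon=\hat f\,u+f_\varepsilon\hat u$, so its $L^2$ norm is at most $\|u\|_{L^\infty}\|\hat f\|_{L^2}+C\|f\|_{W^{1,\infty}}\|\hat u\|_{L^2}\le C\delta$. The analogue of \eqref{eq:est_res} therefore holds with a constant depending also on $\|u\|_{L^\infty}$ and $\|f\|_{W^{1,\infty}}$.

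Then I invoke \cite[Theorem 25]{yang2025deepneuralnetworksgeneral} to obtain $u_\theta$ and $f_{\bar\theta}$ satisfying \eqref{eq:udiff}--\eqref{eq:fdiff}, now with the $f$-approximation taken on $Q_T$. With $\tilde u=u_\theta-u_\varepsilon$ and $\tilde f=f_{\bar\theta}-f_\varepsilon$, I write
\[
f_{\bar\theta}u_\theta-f_\varepsilon u_\varepsilon=\tilde f\,u_\theta+f_\varepsilon\tilde u.
\]
Here $\|u_\theta\|_{L^\infty}\le\|u\|_{L^\infty}+1$ once $\mathcal W,\mathcal L$ are large, because \eqref{eq:udiff} controls the $W^{2,\infty}$ norm. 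The main obstacle is the $\varepsilon$-dependence: the bounds must hold uniformly, so I need $\|f_\varepsilon\|_{L^\infty}$ and $\|u_\theta\|_{L^\infty}$ bounded independently of $\varepsilon$. That is exactly why $f\in W^{1,\infty}$ (not merely $L^\infty$) and the $L^\infty$ control in \eqref{eq:udiff} are used. With the same coupling $\mathcal W^{1/(d+1)}\mathcal L^{1/(d+1)}\ge\varepsilon^{-1-(d+1)/2}$, this piece is bounded by $C\mathcal W^{-1/(d+1)}\mathcal L^{-1/(d+1)}$.

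Finally, squaring with $(a_1+\dots+a_{d+4})^2\le(d+4)\sum a_i^2$ and using the assumed lower bound on $\rho$ absorbs all the $(\phi-1)$ terms into $\rho\|\mathcal R^3_{NN}\|^2_{NN}$. The boundary, initial and terminal terms do not involve the control, so they are treated word for word as in \eqref{eq:est_init}--\eqref{eq:est_R1G_NN}, using the trace theorem and the condition $\rho\ge3\lambda\max\{\|u_0\|_{L^\infty}^2,\|z_0\|_{L^\infty}^2\}$. Summing these estimates gives the bound $C_3(\mathcal W^{-\frac1{d+1}}\mathcal L^{-\frac1{d+1}}+\delta)^2$ for every choice of weight parameters, hence also for the supremum.
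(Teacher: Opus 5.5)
Your proposal is correct and follows essentially the same route as the paper: mollify the extended $u$ and $f$, split each weight as $1+(\phi-1)$ so that the exact equation cancels the leading part, treat the product via $fu-f_\varepsilon u_\varepsilon=\hat f\,u+f_\varepsilon\hat u$, approximate $(u_\varepsilon,f_\varepsilon)$ by \cite[Theorem 25]{yang2025deepneuralnetworksgeneral} under the coupling $\mathcal W^{\frac{1}{d+1}}\mathcal L^{\frac{1}{d+1}}\ge\varepsilon^{-1-\frac{d+1}{2}}$, and absorb the $(\phi-1)$ terms using the lower bound on $\rho$. The differences are minor. You make explicit the splitting $f_{\bar\theta}u_\theta-f_\varepsilon u_\varepsilon=\tilde f\,u_\theta+f_\varepsilon\tilde u$, which the paper leaves implicit. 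You also measure $\hat f$ in $L^2$, paired with $\|u\|_{L^\infty}$, whereas the paper uses $f\in W^{1,\infty}$ to get $\|\hat f\|_{L^\infty}\le C\varepsilon\|f\|_{W^{1,\infty}}$. Consequently, on your route $f\in L^\infty(Q_T)$ would already suffice: uniform boundedness of $\|f_\varepsilon\|_{L^\infty}$ needs only that, contrary to your remark. Finally, your bound $\|u_\varepsilon\|_{L^\infty}\le\|u\|_{L^\infty}$ should carry the extension-operator constant.
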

	
	\begin{Remark}
		Similar to Theorem \ref{thm:errorHeat}, to ensure the well-posedness and stability of the approximation scheme, we assume that \(u_0,z_0\in L^\infty(\Omega)\) and \(u \in W^{2,\infty}(Q_T)\). The control \(f\) enters the equation multiplicatively, which requires higher regularity for analysis. Thus, we further assume that \(f \in W^{1,\infty}(Q_T)\).
	\end{Remark}
	
	\begin{proof}
		
		Let us recall the standard mollifier
		\[
		\eta_\varepsilon(t,x):=\varepsilon^{-(d+1)}\,
		\eta\bigl(\varepsilon^{-1}t,\varepsilon^{-1}x\bigr),
		\]
		where \(\eta\) is defined in \eqref{eq:mollifier}.
		
		Let \[Q_T':= \{(t,x)\in \mathbb R^{d+1}\mid \text{there is }(s,y)\in Q_T: \|(t-s,x-y)\|\le1\}.\]
By \cite[Theorem 1, Section 5.4]{Evans:1998:PDE}, there exist extension operators \(E_1:H^2(Q_T)\to H^2(\mathbb R^{d+1}), E_2:W^{1,\infty}(Q_T)\to W^{1,\infty}(\mathbb R^{d+1})\) and constants \(C_{E_1},C_{E_2}\) such that
		\begin{align*}
			E_1 u|_{Q_T} = u\text{ almost everywhere in }Q_T,\quad E_2 f = f\text{ almost everywhere in }Q_T,
		\end{align*}
		and
		\begin{align*}
			\|E_1 u\|_{H^2(\mathbb R^{d+1})}\le C_{E_1}\|u\|_{H^2(Q_T)},\quad \|E_2 f\|_{W^{1,\infty}(\mathbb R^{d+1})}\le C_{E_2}\|f\|_{W^{1,\infty}(Q_T)},
		\end{align*}
		and \(E_1 u,E_2 f\) has support on \(Q_T'\).

		We define
		\begin{align*}
		u_\varepsilon := (E_1 u) * \eta_\varepsilon,\quad f_\varepsilon := (E_2 f) * \eta_\varepsilon.\end{align*}
		We denote the differences
		\[
		\hat u := u-u_\varepsilon,\qquad
		\hat f := f-f_\varepsilon .
		\]
		
		For any \(\delta>0\) there exists a sufficiently small \(\varepsilon>0\) such that
		\begin{align*}
			\|\hat u\|_{H^{2}(Q_T)} \le \delta .
		\end{align*}
		Since \(f\in W^{1,\infty}(Q_T)\),
		\begin{align*}
			\|\hat f\|_{L^{\infty}(Q_T)}
			&= \esssup_{(t,x)\in Q_T}
			\Bigg|\int_{\mathbb{R}^{d+1}}\eta_{\varepsilon}(s,y)
			\bigl(E_2f(t,x)-E_2f(t-s,x-y)\bigr)\,ds\,dy\Bigg| \\
			&\le \|\nabla_{(t,x)}E_2f\|_{L^{\infty}(\mathbb R^{d+1})}
			\int_{\mathbb{R}^{d+1}}\eta_{\varepsilon}(s,y)\,\|(s,y)\|\,ds\,dy  \\
			&\le \varepsilon\,C_{E_2}\|f\|_{W^{1,\infty}(q_T)} .
		\end{align*}
		Choosing \(\varepsilon\) so that \(\varepsilon C_{E_2}\|f\|_{W^{1,\infty}(Q_T)}<\delta\) yields
		\(\|\hat f\|_{L^{\infty}(Q_T)}\le\delta\).

		We repeat the estimate in \eqref{eq:YoungW1} and get that
		\begin{align}
			\|u_\varepsilon\|_{W^{3,\infty}(\mathbb R^{d+1})}
			\le \varepsilon^{-(d+1)/2-1}C_{E_1}
			\Bigl(\varepsilon\|\eta\|_{L^{2}(\mathbb R^{d+1})}+|\eta|_{H^1(\mathbb R^{d+1})}\Bigr)\,
			\|u\|_{H^{2}(Q_T)} . \label{eq:uEps}
		\end{align}
		Similarly,
		\begin{align}
			\|f_\varepsilon\|_{W^{2,\infty}(\mathbb R^{d+1})}
			\le \varepsilon^{-(d+1)/2-1}C_{E_2}
			\Bigl(\varepsilon\|\eta\|_{L^{1}(\mathbb R^{d+1})}+|\eta|_{W^{1,1}(\mathbb R^{d+1})}\Bigr)\,
			\|f\|_{W^{1,\infty}(q_T)} .
		\label{eq:fEps}
		\end{align}

		From \cite[Theorem~25]{yang2025deepneuralnetworksgeneral} there exist neural networks
		\(u_{\theta}\) and \(f_{\bar\theta}\) with width \(C_{1}\mathcal{W}\log\mathcal{W}\) and depth
		\(C_{2}\mathcal{L}\log\mathcal{L}\), where constants \(C_{1},C_{2}\) are independent of \(\mathcal{W},\mathcal{L}\) and \(\delta\), such that
		\begin{align*}
			\|u_\varepsilon-u_{\theta}\|_{W^{2,\infty}(Q_T)}
			&\le C_{Q_T}(d)\,\|u_\varepsilon\|_{W^{3,\infty}(Q_T)}\,
			\mathcal{W}^{-\frac{2}{d+1}}\mathcal{L}^{-\frac{2}{d+1}}, \\
			\|f_\varepsilon-f_{\bar\theta}\|_{L^{\infty}(Q_T)}
			&\le C_{Q_T}(d)\,\|f_\varepsilon\|_{W^{2,\infty}(Q_T)}\,
			\mathcal{W}^{-\frac{4}{d+1}}\mathcal{L}^{-\frac{4}{d+1}}. 
		\end{align*}
		Choosing \(\mathcal{W},\mathcal{L}\) sufficiently large that \(\mathcal{W}^{\frac{1}{d+1}}\mathcal{L}^{\frac{1}{d+1}}\ge \,\varepsilon^{-1-(d+1)/2}\) and using \eqref{eq:uEps}, \eqref{eq:fEps} we obtain
		\[
		\|u_\varepsilon-u_{\theta}\|_{H^{2}(Q_T)},
		\;\|f_\varepsilon-f_{\bar\theta}\|_{L^{\infty}(q_T)}
		\le \tilde C_0\,\mathcal{W}^{-\frac{1}{d+1}}\mathcal{L}^{-\frac{1}{d+1}},
		\]
		where \(\tilde C_0>0\) is a constant independent of \(\mathcal W,\mathcal L\) and \(\delta\).
		
		Let \(\mathcal L_v\) be the Lipschitz constant of \(v\) and let \(M\) be the constant from Assumption \ref{con:weight}.
		Using the bound on \(\hat{u}\) and the bound on \(\hat f\), we obtain
		\begin{align*}
			\|\bar{\mathcal{R}}_Q^{3}(u_\varepsilon,f_\varepsilon)\|_{Q}
			&\le \frac{M}{\sqrt{|Q_T|}}
			\Big(
			\|\partial_{t}\hat u\|_{L^{2}(Q_T)}
			+ \|\Delta \hat u\|_{L^{2}(Q_T)}+\|v(u)-v(u_\varepsilon)\|_{L^2(Q_T)}\\
			&\quad
			+ \|\hat f\|_{L^{\infty}(Q_T)}\|u\|_{L^{2}(Q_T)}
			+ \|f_\varepsilon\|_{L^{\infty}(Q_T)}\|\hat u\|_{L^{2}(Q_T)}
			\Big)\\
			&\quad
			+\|\phi_{\theta^1}-1\|_{Q}\|\partial_t u\|_{L^\infty(Q_T)}+\sum_{i=1}^{d}\|\phi_{\bar\theta^i}-1\|_{Q}\|\partial_{x_i x_i} u\|_{L^\infty(Q_T)}\nonumber\\
			&\quad
			+\|\phi_{\theta^2}-1\|_{Q}\|v(u)\|_{L^\infty(Q_T)}+\|\phi_{\theta^3}-1\|_{Q}\|f\|_{L^\infty(Q_T)}\|u\|_{L^\infty(Q_T)}.\nonumber\\
			&\le \frac{M}{\sqrt{|Q_T|}}
			\bigl(1+\mathcal L_v+\delta+\|f\|_{L^{\infty}(Q_T)}+\|u\|_{L^{2}(Q_T)}\bigr)\,\delta\\
			&\quad
			+\|\phi_{\theta^1}-1\|_{Q}\|\partial_t u\|_{L^\infty(Q_T)}+\sum_{i=1}^{d}\|\phi_{\bar\theta^i}-1\|_{Q}\|\partial_{x_i x_i} u\|_{L^\infty(Q_T)}\nonumber\\
			&\quad+\|\phi_{\theta^2}-1\|_{Q}\|v(u)\|_{L^\infty(Q_T)}+\|\phi_{\theta^3}-1\|_{Q}\|f\|_{L^\infty(Q_T)}\|u\|_{L^\infty(Q_T)}.\nonumber
		\end{align*}
		Now, we estimate
		\begin{align*}
			\|\bar{\mathcal{R}}_Q^{3}(u_{\theta},f_{\bar\theta})\|_{Q}
			&\le \|\bar{\mathcal{R}}_Q^{3}(u_{\theta},f_{\bar\theta})
			-\bar{\mathcal{R}}_Q^{3}(u_\varepsilon,f_\varepsilon)\|_{Q}
			+ \|\bar{\mathcal{R}}_Q^{3}(u_\varepsilon,f_\varepsilon)\|_{Q} \\
			&\le \frac{M}{\sqrt{|Q_T|}}(1+\tilde C_0)
			\bigl(1+\mathcal L_v+\delta+\|f\|_{L^{\infty}(Q_T)}+\|u\|_{L^{2}(Q_T)}\bigr)
			\bigl(\mathcal{W}^{-\frac{1}{d+1}}\mathcal{L}^{-\frac{1}{d+1}}+\delta\bigr)\\
			&\quad
			+\|\phi_{\theta^1}-1\|_{Q}\|\partial_t u\|_{L^\infty(Q_T)}+\sum_{i=1}^{d}\|\phi_{\bar\theta^i}-1\|_{Q}\|\partial_{x_i x_i} u\|_{L^\infty(Q_T)}\nonumber\\
			&\quad+\|\phi_{\theta^2}-1\|_{Q}\|v(u)\|_{L^\infty(Q_T)}+\|\phi_{\theta^3}-1\|_{Q}\|f\|_{L^\infty(Q_T)}\|u\|_{L^\infty(Q_T)}.\nonumber
		\end{align*}

		Since \[\rho\ge(d+4)\max\{\|u\|^2_{W^{2,\infty}(Q_T)},\|v(u)\|^2_{L^\infty(Q_T)},\|f\|^2_{L^\infty(Q_T)}\|u\|^2_{L^\infty(Q_T)}\},\]
		by the inequality \( (a_1+\dots+a_{d+4})^2 \le (d+4)(a_1^2+\dots+a_{d+4}^2)\), it follows that
			\begin{align*}
				\|\bar{\mathcal{R}}_Q^3(u_\theta,f_{\bar{\theta}})\|^2_{Q}&\le  \tilde C_1 (\mathcal{W}^{-\frac{1}{d+1}}\mathcal{L}^{-\frac{1}{d+1}} + \delta)^2\\
				&\quad +\rho\Big(\|\phi_{\theta^1}-1\|_{Q}^2+\sum_{i=1}^{d}\|\phi_{\bar\theta^i}-1\|^2_{Q}+\|\phi_{\theta^2}-1\|_{Q}^2+\|\phi_{\theta^3}-1\|_{Q}^2\Big)\nonumber
			\end{align*}
		where \(\tilde C_1 > 0\) is independent of \(\mathcal{W}\), \(\mathcal{L}\), and \(\delta\).
		
		Repeat the estimate in \eqref{eq:est_R1G_NN}, we obtain
		\begin{align*}
			\lambda\|\bar{\mathcal R}_{\Gamma}^3(u_\theta)\|^2_{\Gamma}
			&\le \tilde C_2 (\mathcal{W}^{-\frac{1}{d+1}}\mathcal{L}^{-\frac{1}{d+1}} + \delta)^2\\
			&\quad+\rho\Big(\|\phi_{\theta^5}(0,\cdot)-1\|^2_{L^2(\Omega)}+\|\phi_{\theta^6}(0,\cdot)-1\|^2_{L^2(\Omega)}\nonumber\\
			&\quad\quad+\|\phi_{\theta^7}(T,\cdot)-1\|^2_{L^2(\Omega)}+\|\phi_{\theta^8}(T,\cdot)-1\|^2_{L^2(\Omega)}\Big),\nonumber
		\end{align*}
		where \(\tilde C_2>0\) is independent of \(\mathcal{W},\mathcal L\) and \(\delta\).

		Therefore, there exists a constant \(C_3>0\) independent of \(\mathcal W,\mathcal L\) and \(\delta\) such that
		\begin{align*}
			\|\bar{\mathcal{R}}_Q^{3}\|^2_{Q}+\lambda\|\bar{\mathcal{R}}_\Gamma^{3}\|^2_{\Gamma}-\rho\|\mathcal R_{NN}^{3}\|_{NN}^2
			\le C_3 (\mathcal{W}^{-\frac{1}{d+1}}\mathcal{L}^{-\frac{1}{d+1}} + \delta)^2
		\end{align*}
		for any given parameters \((\theta^i)_{i=1}^{8},(\bar{\theta}^i)_{i=1}^{d}\).		
	\end{proof}

	We now state the corresponding results for bilinear control of the wave equation.
	\begin{Theorem}
		Assume that there exists a bilinear control \(f\in W^{1,\infty}(Q_T)\)  and a corresponding solution \(u\in W^{2,\infty}(Q_T)\) governed by \eqref{Wave1b} with initial and terminal conditions \( (u_0,u_1),(z_0,z_1)\in W^{1,\infty}(\Omega)\times L^\infty(\Omega)\).
		The constants \(\lambda\) and \(\rho\) are given such that 
		\begin{align*}
		\rho&\ge\max\{3\lambda\|u_0\|^2_{W^{1,\infty}(\Omega)},3\lambda\|u_1\|^2_{L^\infty(\Omega)},3\lambda\|z_0\|^2_{W^{1,\infty}(\Omega)},3\lambda\|z_1\|^2_{L^\infty(\Omega)}\\
&\qquad\quad(d+4)\|u\|^2_{W^{2,\infty}(Q_T)},(d+4)\|v(u)\|^2_{L^\infty(Q_T)},(d+4)\|f\|^2_{L^\infty(Q_T)}\|u\|^2_{L^\infty(Q_T)}\}.\end{align*}
		
		Suppose that the activation function \(\sigma\) satisfies Assumption \ref{con:sigma}, or that \(\sigma = \mathrm{ReLU}^{m+1}\) with \(m \ge 2\), and suppose that the weight networks \(\phi_{\theta^i},\phi_{\bar \theta^i},\phi_{\tilde \theta^i},\phi_{\hat \theta^i},\phi_{\check \theta^i}\) satisfy Assumption \ref{con:weight}.
		
		Then, for any \(\delta > 0\), and sufficiently large \(\mathcal{W}, \mathcal{L} \in \mathbb{N}\) such that \(\log_2(\mathcal{W}) \le \mathcal{L}\), there exist neural networks \(u_{\theta}\) and \(f_{\bar{\theta}}\), constructed using \(\sigma\), with width \(C_1 \mathcal{W}\log \mathcal{W}\) and depth \(C_2 \mathcal{L}\log \mathcal{L}\), respectively, such that
		\begin{align*}
			\sup_{(\theta^i)_{i=1}^{12},(\bar{\theta}^i)_{i=1}^d,(\tilde{\theta}^i)_{i=1}^{d},(\hat{\theta}^i)_{i=1}^{2d},(\check{\theta}^i)_{i=1}^{2d}}\|\bar{\mathcal{R}}_Q^{4}\|^2_{Q}+\lambda\|\bar{\mathcal{R}}_\Gamma^{4}\|^2_{\Gamma}-\rho\|\mathcal R_{NN}^{4}\|_{NN}^2
			\le C_3 (\mathcal{W}^{-\frac{1}{d+1}}\mathcal{L}^{-\frac{1}{d+1}} + \delta)^2.
		\end{align*}
		Here, \(\lambda,\rho>0\) are given constants and \(C_i\) for \(i=1,2,3\) are constants independent of \(\mathcal{W}\), \(\mathcal{L}\), and \(\delta\). 
		The residuals \(\bar{\mathcal{R}}_Q^{4},\bar{\mathcal{R}}_\Gamma^{4}\), and \(\mathcal{R}_{NN}^4\) are defined in \eqref{eq:barRQ4}, \eqref{eq:barRG4}, and \eqref{eq:RNN4}.
		\label{thm:errorWaveb}
	\end{Theorem}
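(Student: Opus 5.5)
The plan is to combine the proof of Theorem \ref{thm:errorHeatb}, which handles the bilinear control term, with the modifications listed in the proof of Theorem \ref{thm:errorWave}, which handle the wave-type boundary, initial and terminal terms. The argument has three steps.

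\textbf{Step 1: mollification.} First, I extend $u$ by $E_1$ (in $H^2$) and $f$ by $E_2$ (in $W^{1,\infty}$), and mollify both to obtain $u_\varepsilon, f_\varepsilon$. I then choose $\varepsilon$ so that
\[
\|u-u_\varepsilon\|_{H^2(Q_T)}\le\delta,\qquad \|f-f_\varepsilon\|_{L^\infty(Q_T)}\le \varepsilon C_{E_2}\|f\|_{W^{1,\infty}}\le\delta .
\]
The $L^\infty$ bound on $f-f_\varepsilon$ uses the Lipschitz estimate exactly as in Theorem \ref{thm:errorHeatb}. Young's inequality then gives the bounds \eqref{eq:uEps} and \eqref{eq:fEps} on $\|u_\varepsilon\|_{W^{3,\infty}}$ and $\|f_\varepsilon\|_{W^{2,\infty}}$.

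\textbf{Step 2: network approximation.} Next, I invoke \cite[Theorem 25]{yang2025deepneuralnetworksgeneral} to produce networks $u_\theta$ and $f_{\bar\theta}$. I take $\mathcal W,\mathcal L$ large enough that $\mathcal W^{\frac1{d+1}}\mathcal L^{\frac1{d+1}}\ge\varepsilon^{-1-(d+1)/2}$. This yields
\[
\|u_\varepsilon-u_\theta\|_{H^2(Q_T)}+\|f_\varepsilon-f_{\bar\theta}\|_{L^\infty(Q_T)}\le \tilde C_0\mathcal W^{-\frac1{d+1}}\mathcal L^{-\frac1{d+1}}.
\]

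\textbf{Step 3: the interior residual.} For the weighted residual $\bar{\mathcal R}_Q^4$ I replace $\partial_t$ by $\partial_{tt}$ and split exactly as before. The mollification error and the network error are each bounded by $M|Q_T|^{-1/2}$ times a sum of terms; this uses the bound $|\phi|\le M$ from Assumption \ref{con:weight}. The bilinear term is handled by writing
\[
f u - f_\varepsilon u_\varepsilon = \hat f u + f_\varepsilon \hat u ,
\]
and analogously for the network pair, with $\|f_\varepsilon\|_{L^\infty}\le \|f\|_{L^\infty}+\delta$ and $\|f_{\bar\theta}\|_{L^\infty}$ bounded similarly. What remains of the residual at the exact pair $(u,f)$ are the terms $\|\phi-1\|_Q$ times $\|\partial_{tt}u\|_{L^\infty}$, $\|\partial_{x_ix_i}u\|_{L^\infty}$, $\|v(u)\|_{L^\infty}$ and $\|f\|_{L^\infty}\|u\|_{L^\infty}$. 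Squaring with $(a_1+\dots+a_{d+4})^2\le(d+4)\sum a_i^2$ and using the hypothesis on $\rho$ absorbs these terms into $\rho\|\mathcal R^4_{NN}\|^2_{NN}$.

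\textbf{The boundary, initial and terminal terms.} Here I follow the list in the proof of Theorem \ref{thm:errorWave}. Each initial or terminal difference is split as
\[
\phi_a w_\theta-\phi_b w_0=\phi_a(w_\theta-w_0)+(\phi_a-1)w_0-(\phi_b-1)w_0 ,
\]
where $w_0$ ranges over $u_0,u_1,z_0,z_1,\partial_{x_i}u_0,\partial_{x_i}z_0$. The conditions $\rho\ge3\lambda\|u_0\|^2_{W^{1,\infty}}$ (and the analogous ones for $u_1,z_0,z_1$) absorb the $(\phi-1)$ pieces, mirroring \eqref{eq:est_init}. On the lateral boundary I bound $\|\phi u_\theta\|_{L^2}$ and the $H^{1/2}$ norms of $\phi\,\partial_t u_\theta$ and $\phi\,\partial_{x_i}u_\theta$ by $CM^2\|u_\theta\|^2_{H^{3/2}(\Sigma_T)}$; the derivative bound on $\phi$ in Assumption \ref{con:weight} controls the product in $H^{1/2}$. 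Since $u=0$ on $\Sigma_T$, this equals $CM^2\|u_\theta-u\|^2_{H^{3/2}(\Sigma_T)}$. The trace theorem \cite[Theorem 3.37]{McLean2000} then bounds the initial, terminal and lateral differences by $\tilde C\|u_\theta-u\|^2_{H^2(Q_T)}\le C(\mathcal W^{-\frac1{d+1}}\mathcal L^{-\frac1{d+1}}+\delta)^2$. Summing all contributions gives the claimed bound, uniformly in the weight parameters.

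\textbf{Main obstacle.} The delicate point is the trace step. The $H^{1/2}(\Sigma_T)$ norms of $\partial_t u_\theta$ and of the tangential derivatives require a trace of order $3/2$, and $H^2(Q_T)\to H^{3/2}(\partial Q_T)$ is exactly the borderline trace. The same trace theorem also controls $\partial_t u_\theta$ and $\nabla u_\theta$ at $t=0,T$ in $L^2$, which needs traces of first derivatives of an $H^2$ function, i.e.\ $H^{1/2}$ traces. If this step needs more room, I would take $\varepsilon$ small enough that the $W^{2,\infty}$ network estimate (rather than only $H^2$) is available and use it directly, which controls all these traces pointwise on the bounded domain.
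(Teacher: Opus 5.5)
Your outline follows the paper's proof. You mollify $u$ and $f$ as in Theorem~\ref{thm:errorHeatb}, approximate with \cite[Theorem 25]{yang2025deepneuralnetworksgeneral}, and split the bilinear term. You absorb the $(\phi-1)$ pieces with the conditions on $\rho$, bound the lateral terms by $CM^2\|u_\theta\|^2_{H^{3/2}(\Sigma_T)}$, and finish with the trace theorem. One step, however, fails as you wrote it. You bound $\|\phi_{\tilde\theta^i}\partial_{x_i}u_\theta\|_{H^{1/2}(\Sigma_T)}$ by $CM\|u_\theta\|_{H^{3/2}(\Sigma_T)}$ and then use $u|_{\Sigma_T}=0$ to replace $u_\theta$ by $u_\theta-u$. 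But $\|\cdot\|_{H^{3/2}(\Sigma_T)}$ involves only $\partial_t$ and the tangential derivatives $\partial/\partial\sigma_j$, so it does not control the normal derivative. With the Cartesian $\partial_{x_i}$ printed in \eqref{eq:barRG4}, $\partial_{x_i}u_\theta$ tends on $\Sigma_T$ to $\partial_{x_i}u=n_i\,\partial_n u$. This is generically nonzero even though $u$ itself vanishes there. Take all weights $\equiv 1$, which Assumption~\ref{con:weight} permits. Then the loss is at least $\frac{\lambda}{|\Gamma|}\sum_i\|\partial_{x_i}u_\theta\|^2_{H^{1/2}(\Sigma_T)}$, whose limit is at least $\frac{\lambda}{|\Gamma|}\|\partial_n u\|^2_{L^2(\Sigma_T)}>0$. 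So a construction that approximates $u$ cannot make this term small.

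The fix is to read these terms as in the min--max formulation displayed just before \eqref{eq:barRQ4}, namely $\phi_{\tilde\theta^1}\partial_t u_\theta$ and $\phi_{\tilde\theta^i}\,\partial u_\theta/\partial\sigma_{i-1}$ for $i=2,\dots,d$. Your last paragraph already speaks of tangential derivatives, and item (3) of the paper's proof uses exactly this form. With that reading your argument goes through. Your remark that the derivative bound on $\phi$ in Assumption~\ref{con:weight} controls the product in $H^{1/2}$ supplies a detail the paper leaves implicit.

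Separately, your ``main obstacle'' is misidentified. $H^2\to H^{3/2}$ is the standard trace, far from the borderline case $s=1/2$. The real caveat, which the paper also glosses over, is that $Q_T$ has edges along $\{0,T\}\times\partial\Omega$. The trace onto $\Sigma_T$ should therefore be taken locally or after extending in $t$. Your fallback via the $W^{2,\infty}$ network estimate would not bypass this. It controls only $u_\theta-u_\varepsilon$, while the mollification error $u-u_\varepsilon$ is not small in $W^{2,\infty}$, so a trace theorem is still needed for that part.
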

	
	\begin{Remark}
		Similar to Theorem \ref{thm:errorWave}, to ensure the well-posedness and stability of the approximation scheme, we assume that \(u \in W^{2,\infty}(Q_T)\) and \(f \in W^{1,\infty}(Q_T)\).
	\end{Remark}
	
	\begin{proof}
		The proof of Theorem \ref{thm:errorWaveb} is analogous to that of Theorem \ref{thm:errorHeatb}. The differences comparing to the proof of Theorem \ref{thm:errorHeatb} are:
		\begin{enumerate}
			\item The time derivative \(\partial_t\) is modified into \(\partial_{tt}\).
			\item Repeat the estimate \eqref{eq:est_init} for \(u_1,z_1\) and all directional derivatives of \(u_0,z_0\).
			\item The boundary is estimated by
				\begin{align*}
					\|\phi_{\theta^4}u_\theta\|_{L^2(\Sigma_T)}^2+\|\phi_{\tilde\theta^1}\partial_tu_\theta\|^2_{H^{1/2}(\Sigma_T)}+\sum_{i=2}^{d}\bigl\|\phi_{\tilde\theta^i}\frac{\partial u_\theta}{\partial\sigma_{i-1}}\bigr\|^2_{H^{1/2}(\Sigma_T)}
					\le CM^2\|u_\theta\|^2_{H^{3/2}(\Sigma_T)}.
				\end{align*}
				\item We use the Trace Theorem for fractional Sobolev space \cite[Theorem 3.37]{McLean2000}, which yields
					\[
						\|u_\theta-u\|^2_{H^{3/2}(\Gamma)}\le \tilde C\|u_\theta-u\|_{H^2(Q_T)}^2.
\qedhere
					\]
		\end{enumerate}
	\end{proof}

	\section{Numerical results: High dimensional Problems}\label{Sec:Numerical}
	\subsection{Set up}

	We fix the dimension \(d = 10\). We consider several configurations of the domain \(\Omega\) and the controlled region \(\omega\).

	We set the constant \(\lambda = \rho = 1\) in the formulation
	\[\|\bar{\mathcal{R}}_Q\|_{Q}^2+\lambda\|\bar{\mathcal{R}}_{\Gamma}\|_{\Gamma}^2-\rho\|\mathcal{R}_{NN}\|_{NN}^2.\]

	In the numerical experiments, derivatives and the Laplace operator are approximated using Finite Difference (FD) schemes with step size $h = 0.001$.
	We denotes the time derivative, second-order derivative and the Laplace in the FD schemes by \(\bar\partial_t,\bar\partial_{tt},\bar{\Delta}_x\), respectively.

	Now, we set
	\[
	v(u) := -u + u^2.
	\]
The numerical experiments are conducted in a locally Lipschitz regime. The theoretical assumptions are not directly satisfied by this choice of \(v\), but the experiments remain stable because the learned states remain uniformly bounded.

	\begin{itemize}
		\item \textbf{Internal control:}
		\begin{itemize}
			\item[$\circ$] \textit{Heat equation:} We set $T = 1$ and define
			\begin{equation}\label{eq:heat_disc}
				\mathscr{P}_h u := \bar{\partial}_t u - \bar{\Delta}_x u + v(u).
			\end{equation}
			The discrete equation is
			\[
			\mathscr{P}_h u = f \mathbf{1}_\omega.
			\]
			
			\item[$\circ$] \textit{Wave equation:} We set $T = 3$ and define
			\begin{equation}\label{eq:wave_disc}
				\mathscr{P}_w u := \bar{\partial}_{tt} u - \bar{\Delta}_x u + v(u).
			\end{equation}
			The discrete equation is
			\[
			\mathscr{P}_w u = f \mathbf{1}_\omega.
			\]
		\end{itemize}
		
		\item \textbf{Bilinear control:}
		\begin{itemize}
			\item[$\circ$] \textit{Heat equation:} We set $T = 1$ and use $\mathscr{P}_h$ as in \eqref{eq:heat_disc}. The discrete equation is
			\[
			\mathscr{P}_h u = f u.
			\]
			
			\item[$\circ$] \textit{Wave equation:} We set $T = 3$ and use $\mathscr{P}_w$ as in \eqref{eq:wave_disc}. The discrete equation is
			\[
			\mathscr{P}_w u = f u.
			\]
		\end{itemize}
	\end{itemize}

	We next describe the architectures of the neural networks.
	
	We recall the ReLU activation function $\sigma:\mathbb{R}\to[0,\infty)$, defined by
	\begin{align}\label{ReLU}
		\mathrm{ReLU}(x) := \max\{0,x\}.
	\end{align}
	
	To ensure twice differentiability, we use the activation function $\mathrm{ReLU}^3$. This function, denoted by $\sigma_3$, is defined by
	\begin{align}\label{ReLU3}
		\sigma_3(x) := \mathrm{ReLU}(x)^3.
	\end{align}
	
	We also recall the sigmoid activation function $\sigma_s:\mathbb{R}\to(0,1)$, defined by
	\begin{align}\label{sigmoid}
		\sigma_s(x) := \frac{1}{1+e^{-x}}.
	\end{align}

	We consider two types of neural networks with different activation functions: one for $u_\theta$ and $f_{\bar{\theta}}$, and the other for the weight networks $\phi_{\theta^i}$.
	
	\begin{itemize}
		\item \textbf{Solution and control networks:}
		These networks have width $\mathcal{W} = 100$ and depth $\mathcal{L} = 3$, and use $\sigma_3$ as the activation function.
		
		\item \textbf{Weight networks:}
			These networks have width $\mathcal{W} = 40$ and depth $\mathcal{L} = 3$. We use $\sigma_3$ as the activation function in the first two hidden layers and $\sigma_s$ in the final hidden layer. This structure allows us to control the range of the weight networks. In the numerical experiments, the output of the sigmoid layer is rescaled affinely to \([0.2,5]\) via \(\phi=0.2+4.8\sigma_s(\cdot)\). The aforementioned structure satisfies the condition mentioned in Remark \ref{rmk:uniformboundweight}.
	\end{itemize}

	Finally, we describe the training and testing procedures.
	
	{\bf Training procedures:} We consider eight different configurations. For each configuration, we perform $10000$ training iterations. At each iteration, the following steps are carried out:
	\begin{enumerate}
		\item We randomly generate $N_1 = 1000$ training points $(t_i,x_i)$ in the interior of the domain, and $N_1$ training points $(\hat{t}_i,\hat{x}_i)$ on the boundary. We also generate $N_2 = 100$ training points $(0,\bar{x}_i)$ for the initial condition and $N_2$ training points $(T,\tilde{x}_i)$ for the terminal condition.
		
		\item These training points are used to evaluate the loss functions in \eqref{eq:lossheat} and \eqref{eq:losswave}, which correspond to discrete approximations of
		\[
		\|\bar{\mathcal{R}}_Q\|_{Q}^2
		+ \|\bar{\mathcal{R}}_\Gamma\|_{\Gamma}^2
		- \|\mathcal{R}_{NN}\|_{NN}^2,
		\]
		where $\bar{\mathcal{R}}_Q$, $\bar{\mathcal{R}}_\Gamma$, and $\mathcal{R}_{NN}$  are defined in Sections~\ref{Sec:internalsettings} and~\ref{Sec:bilinearsettings}. For simplicity, we use the $L^2$-norm formulation of the loss.

	\item We then employ the ADAM optimizer implemented in the \texttt{torch} package to solve the resulting min--max optimization problem. We set the learning rate at \(0.001\).
	\end{enumerate}

	{\bf Testing errors:} We randomly generate $N_3 = 10000$ test points in the interior and on the boundary, and $N_4 = 1000$ test points for the initial and terminal conditions. These test points are used to evaluate the approximation error in the numerical experiments.

	{\bf Presenting results:} We display the training loss as a function of the number of iterations
and report the testing error for each numerical experiment.  In the
high-dimensional tests of Sections~\ref{Sec:Internal} and
\ref{Sec:Bilinear}, the learned control and the corresponding state
are computed but are not displayed, due to the high-dimensional nature
of the problem.  The reported testing errors in these sections are
therefore  {\it residual-based quantities, measuring the PDE residual together
with the boundary, initial, and terminal constraint errors}.  In the
lower-dimensional tests of Section~\ref{Sec:NumericalOtherMethodFull},
we instead  {\it evaluate the reconstruction errors directly against known
manufactured solutions.}
	
	\subsection{Internal control problems}\label{Sec:Internal}
	
	\subsubsection{Heat equation}\label{Sec:Heateq}
	
	For the heat equation, the loss \eqref{eq:lossheat} is numerically evaluated using training points and the \(H^{1/2}\)-norm is reduced to \(L^2\)-norm.

	For the standard PINN formulation, we set $\phi_{\theta^j},\phi_{\bar\theta^j} \equiv 1$ for all $j$.

	Using the test points, we numerically evaluate the error as
	\begin{align*}
		H_h
		&= \|\mathscr{P}_h u - f \mathbf{1}_\omega\|_{L^2([0,1]\times\Omega)}
		+ \|u\|_{L^2([0,1]\times\partial\Omega)}
		+ \|u(0,\cdot) - u_0\|_{L^2(\Omega)}
		+ \|u(T,\cdot) - z_0\|_{L^2(\Omega)}.
	\end{align*}
	
	For convenience, we refer to $\|\mathscr{P}_h u - f \mathbf{1}_\omega\|_{L^2([0,1]\times\Omega)}$ as the \emph{equation error}, and the remaining terms as the \emph{boundary error}.
	
	\begin{enumerate}
		\item \textbf{Situation 1:} The domain $\Omega$ is the unit ball in $\mathbb{R}^d$, and the control region is given by
		\[
		\omega = \Big(-\sqrt{\frac{1}{d}-\frac{1}{2d^2}},\sqrt{\frac{1}{d}-\frac{1}{2d^2}}\Big)^{d}.
		\]
		
		We consider the initial condition $u_0 = 0$ and the terminal condition
		\[
		z_0 = (e-1)\sin\!\left(\frac{\pi}{2}(1-|x|)^{2.5}\right).
		\]

		In Figure~\ref{fig:heat1}, we plot the total training loss as a function of the number of iterations for both methods (blue: PINN, red: WeightedPINN).
		
		\begin{figure}[H]
			\centering
			\includegraphics[width=.6\textwidth]{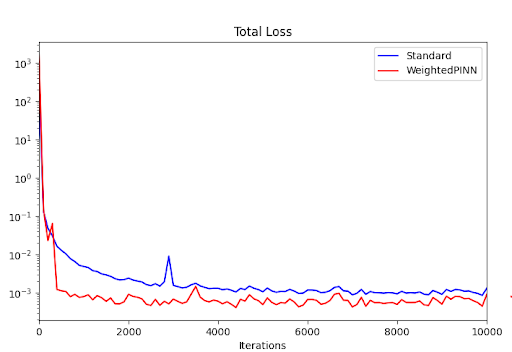}
			\caption{Training loss over iterations in Situation 1}
			\label{fig:heat1}
		\end{figure}
		
		We observe in Figure~\ref{fig:heat1} that the initial loss is at \(10^3\) and decreases to approximately \(10^{-3}\) by the \(10000\)-th iteration, demonstrating the effectiveness of both the standard PINN and WeightedPINN.
		
		We further observe that the WeightedPINN achieves a lower total training loss compared to the standard PINN.
		
		Table~\ref{tab:heat1} reports the test errors at the final iteration.
		
		\begin{table}[H]
			\centering
			\begin{tabular}{c|c|c|c}
				& Total & Eqn. & Bnd. \\
				\hline
				Standard & \(6.082\times10^{-2}\) & \(2.004\times10^{-2}\) & \(4.078\times10^{-2}\) \\
				\hline
				WeightedPINN & \(4.411\times10^{-2}\) & \(3.425\times10^{-3}\) & \(4.068\times10^{-2}\)
			\end{tabular}
			\caption{Test errors at the final iteration in Situation 1}
			\label{tab:heat1}
		\end{table}
		
		From Table~\ref{tab:heat1}, both methods achieve relatively small overall errors. However, the WeightedPINN significantly reduces the equation error, while the boundary error remains comparable between the two approaches.

		\item \textbf{Situation 2:} The domain is $\Omega = (-1,1)^d$, and the control region $\omega$ is the unit ball in $\mathbb{R}^d$. We consider the initial condition
		\[
		u_0 = \prod_{i=1}^{d} (1 - |x_i|),
		\]
		and the terminal condition
		\[
		z_0 = e \prod_{i=1}^{d} (1 - |x_i|).
		\]
		
		Similar to Situation 1, WeightedPINN achieves improved numerical performance in this setting.
		
		In Figure~\ref{fig:heat2}, we plot the total training loss as a function of the number of iterations for both methods (blue: PINN, red: WeightedPINN).
		
		\begin{figure}[H]
			\centering
			\includegraphics[width=0.6\textwidth]{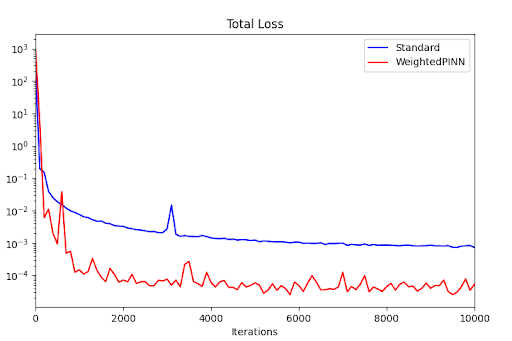}
			\caption{Training loss over iterations in Situation 2}
			\label{fig:heat2}
		\end{figure}
		
		We observe in Figure~\ref{fig:heat2} that the initial loss is at \(10^3\). The loss of the standard PINN method decreases to approximately \(10^{-3}\) by the \(10000\)-th iteration and the loss of the WeightedPINN method reaches approximately \(10^{-4}\) by the \(10000\)-th iteration. This again shows the effectiveness of both methods, especially the WeightedPINN.

		Table~\ref{tab:heat2} reports the test errors at the final iteration.
		
		\begin{table}[H]
			\centering
			\begin{tabular}{c|c|c|c}
				&Total&Eqn.&Bnd.\\
				\hline
				Standard&\(5.114\times10^{-2}\)&\(2.733\times10^{-2}\)&\(2.382\times10^{-2}\)\\
				\hline
				WeightedPINN&\(2.299\times10^{-2}\)&\(3.171\times10^{-3}\)&\(1.982\times10^{-2}\)
			\end{tabular}
			\caption{Test errors at the final iteration in Situation 2}
			\label{tab:heat2}
		\end{table}

	\end{enumerate}
	
	\subsubsection{Wave equation}\label{Sec:Waveeq}
	
	For the wave equation, the loss \eqref{eq:losswave} is numerically evaluated using the training points and the \(H^{3/2}\)-norm on \(\Sigma_T\), the \(H^1\)-norms on \(u_0,z_0\) are reduced to \(L^2\)-norms.

	Similarly to Section~\ref{Sec:Heateq}, for the wave equation we numerically evaluate the error as
	\begin{align*}
		H_w
		&= \|\mathscr{P}_w u - f \mathbf{1}_{\omega}\|_{L^2([0,T]\times\Omega)}
		+ \|u\|_{L^2([0,T]\times\partial\Omega)} \\
		&\quad
		+ \|(u(0,\cdot),\bar{\partial}_t u(0,\cdot))  - (u_0,u_1)\|_{L^2(\Omega)\times L^2(\Omega)}
		+ \|(u(T,\cdot),\bar{\partial}_t u(T,\cdot)) - (z_0,z_1)\|_{L^2(\Omega)\times L^2(\Omega)}
	\end{align*}
	We refer to $\|\mathscr{P}_w u - f \mathbf{1}_{\omega}\|_{L^2([0,T]\times\Omega)}$ as the equation error, and to the remaining terms collectively as the boundary error.
	
	\begin{enumerate}
		\item \textbf{Situation 3:} The domain $\Omega$ is the unit ball in $\mathbb{R}^d$, and the control region $\omega$ is defined such that $\Omega \setminus \omega$ is the closed ball centered at $(1/d,\dots,1/d)$ with radius $3/4$. We choose $\omega$ so that Assumption~\ref{con:wave} is satisfied. For simplicity, we further assume that $\omega$ contains the entire boundary $\partial\Omega$.
		
		We consider the initial and terminal conditions
		\begin{align*}
			u_0 &= u_1 = 0, \\
			z_0 &= (e-1)\sin\!\left(\frac{\pi}{2}(1-|x|)^{2.5}\right), \\
			z_1 &= 2e\,\sin\!\left(\frac{\pi}{2}(1-|x|)^{2.5}\right).
		\end{align*}
		
		Figure~\ref{fig:wave1} shows the total training loss in Situation 3 (blue: PINN, red: WeightedPINN).
		
		\begin{figure}[H]
			\centering
			\includegraphics[width=0.6\textwidth]{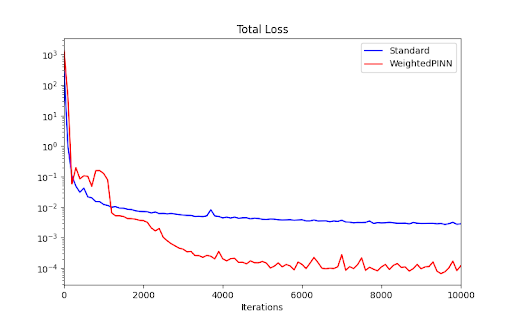}
			\caption{Training loss over iterations in Situation 3}
			\label{fig:wave1}
		\end{figure}
		
		In this situation, the WeightedPINN requires more iterations to stabilize; however, it eventually achieves a lower training loss compared to the standard PINN.
		
		We observe in Figure~\ref{fig:wave1} that the loss of the standard PINN method can only reach approximately \(10^{-2}\) at \(10000\)-th iteration. On the other hand, the loss of WeightedPINN can reach approximately \(10^{-4}\) at \(10000\)-th iteration.
		
		Table~\ref{tab:wave1} reports the test errors at the final iteration.
		
		\begin{table}[H]
			\centering
			\begin{tabular}{c|c|c|c}
				& Total & Eqn. & Bnd. \\
				\hline
				Standard & \(1.745\times10^{-1}\) & \(3.611\times10^{-2}\) & \(1.384\times10^{-1}\) \\
				\hline
				WeightedPINN & \(4.833\times10^{-2}\) & \(3.505\times10^{-3}\) & \(4.482\times10^{-2}\)
			\end{tabular}
			\caption{Test errors at the final iteration in Situation 3}
			\label{tab:wave1}
		\end{table}
		
		In Situation 3, the WeightedPINN demonstrates significant improvements in both the equation error and the boundary error.

		\item \textbf{Situation 4:} The domain is $\Omega = (-1,1)^d$, and the control region $\omega$ is defined such that
		\[
		\Omega \setminus \omega
		=
		\Big[-\sqrt{\frac{1}{d}-\frac{1}{2d^2}},\sqrt{\frac{1}{d}-\frac{1}{2d^2}}\Big]^d.
		\]
		
		We consider the initial and terminal conditions:
		\begin{align*}
			u_0 &= u_1 = \prod_{i=1}^{d}(1 - |x_i|), \\
			z_0 &= z_1 = e \prod_{i=1}^{d}(1 - |x_i|).
		\end{align*}

		Figure~\ref{fig:wave2} shows the total training loss in Situation 4 (blue: PINN, red: WeightedPINN).
		
		\begin{figure}[H]
			\centering
			\includegraphics[width=0.6\textwidth]{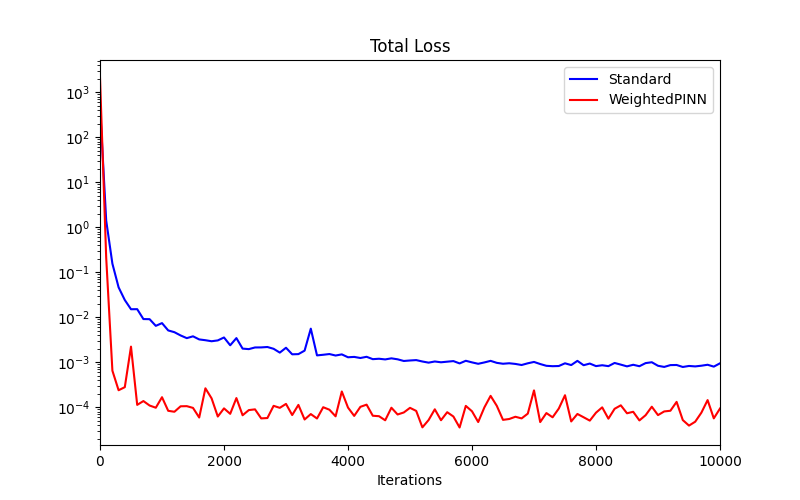}
			\caption{Training loss over iterations in Situation 4}
			\label{fig:wave2}
		\end{figure}
		
		In figure~\ref{fig:wave2}, after \(10000\) iterations, the loss of the standard PINN method reaches approximately \(10^{-3}\) and the loss of the WeightedPINN method reaches approximately \(10^{-4}\).
		
		Table~\ref{tab:wave2} reports the test errors at the final iteration.
		
		\begin{table}[H]
			\centering
			\begin{tabular}{c|c|c|c}
				& Total & Eqn. & Bnd. \\
				\hline
				Standard & \(7.561\times10^{-2}\) & \(3.047\times10^{-2}\) & \(4.514\times10^{-2}\) \\
				\hline
				WeightedPINN & \(4.225\times10^{-2}\) & \(2.566\times10^{-3}\) & \(3.968\times10^{-2}\)
			\end{tabular}
			\caption{Test errors at the final iteration in Situation 4}
			\label{tab:wave2}
		\end{table}
		
		The WeightedPINN consistently achieves better numerical performance in this setting.

	\end{enumerate}
	
	\subsection{Bilinear control problems}\label{Sec:Bilinear}

	The loss functions and error metrics for the bilinear control problems are identical to those in Section \ref{Sec:Internal}, with the only difference being that \(f_{\bar{\theta}}(t_i,x_i)\mathbf 1_\omega(x_i)\) is replaced by \(f_{\bar{\theta}}(t_i,x_i)u_\theta(t_i,x_i)\) throughout. We test the same four situations with the same domain and initial/terminal conditions. For brevity, we directly report the numerical results.
	
	\subsubsection{Heat equation}
	
	\begin{enumerate}
		\item \textbf{Situation 5:} We consider the same domain $\Omega$, initial condition $u_0$, and terminal condition $z_0$ as in Situation 1 of Section~\ref{Sec:Heateq}.
		
		Figure~\ref{fig:bheat1} shows the total training loss in Situation 5 (blue: PINN, red: WeightedPINN).
		
		\begin{figure}[H]
			\centering
			\includegraphics[width=.6\textwidth]{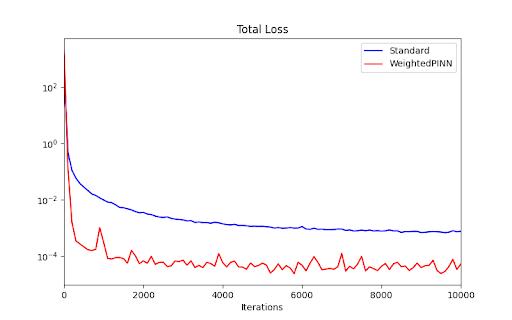}
			\caption{Training loss over iterations in Situation 5}
			\label{fig:bheat1}
		\end{figure}
		
		Table~\ref{tab:bheat1} reports the test errors at the final iteration.
		
		\begin{table}[H]
			\centering
			\begin{tabular}{c|c|c|c}
				& Total & Eqn. & Bnd. \\
				\hline
				Standard & \(4.801\times10^{-2}\) & \(2.572\times10^{-2}\) & \(2.228\times10^{-2}\) \\
				\hline
				WeightedPINN & \(2.263\times10^{-2}\) & \(2.780\times10^{-3}\) & \(1.985\times10^{-2}\)
			\end{tabular}
			\caption{Test errors at the final iteration in Situation 5}
			\label{tab:bheat1}
		\end{table}

		\item \textbf{Situation 6:} We consider the same domain $\Omega$, initial condition $u_0$, and terminal condition $z_0$ as in Situation 2 of Section~\ref{Sec:Heateq}.
		
		Figure~\ref{fig:bheat2} shows the total training loss in Situation 6 (blue: PINN, red: WeightedPINN).
		
		\begin{figure}[H]
			\centering
			\includegraphics[width=.6\textwidth]{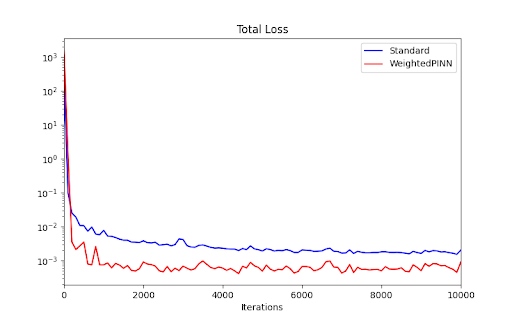}
			\caption{Training loss over iterations in Situation 6}
			\label{fig:bheat2}
		\end{figure}
		
		Table~\ref{tab:bheat2} reports the test errors at the final iteration.
		
		\begin{table}[H]
			\centering
			\begin{tabular}{c|c|c|c}
				& Total & Eqn. & Bnd. \\
				\hline
				Standard & \(8.053\times10^{-2}\) & \(3.106\times10^{-2}\) & \(4.948\times10^{-2}\) \\
				\hline
				WeightedPINN & \(4.358\times10^{-2}\) & \(2.950\times10^{-3}\) & \(4.063\times10^{-2}\)
			\end{tabular}
			\caption{Test errors at the final iteration in Situation 6}
			\label{tab:bheat2}
		\end{table}

	\end{enumerate}
	
	\subsubsection{Wave equation}
	
	\begin{enumerate}
		\item \textbf{Situation 7:} We consider the same domain $\Omega$, initial condition, and terminal condition as in Situation 3 of Section~\ref{Sec:Waveeq}.
		
		Figure~\ref{fig:bwave1} shows the total training loss in Situation 7 (blue: PINN, red: WeightedPINN).
		
		\begin{figure}[H]
			\centering
			\includegraphics[width=.6\textwidth]{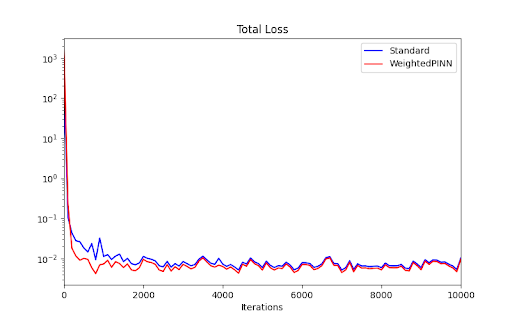}
			\caption{Training loss over iterations in Situation 7}
			\label{fig:bwave1}
		\end{figure}
		
		Table~\ref{tab:bwave1} reports the test errors at the final iteration.
		
		\begin{table}[H]
			\centering
			\begin{tabular}{c|c|c|c}
				& Total & Eqn. & Bnd. \\
				\hline
				Standard & \(1.986\times10^{-1}\) & \(2.350\times10^{-2}\) & \(1.751\times10^{-1}\) \\
				\hline
				WeightedPINN & \(1.755\times10^{-1}\) & \(1.005\times10^{-2}\) & \(1.655\times10^{-1}\)
			\end{tabular}
			\caption{Test errors at the final iteration in Situation 7}
			\label{tab:bwave1}
		\end{table}

		\item \textbf{Situation 8:} We consider the same domain $\Omega$, initial condition, and terminal condition as in Situation 4 of Section~\ref{Sec:Waveeq}.
		
		Figure~\ref{fig:bwave2} shows the total training loss in Situation 8 (blue: PINN, red: WeightedPINN).
		
		\begin{figure}[H]
			\centering
			\includegraphics[width=.6\textwidth]{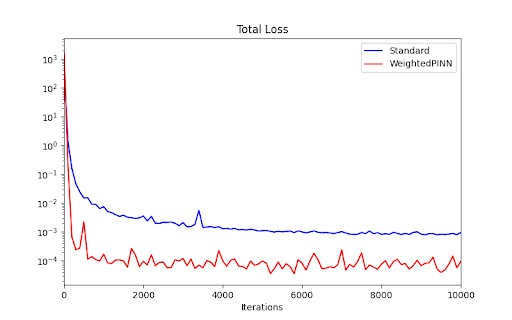}
			\caption{Training loss over iterations in Situation 8}
			\label{fig:bwave2}
		\end{figure}
		
		Table~\ref{tab:bwave2} reports the test errors at the final iteration.
		
		\begin{table}[H]
			\centering
			\begin{tabular}{c|c|c|c}
				& Total & Eqn. & Bnd. \\
				\hline
				Standard & \(7.538\times10^{-2}\) & \(3.024\times10^{-2}\) & \(4.514\times10^{-2}\) \\
				\hline
				WeightedPINN & \(4.258\times10^{-2}\) & \(2.824\times10^{-3}\) & \(3.975\times10^{-2}\)
			\end{tabular}
			\caption{Test errors at the final iteration in Situation 8}
			\label{tab:bwave2}
		\end{table}

	\end{enumerate}
	\textbf{Conclusion:} Under the settings of our numerical experiments, both the standard PINN and the WeightedPINN achieve small total losses, demonstrating the effectiveness of the proposed approach for solving the considered control problems.
	
	Moreover, across the high-dimensional situations considered here for
both internal and bilinear control problems, WeightedPINN yields lower
residual-based testing errors than the standard PINN baseline, with
the most pronounced improvement usually appearing in the equation
residual.

\section{Numerical Results: Comparison with other PINN-Type Methods}\label{Sec:NumericalOtherMethodFull}

  \subsection{Comparison with  Loss-balanced PINN, RAR-D PINN and Causal PINN Methods}\label{Sec:NumericalOtherMethod}

The high-dimensional experiments of Sections~\ref{Sec:Internal} and
\ref{Sec:Bilinear} indicate that WeightedPINN yields smaller
residual-based testing errors than the standard PINN baseline for the
high-dimensional control problems considered. 
To probe the operator-level adaptive mechanism of WeightedPINN in a
controlled one-dimensional setting, we now design a benchmark in
which the residual operators \(\partial_t u\), \(\partial_{xx} u\),
\(v(u)\), and \(f\,u\) have markedly different magnitudes by
construction. We then compare WeightedPINN against four reference
methods: the standard PINN of \cite{Raissi2019}, the loss-balanced
PINN of \cite{WangTengPerdikaris2021}, the residual-adaptive (RAR-D)
sampler of \cite{WuZhuTan2023_AdaptiveSamplingPINN}, and the causal
PINN of \cite{WangSankaranPerdikaris2024_CausalPINN}. The
self-adaptive PINN of \cite{McClennyBragaNeto2023}, which acts at
the level of individual collocation points rather than at the
operator level, is treated separately. {\it In contrast to Sections~\ref{Sec:Internal} and
\ref{Sec:Bilinear}, where the learned control and state are not
displayed due to the high-dimensional architecture, in the present
section both the state and the control are presented, and their
relative reconstruction errors are computed.}

\subsubsection{A singularly perturbed bilinear-control heat problem.}
We consider the bilinear-control heat equation in
\((0,T)\times\Omega\) with \(T=1\) and \(\Omega=(0,1)\), in which
the diffusion coefficient is a small parameter \(\varepsilon\):
\begin{equation}
\label{eq:stiff-pde}
\begin{aligned}
\partial_t u - \varepsilon\,\partial_{xx} u + v(u) &= f\,u
&& \text{in }(0,T)\times\Omega,\\
u(t,0) = u(t,1) &= 0 && \text{for }t\in(0,T),\\
u(0,x) &= u_0(x) && \text{in }\Omega,\\
u(T,x) &= z_0(x) && \text{in }\Omega,
\end{aligned}
\end{equation}
with \(v(u) = -u + u^2\), as in the rest of the paper, and
\(\varepsilon = 10^{-2}\). We use the  manufactured solution
\[
u^\star(t,x) = \sin(\pi x)\exp\!\bigl(\sin(2\pi t)+t\bigr),
\]
which yields the reference control
\[
f^\star(t,x) = 2\pi\cos(2\pi t) + \varepsilon\,\pi^2 - 1 + u^\star(t,x),
\]
together with \(u_0(x) = \sin(\pi x)\) and \(z_0(x) = e\sin(\pi x)\).

\subsubsection{Operator imbalance.} The motivation for this choice of
\(\varepsilon\) is the following. Along the truth,
\begin{equation*}
\frac{\partial_t u^\star}{u^\star}
= 2\pi\cos(2\pi t)+1,\qquad
\frac{\varepsilon\,\partial_{xx}u^\star}{u^\star}
= -\varepsilon\,\pi^2,\qquad
\frac{v(u^\star)}{u^\star}
= -1+u^\star,\qquad
\frac{f^\star u^\star}{u^\star}
= f^\star,
\end{equation*}
so that the magnitude of the diffusion contribution
\(|\varepsilon\,\partial_{xx} u^\star|\) is of order
\(\varepsilon\,\pi^2 \approx 10^{-1}\), while the magnitudes of the
time derivative \(|\partial_t u^\star|\) and the control term
\(|f^\star u^\star|\) are bounded by \(2\pi+1\approx 7\) times
\(|u^\star|\). In other words, the diffusion operator is roughly
\emph{seventy times smaller} than the time derivative and the control
term in the residual. This is precisely the type of
\emph{operator-magnitude imbalance} that WeightedPINN is designed to
handle through its operator-level adaptive weighting.

\subsubsection{Common architecture and training.}
Each method shares the same backbone: the state network
\(u_\theta\) and the control network \(f_{\bar\theta}\) are MLPs of
width \(48\) and depth \(3\) with \(\tanh\) activation. WeightedPINN has in addition nine weight networks: one per operator
component
(\(\phi_{\theta^1}\) on \(\partial_t u\), \(\phi_{\bar\theta^1}\) on
\(\partial_{xx}u\), \(\phi_{\theta^2}\) on \(v(u)\), and
\(\phi_{\theta^3}\) on \(fu\)), separate weights for the two boundary
components \(x=0\) and \(x=1\), one weight for the initial constraint,
one weight for the terminal constraint, and one weight for the
interior observation term, each of width \(20\) and depth \(3\). The weight networks output values in the range \(\phi\in[0.2,5]\),
with initialization at \(\phi=1\). The min--max formulation
of WeightedPINN is solved with ADAM, learning rate \(10^{-3}\) on
the state and control networks and \(2\times 10^{-4}\) on the weight
networks, with penalty parameter \(\rho = 8\). All methods are
trained for \(4{,}000\) ADAM iterations.

To anchor the otherwise non-unique bilinear control reconstruction,
each method receives the same fixed set of \(40\) interior
observations of \(u^\star\), with relative observation weight equal
to the PDE residual weight. The four reference methods are
implemented as in Subsection~\ref{Sec:PriorPINN}: the loss-balanced
PINN updates the constraint-loss multiplier every \(50\) iterations
according to the gradient-norm rule of
\cite{WangTengPerdikaris2021}; the RAR-D PINN resamples the interior
collocation set every \(500\) iterations according to a probability
proportional to the squared residual; the causal PINN partitions
\((0,T)\) into \(16\) chunks and applies the temporal causal
weighting \(w_k = \exp(-\varepsilon_c\sum_{j<k} L_j)\) with
\(\varepsilon_c = 1\).
In this Section, we use Automatic Differentiation  (AD) instead of  Finite Differences (FD).

\subsubsection{Reported quantities.} For each method we report the
relative \(L^2\) error of the recovered state and recovered control
against the manufactured solution,
\[
\mathrm{u\_err}
=\frac{\|u_\theta-u^\star\|_{L^2(Q_T)}}{\|u^\star\|_{L^2(Q_T)}},
\qquad
\mathrm{f\_err}
=\frac{\|f_{\bar\theta}-f^\star\|_{L^2(Q_T)}}{\|f^\star\|_{L^2(Q_T)}},
\]
averaged over \(3\) independent random runs.
Table~\ref{tab:1d-stiff-bench} reports the mean and the standard
deviation across independent runs for each method.

\begin{table}[h]
\centering
\renewcommand{\arraystretch}{1.2}
\begin{tabular}{l|c|c}
\hline
\textbf{Method} & $\mathrm{u\_err}$ (mean $\pm$ std)
                & $\mathrm{f\_err}$ (mean $\pm$ std)\\
\hline
Standard PINN
& $6.87\times 10^{-2}\pm 3.4\times 10^{-2}$
& $2.03\times 10^{-1}\pm 7.1\times 10^{-2}$\\
\hline
Loss-balanced PINN
& $8.01\times 10^{-2}\pm 4.0\times 10^{-2}$
& $2.25\times 10^{-1}\pm 8.6\times 10^{-2}$\\
\hline
RAR-D PINN
& $8.15\times 10^{-2}\pm 3.1\times 10^{-2}$
& $2.39\times 10^{-1}\pm 5.1\times 10^{-2}$\\
\hline
Causal PINN
& $7.19\times 10^{-2}\pm 4.7\times 10^{-2}$
& $2.11\times 10^{-1}\pm 1.0\times 10^{-1}$\\
\hline
\textbf{WeightedPINN}
& $\mathbf{6.37\times 10^{-2}\pm 3.2\times 10^{-2}}$
& $\mathbf{1.94\times 10^{-1}\pm 6.8\times 10^{-2}}$\\
\hline
\end{tabular}
\caption{Singularly perturbed 1D bilinear-control heat benchmark
(\(\varepsilon = 10^{-2}\)) with manufactured solution
\(u^\star(t,x)=\sin(\pi x)\exp(\sin(2\pi t)+t)\). Relative \(L^2\)
errors against the truth, mean \(\pm\) standard deviation over
\(3\) independent runs, after \(4{,}000\) ADAM iterations. The smallest mean in
each column is in bold; WeightedPINN attains both.}
\label{tab:1d-stiff-bench}
\end{table}

\subsubsection{Discussion.}
WeightedPINN attains the smallest mean state error and the smallest
mean control error among the methods considered in this benchmark.
Quantitatively, on the state error, it improves over the standard
PINN by about \(7\%\), over Causal PINN by about \(11\%\), and over
Loss-balanced and RAR-D PINN by about \(20\%\); on the control
error, the corresponding improvements are about \(4\%\), \(8\%\),
\(14\%\), and \(19\%\). The standard deviation of WeightedPINN
across independent runs is also among the smallest of the five methods, which
indicates that the operator-level adaptive weighting does not come
at the cost of training instability.

A natural explanation for the WeightedPINN advantage in this benchmark
is the operator-magnitude imbalance discussed above. In a standard PINN, the squared residual
\[
\bigl(\partial_t u_\theta
 - \varepsilon\,\partial_{xx} u_\theta
 + v(u_\theta) - f_{\bar\theta}\,u_\theta\bigr)^2
\]
is dominated, in magnitude and in gradient signal, by the
\(\partial_t u_\theta\) and \(f_{\bar\theta}\,u_\theta\) contributions,
whose values are roughly \(70\) times larger than the
\(\varepsilon\,\partial_{xx} u_\theta\) contribution. The optimizer of the standard PINN may therefore primarily reduce the
time-derivative and control-term contributions, while a sizeable error
in \(\partial_{xx}u_\theta\) produces only a small increase in the
squared loss and may consequently remain poorly resolved.

The loss-balanced, residual-adaptive-sampling, and causal-weighting
methodologies act on the residual as a scalar quantity, or on a
scalar quantity localized in time in the causal case, and therefore
do not explicitly rescale the individual operator components.
WeightedPINN, in contrast, has independent space--time-dependent
weights
\(\phi_{\theta^1},\phi_{\bar\theta^1},\phi_{\theta^2},\phi_{\theta^3}\)
on the four operators, taking values in \([0.2,5]\).  The maximization
step in the min--max problem can increase the weight associated with
the diffusion component in regions where the diffusion residual is
poorly resolved, thereby amplifying its contribution to the loss and
exposing the operator-magnitude imbalance to the minimization step.
This encourages the minimization step to reduce the diffusion residual
along with the other operator components.

This mechanism is also consistent with the behavior observed in the
high-dimensional experiments of Sections~\ref{Sec:Internal} and
\ref{Sec:Bilinear}, where the spatial dimension is \(d=10\) and the
Laplacian decomposes into \(d\) directional contributions whose
magnitudes may vary across the domain.  The fact that WeightedPINN
already improves the reconstruction in the present one-dimensional
benchmark, where operator imbalance is built in by construction, is
consistent with interpreting the high-dimensional residual-error
improvements as being connected to the operator-level adaptive
mechanism.

\subsubsection{Summary.}
The benchmark of this subsection confirms two complementary
findings.
\begin{enumerate}[label=(\roman*),leftmargin=2.4em]
\item \textbf{WeightedPINN performs better than} the standard PINN of
\cite{Raissi2019}, the loss-balanced PINN of
\cite{WangTengPerdikaris2021}, the RAR-D sampler of
\cite{WuZhuTan2023_AdaptiveSamplingPINN}, and the causal PINN of
\cite{WangSankaranPerdikaris2024_CausalPINN} on a one-dimensional
bilinear control problem in a singularly perturbed regime, on both
the state and control errors.
\item The results are consistent with the interpretation that the
operator-level adaptive weights help expose and compensate for
operator-magnitude imbalances in the PDE residual.  This effect is
explicitly built into the present 1D stiff benchmark and appears to be
consistent with the residual-error improvements observed in the
high-dimensional experiments of Sections~\ref{Sec:Internal} and
\ref{Sec:Bilinear}.  By contrast, the comparison methods considered
here act on the residual as a scalar quantity and therefore do not
explicitly rescale the individual operator components.
\end{enumerate}

\subsection{Comparison with Control-Specific PINN
Methods}
\label{Sec:NumericalControlPINN}
 
Section~\ref{Sec:NumericalOtherMethod} compared WeightedPINN against
\emph{generic} adaptive PINN methodologies that were not specifically
designed for control problems. To complement that benchmark, we now
compare WeightedPINN against PINN methodologies that are
explicitly built for the optimal control of PDEs:
\begin{enumerate}[label=(\arabic*),leftmargin=2.2em]
\item the direct cost-based PINN of Mowlavi and Nabi
\cite{MowlaviNabi2023JCP};
\item the Control PINN of Barry-Straume, Sarshar, Popov, and Sandu
\cite{BarryStraumeSarsharPopovSandu2022}, which couples state and
control through optimality conditions in a one-stage architecture;
\item the OCP-PINN of Alla, Bertaglia, and Calzola
\cite{AllaBertagliaCalzola2025}, which derives optimality conditions
from the Lagrangian multipliers and represents the state, the
adjoint, and the control by a single PINN;
\item the indirect PINN of Zhang, Liu, Alla, Darbon, and Karniadakis
\cite{ZhangLiuAllaDarbonKarniadakis2026}, which enforces a
first-order Pontryagin-type optimality system.
\end{enumerate}
 Similar to  Section~\ref{Sec:NumericalOtherMethod}, the state and control errors are also computed. 
\subsubsection{Control reconstruction versus PDE-constrained optimal
control}
\label{Sec:reconstruction-vs-optcontrol}
 
The four methods listed above were designed for \emph{PDE-constrained
optimal control problems}, in which the unknowns
\((u,f)\) minimize a prescribed cost objective
\begin{align}\label{eq:Jocp}
J(u,f)
=
\tfrac{1}{2}\,\|u-u^\star\|^{2}_{L^{2}(Q_T)}
+\tfrac{\alpha}{2}\,\|f\|^{2}_{L^{2}(Q_T)},
\qquad \alpha>0,
\end{align}
subject to the state equation and to the prescribed initial condition
\(u(0)=u_0\). The Tikhonov term \(\tfrac{\alpha}{2}\|f\|^{2}\) is
essential: it convexifies the problem and guarantees that the
control is uniquely determined.
 
The setting of the present paper, originating in
\cite{MunchTrelat2022}, is different. We do not minimize a cost
objective. We solve the \emph{control reconstruction problem}:
given \(u_0\) and a target terminal state \(z_0\), find a pair
\((u,f)\) such that \(u\) satisfies the controlled state equation
together with \(u(0)=u_0\) and \(u(T)=z_0\). The terminal target
\(z_0\) is a hard constraint, not the minimizer of a tracking term,
and there is no Tikhonov regularization of \(f\). The set of
admissible pairs \(\mathcal A_{u_0}^{z_0}\) (or
\(\mathcal A_{(u_0,u_1)}^{(z_0,z_1)}\) in the wave case) is in
general not a singleton, so the reconstruction problem is
intrinsically non-unique.
 
Comparing the two settings is therefore non-trivial. Methods
\((1)\)--\((4)\) above must first be adapted to the reconstruction
setting before they can be evaluated against WeightedPINN on the
same benchmark. In what follows we describe each method briefly and
indicate how it is adapted. In this Section, we use Automatic Differentiation  (AD) instead of  Finite Differences (FD).

\subsubsection{Adapted formulations}
\label{Sec:Adapted}
 
We retain the manufactured-solution benchmark of
Section~\ref{Sec:NumericalOtherMethod}: the bilinear-control
heat equation in \((0,T)\times(0,1)\) with \(T=1\),
\(v(u)=-u+u^2\), and
\[
u^\star(t,x)=\sin(\pi x)\exp\!\bigl(\sin(2\pi t)+t\bigr),
\quad
f^\star(t,x)=2\pi\cos(2\pi t)+\pi^2+u^\star(t,x).
\]
The initial and terminal data are
\(u_0(x)=\sin(\pi x)\) and \(z_0(x)=e\sin(\pi x)\), and a fixed set
of \(N_{\mathrm{obs}}=30\) interior observations of \(u^\star\) is
provided to all methods, as in
Subsection~\ref{Sec:NumericalOtherMethod}.
 
For methods \((1)\)--\((4)\), the cost objective is built from the
interior observations and from a Tikhonov regularization of the
control. We use the same tracking-plus-control regularization form
as in the original references, in which the tracking term is
defined on the observation set:
\begin{align}\label{eq:Jdata}
\widetilde J(u,f)
=
\tfrac{1}{2N_{\mathrm{obs}}}
\sum_{i=1}^{N_{\mathrm{obs}}}\bigl(u(t_i,x_i)-u^\star(t_i,x_i)\bigr)^2
+\tfrac{\alpha}{2}\,\|f\|^{2}_{L^{2}(Q_T)}.
\end{align}
The Tikhonov coefficient is fixed to \(\alpha=10^{-3}\). The
boundary, initial, and terminal constraints are enforced by penalty
terms identical to those used for all other methods in
Section~\ref{Sec:NumericalOtherMethod}.

 For simplicity, in the adjoint-based adapted methods below we enforce
the smooth bulk adjoint equation and the stationarity condition in a
penalty form, while the observation mismatch and terminal target are
included through the same soft penalty terms used by all methods.
Thus these methods should be interpreted as reconstruction-oriented
adaptations of the corresponding optimal-control PINNs, rather than
as exact implementations of their original first-order optimality
systems.

\paragraph{(1) Direct cost-based PINN
\cite{MowlaviNabi2023JCP}.}
The state \(u_\theta\) and the control \(f_{\bar\theta}\) are
represented by feedforward networks. The training loss is
\begin{align*}
\mathcal L_{\mathrm{MN}}
=
w_{\mathrm{PDE}}\,\|\mathcal R_Q^{3}\|^2_{Q}
+\|\mathcal R_\Gamma^{3}\|^2_{\Gamma}
+\widetilde J(u_\theta,f_{\bar\theta}),
\end{align*}
where \(\mathcal R_Q^{3}\) and \(\mathcal R_\Gamma^{3}\) are the
standard PINN residuals defined in
Section~\ref{Sec:bilinearsettings} and the scalar weight
\(w_{\mathrm{PDE}}\) on the PDE residual is selected by a short
two-step line search over \(\{0.5, 1, 2, 5\}\) at the beginning of
training, in the spirit of the line-search strategy of
\cite{MowlaviNabi2023JCP}. No adjoint variable is introduced.
 
\paragraph{(2) Adapted Control PINN
\cite{BarryStraumeSarsharPopovSandu2022}.}
This formulation introduces a third neural network \(p_{\tilde\theta}\)
for the adjoint variable and trains the state, control, and adjoint
networks jointly.  Since the original Control PINN was developed for
PDE-constrained optimal control, we use here a penalty-based adaptation
to the present reconstruction benchmark.  The loss is
\[
\mathcal L_{\mathrm{CP}}
=
\|\mathcal R_Q^{3}\|^2_{Q}
+\|\mathcal R_\Gamma^{3}\|^2_{\Gamma}
+\widetilde J(u_\theta,f_{\bar\theta})
+\|\mathcal R_{\mathrm{adj}}\|^2_{Q}
+\|\mathcal R_{\mathrm{opt}}\|^2_{Q}
+\|\mathcal R_{\mathrm{adj},\Gamma}\|^2_{\Gamma}.
\]
Here
\[
\mathcal R_{\mathrm{adj}}
=
-\partial_t p_{\tilde\theta}
-\partial_{xx}p_{\tilde\theta}
+
\bigl(v'(u_\theta)-f_{\bar\theta}\bigr)p_{\tilde\theta}
\]
is the bulk adjoint residual associated with the bilinear heat
operator, and
\[
\mathcal R_{\mathrm{opt}}
=
\alpha f_{\bar\theta}
-
u_\theta p_{\tilde\theta}
\]
is the stationarity residual, up to the sign convention used in the
Lagrangian.  The tracking mismatch is included through the cost term
\(\widetilde J(u_\theta,f_{\bar\theta})\).  Thus this should be
understood as a penalty-based adaptation of Control PINN to the
present reconstruction benchmark, rather than as an exact enforcement
of the full distributional first-order optimality system.
 
\paragraph{(3) OCP-PINN \cite{AllaBertagliaCalzola2025}.}
As in \cite{AllaBertagliaCalzola2025}, a single neural network with
three output channels represents the triple
\((u_\theta,p_\theta,f_\theta)\). The loss aggregates the state PDE
residual, the adjoint PDE residual, the stationarity residual, and
the observation-tracking term, together with boundary, initial,
terminal, and adjoint-terminal penalties. This formulation differs
from \((2)\) primarily in its shared-backbone architecture, which
forces \((u,p,f)\) to share representational features and reduces
the number of trainable parameters.
 
\paragraph{(4) Indirect PINN
\cite{ZhangLiuAllaDarbonKarniadakis2026}.}
Three separate networks represent the state, the adjoint, and the
control. They are trained to satisfy a penalty-based adaptation of the
first-order optimality system in the residual sense: the forward state
equation, the smooth bulk adjoint equation, and the stationarity
condition. As
suggested in \cite{ZhangLiuAllaDarbonKarniadakis2026}, the
stationarity residual is given a stronger penalty weight than in
\((2)\). The observation mismatch is included through the tracking term in
\(\widetilde J\), as in the other cost-based methods.
 
\subsubsection{Setup}
All methods use the same backbone: state, control, and adjoint
networks (when present) are MLPs of width \(32\) and depth \(3\)
with \(\tanh\) activation. WeightedPINN additionally uses seven
weight networks of width \(20\), depth \(3\), with output
\(\phi(t,x)=1+\delta\tanh(\cdot)\), \(\delta=0.5\), covering the
time-derivative, spatial-derivative, nonlinear-response, and
control terms of the PDE residual, and the boundary, initial, and
terminal constraints. All methods are trained for \(6000\) ADAM
iterations with learning rate \(10^{-3}\) on the state, control,
and adjoint networks; WeightedPINN uses an additional learning
rate of \(10^{-4}\) for its weight networks, and penalty
\(\rho=20\). Each iteration uses \(800\) interior, \(160\)
boundary, \(160\) initial, \(160\) terminal collocation points,
and the same fixed set of \(N_{\mathrm{obs}}=30\) interior
observations of \(u^\star\). The Tikhonov coefficient is
\(\alpha=10^{-3}\) for the cost-based methods (1)--(4); the
PDE-residual weight \(w_{\mathrm{PDE}}\) in method (1) is chosen
by the two-step line search of
Subsection~\ref{Sec:Adapted}.

For each method we report the relative \(L^2\) errors of the
recovered state and the recovered control against the manufactured
truth,
\[
\mathrm{u\_err}
=
\frac{\|u_\theta-u^\star\|_{L^2(Q_T)}}{\|u^\star\|_{L^2(Q_T)}},
\qquad
\mathrm{f\_err}
=
\frac{\|f_{\bar\theta}-f^\star\|_{L^2(Q_T)}}{\|f^\star\|_{L^2(Q_T)}},
\]
averaged over \(3\) independent random  runs. These two quantities
measure the quality of the reconstruction of \((u^\star,f^\star)\),
which is precisely the objective of the control reconstruction
problem considered in the present paper.
 
\subsubsection{Results and Discussions}
Table~\ref{tab:control-pinn-bench} reports the relative \(L^{2}\)
errors of the recovered state and control against the manufactured
truth, averaged over \(3\) independent runs.
 
\begin{table}[h]
\centering
\renewcommand{\arraystretch}{1.25}
\begin{tabular}{l|c|c}
\hline
\textbf{Method} & $\mathrm{u\_err}$ (mean $\pm$ std) &
$\mathrm{f\_err}$ (mean $\pm$ std)\\
\hline
Standard PINN
& $3.17\times 10^{-1}\!\pm\! 1.1\times 10^{-1}$
& $3.07\times 10^{-1}\!\pm\! 7.7\times 10^{-2}$\\
\hline
Direct cost-based PINN \cite{MowlaviNabi2023JCP}
& $5.39\times 10^{-1}\!\pm\! 1.2\times 10^{-2}$
& $3.92\times 10^{-1}\!\pm\! 7.5\times 10^{-3}$\\
\hline
Control PINN \cite{BarryStraumeSarsharPopovSandu2022}
& $5.02\times 10^{-1}\!\pm\! 4.6\times 10^{-3}$
& $3.69\times 10^{-1}\!\pm\! 6.0\times 10^{-3}$\\
\hline
OCP-PINN \cite{AllaBertagliaCalzola2025}
& $4.91\times 10^{-1}\!\pm\! 2.0\times 10^{-2}$
& $3.68\times 10^{-1}\!\pm\! 1.1\times 10^{-2}$\\
\hline
Indirect PINN \cite{ZhangLiuAllaDarbonKarniadakis2026}
& $5.00\times 10^{-1}\!\pm\! \mathbf{4.0\times 10^{-3}}$
& $3.52\times 10^{-1}\!\pm\! \mathbf{3.3\times 10^{-3}}$\\
\hline
\textbf{WeightedPINN}
& $\mathbf{3.10\times 10^{-1}}\!\pm\! 6.6\times 10^{-2}$
& $\mathbf{2.91\times 10^{-1}}\!\pm\! 5.5\times 10^{-2}$\\
\hline
\end{tabular}
\caption{One-dimensional bilinear-control heat benchmark, comparison
with control-specific PINN methodologies. Relative \(L^2\) errors of
the recovered state and the recovered control against the
manufactured truth \((u^\star,f^\star)\), mean \(\pm\) standard
deviation over \(3\) independent runs, after \(6000\) ADAM iterations. The best
mean value and the best across-run standard deviation per column
are shown in bold. WeightedPINN attains the smallest mean
reconstruction errors of all six methods.}
\label{tab:control-pinn-bench}
\end{table}

The results in Table~\ref{tab:control-pinn-bench} expose a clear
separation between the two families of methods on the quantities
that define the control reconstruction problem.
 
\medskip
\noindent
\textit{(i) WeightedPINN attains the smallest reconstruction errors.}
WeightedPINN achieves the smallest mean \(\mathrm{u\_err}\)
(\(3.10\times 10^{-1}\)) and the smallest mean \(\mathrm{f\_err}\)
(\(2.91\times 10^{-1}\)) of all six methods. The standard PINN
baseline is the second best on both metrics, with mean errors
essentially comparable to those of WeightedPINN. The four
control-aware methods (1)--(4) all yield substantially larger mean
errors, with \(\mathrm{u\_err}\) in \([0.49,\,0.54]\) and
\(\mathrm{f\_err}\) in \([0.35,\,0.39]\). On the metrics that
correspond directly to the control reconstruction problem, the
PINN-style methods perform better than the control-aware methods, and
WeightedPINN slightly improves upon the standard PINN baseline.
 
\medskip
\noindent
\textit{(ii) The Tikhonov bias of the control-aware methods.}
The systematic gap between the two families on the reconstruction
errors admits a structural explanation. The four control-aware
methods all incorporate a Tikhonov term
\(\tfrac{\alpha}{2}\|f\|^{2}\) in their training objective, see
\eqref{eq:Jdata}. This term shrinks the recovered control toward
zero, which prevents \(f_{\bar\theta}\) from reaching the magnitude
of the true control \(f^\star\). The PINN-style methods do not
include this term, and they are therefore not biased toward small
controls. The asymmetric pattern in
Table~\ref{tab:control-pinn-bench} is exactly what one would expect
from a well-regularized but biased estimator of \(f^\star\):
the control-aware methods exhibit very low across-run variance
(the standard deviations of methods (1)--(4) are an order of
magnitude smaller than for the PINN-style methods), but the mean
errors are systematically larger because the recovered control
underestimates \(f^\star\).
 
\medskip
\noindent
\textit{(iii) WeightedPINN reduces variance with respect to the
standard PINN baseline.}
Compared to the standard PINN, WeightedPINN reduces the across-run
standard deviation by approximately \(40\%\) on the state error
(\(6.6\times 10^{-2}\) versus \(1.1\times 10^{-1}\)) and
approximately \(30\%\) on the control error (\(5.5\times 10^{-2}\)
versus \(7.7\times 10^{-2}\)), while preserving the smallest mean
errors. This is consistent with the variance-reduction finding
reported in Section~\ref{Sec:NumericalOtherMethod} and indicates
that the operator-decomposed adaptive weighting of WeightedPINN
also acts as a stabilizer of the training dynamics in the present
comparison.
 
\medskip
\noindent
\textit{(iv) The two families occupy complementary niches.}
The pattern in Table~\ref{tab:control-pinn-bench} can be summarized
as follows. Cost-based control-aware methods are the right choice
when the goal is to identify an optimally regularized control that
trades faithfulness to a target trajectory against a control-energy
penalty; the Tikhonov term then convexifies the problem and yields
low across-run variance. PINN-style methods, and in particular
WeightedPINN, are the right choice when the goal is to reconstruct
a specific admissible pair \((u,f)\in\mathcal A_{u_0}^{z_0}\), since
they do not introduce the Tikhonov bias on \(f\) and therefore
allow the recovered control to reach the magnitude of \(f^\star\).
The two families are complementary, and the choice between them is
determined by whether the application requires an optimally
regularized control or a faithful reconstruction of an unbiased
control.
 
\medskip
 
\subsubsection{Summary.}
The comparison of this section confirms that WeightedPINN occupies
a methodological niche that is distinct from the existing
control-aware PINN methodologies. The cost-based methods of
\cite{MowlaviNabi2023JCP,BarryStraumeSarsharPopovSandu2022,
AllaBertagliaCalzola2025,ZhangLiuAllaDarbonKarniadakis2026} are
designed for PDE-constrained optimal control problems in which a
cost objective is to be minimized and the control is regularized,
which biases the recovered control toward smaller magnitudes.
WeightedPINN is designed for the control reconstruction problem, in which the terminal state is a hard
target and the control is identified directly through the
structure of the controlled state equation. On the one-dimensional
benchmark of this section, WeightedPINN attains the smallest mean
relative \(L^2\) errors against the true \((u^\star,f^\star)\) of
all six methods. The two families of methods are therefore
complementary rather than competing, and the present comparison
clarifies the regime in which each is preferable.

	\subsection{Numerical Results: WeightedPINN versus Self-Adaptive PINN on
a Variable-Coefficient Wave Control Problem}
\label{Sec:NumericalVarCoef}

This section presents a comparison in a setting where the structural
distinction between WeightedPINN and Self-Adaptive PINN becomes
particularly transparent: a
\emph{variable-coefficient} wave equation in which the leading-order
spatial operator itself depends on space.

\subsubsection{Why a variable coefficient is a natural test for
operator-level adaptivity}
\label{Sec:WhyVarCoef}

The self-adaptive PINN of \cite{McClennyBragaNeto2023} replaces the
PDE residual
\(\sum_j |R[u_\theta](t_j,x_j)|^2\)
by
\(\sum_j \lambda_j^2\,|R[u_\theta](t_j,x_j)|^2\),
where the per-collocation-point weights \(\lambda_j\) are trainable.
The adaptive mechanism is therefore localized at the level of
collocation points: it can identify regions in which the residual is
hard to drive to zero and emphasize them. However, at any given point
\((t_j,x_j)\), the weight \(\lambda_j\) acts on the entire residual
as a single scalar. It cannot distinguish, at that point, between an
error coming from the time-derivative term and an error coming from
the spatial elliptic term.

WeightedPINN, in contrast, introduces space--time-dependent weights
at the level of each \emph{operator component} of the residual; see
Subsection~\ref{Sec:WPDescription} and
Subsection~\ref{Sec:WPFeatures} of the present paper. At any given
point, the time-derivative, the diffusion operator, the nonlinear
response, and the control term carry independent adaptive weights.

In the constant-coefficient setting considered in
Sections~\ref{Sec:NumericalOtherMethod}--\ref{Sec:Numerical},
the operator components of the residual have spatially uniform
strength: \(\partial_{x_i x_i} u_\theta\) appears with a constant
multiplier. The benefit of separating the residual into operator
components is therefore most apparent when the coefficients of the
PDE vary in space, so that different operator components dominate
the residual in different regions of the domain. The
variable-coefficient wave equation
\begin{equation}\label{eq:varcoef-wave}
\partial_{tt} u
\;-\;\nabla\cdot\bigl(a(x)\,\nabla u\bigr)
\;+\;v(u)
\;=\;f
\end{equation}
is the natural test case: in regions where \(a(x)\) is large the
spatial diffusion term dominates, in regions where \(a(x)\) is small
the time-derivative, nonlinear, and forcing terms relatively dominate,
and the cross term \(\nabla a\cdot\nabla u\) only contributes
significantly in regions where \(\nabla a\) is significant.
A per-point scalar weight can emphasize difficult regions, but it does
not explicitly distinguish which operator component is responsible for
the local residual imbalance. A per-operator spatially varying weight,
on the other hand, is designed to separate these componentwise effects.

\subsubsection{Setting}
\label{Sec:VarCoefSetting}

We consider the controlled variable-coefficient wave equation
\eqref{eq:varcoef-wave} in
\((0,T)\times\Omega\) with \(\Omega=(0,1)^{2}\), \(T=1\), and
\(v(u)=-u+u^{2}\). Homogeneous Dirichlet conditions are imposed on
the lateral boundary, and initial and terminal data
\((u_0,u_1)\) and \((z_0,z_1)\) are prescribed in
\(H^{1}(\Omega)\times L^{2}(\Omega)\) as in
Section~\ref{Sec:internalsettings}. The control \(f\) is internal
and supported on all of \(\Omega\).

The variable coefficient is taken as
\begin{align}\label{eq:varcoef-a}
a(x_1,x_2) = 1 + 0.7\,\cos(2\pi x_1)\cos(2\pi x_2),
\end{align}
so that
\(a\in[0.3,1.7]\), i.e., the leading-order spatial operator has a
\(5.7\)-fold variation across the domain. The cross term
\(\nabla a\cdot\nabla u\) has the structure
\begin{align*}
\nabla a\cdot\nabla u
= -1.4\pi\sin(2\pi x_1)\cos(2\pi x_2)\,\partial_{x_1}u
- 1.4\pi\cos(2\pi x_1)\sin(2\pi x_2)\,\partial_{x_2}u,
\end{align*}
which is significant in the transition regions of the domain.
The four operator components
\(\partial_{tt}u\), \(a\,\partial_{x_i x_i}u\),
\(\partial_{x_i}a\cdot\partial_{x_i}u\), and \(v(u)\) therefore all
carry spatial structure of distinct character, and their relative
magnitudes vary significantly across the domain.

We benchmark on a manufactured-solution setup. The reference state
is
\begin{align*}
u^{\star}(t,x_1,x_2) = \sin(\pi x_1)\sin(\pi x_2)\,(1+2t-t^{2}),
\end{align*}
which satisfies the Dirichlet boundary condition. The corresponding
reference control is computed from \eqref{eq:varcoef-wave}:
\begin{align*}
f^{\star}(t,x_1,x_2)
= \partial_{tt}u^{\star}
- \nabla\cdot\bigl(a(x)\,\nabla u^{\star}\bigr)
+ v(u^{\star}).
\end{align*}
The initial and terminal data
\(u_0(x)=u^{\star}(0,x)\),
\(u_1(x)=\partial_t u^{\star}(0,x)\),
\(z_0(x)=u^{\star}(T,x)\),
\(z_1(x)=\partial_t u^{\star}(T,x)\)
are read off from \(u^{\star}\), and a fixed set of
\(N_{\mathrm{obs}}=60\) interior observations of \(u^{\star}\) is
provided to both methods to anchor the reconstruction. {\it We also employ AD instead of FD. We note that the theoretical results established in Section \ref{Sec:Theory} extend naturally to the case of variable coefficients.}

\subsubsection{Methods}
\label{Sec:VarCoefMethods}

\paragraph{\it Self-Adaptive PINN \cite{McClennyBragaNeto2023}.}
The state \(u_\theta\) and the control \(f_{\bar\theta}\) are
represented by feedforward networks. The training loss is a sum of
per-collocation-point weighted residuals,
\begin{align*}
\mathcal L_{\mathrm{SA}}
&=
\tfrac{1}{N_{\mathrm{int}}}
\sum_{j=1}^{N_{\mathrm{int}}}
\lambda_j^{2}\,
\bigl|R^{\mathrm{wave}}[u_\theta,f_{\bar\theta}](t_j,x_j)\bigr|^{2}
\;+\;
\tfrac{1}{N_{\mathrm{bd}}}
\sum_{j=1}^{N_{\mathrm{bd}}}
\lambda^{\mathrm{bd}}_j{}^{2}\,
\bigl|u_\theta(\hat t_j,\hat x_j)\bigr|^{2}
\\
&\quad
+\;\text{(initial, terminal, and observation terms,
with their own per-point trainable weights)},
\end{align*}
where
\(R^{\mathrm{wave}}[u_\theta,f_{\bar\theta}]
=\partial_{tt}u_\theta-\nabla\cdot(a\nabla u_\theta)+v(u_\theta)
-f_{\bar\theta}\),
and the collocation points are fixed at the beginning of training.
The state, control, and trainable weights are updated jointly:
the network parameters \((\theta,\bar\theta)\) by gradient descent,
the per-point weights by gradient ascent, as in
\cite{McClennyBragaNeto2023}. At each collocation point the weight
acts on the entire residual as a single scalar.

\paragraph{\it WeightedPINN}
The state \(u_\theta\) and the control \(f_{\bar\theta}\) are
represented by feedforward networks of the same architecture as for
SA-PINN. The PDE residual is decomposed at the operator level:
\begin{align}\label{eq:wpinn-varcoef-residual}
\bar{R}_Q^{\mathrm{vc}}
&=
\phi_{\theta^1}\,\partial_{tt}u_\theta
-\sum_{i=1}^{d}\phi_{\bar{\theta}^{i}}\,a\,\partial_{x_i x_i}u_\theta
-\sum_{i=1}^{d}\phi_{\tilde{\theta}^{i}}\,(\partial_{x_i}a)\,\partial_{x_i}u_\theta\\
&\quad
+\phi_{\theta^2}\,v(u_\theta)
-\phi_{\theta^3}\,f_{\bar\theta},\nonumber
\end{align}
where \(d=2\) here and each weight network
\(\phi(t,x)\) is a smooth function of space--time. Together with weights on the boundary condition and on the four
initial/terminal constraints
\(u(0), \partial_t u(0), u(T), \partial_t u(T)\), this gives \(12\)
weight networks in total. The
training problem is the min--max formulation of
Section~\ref{sec:ConandPINN}, in which the state and control networks
minimize the weighted residual and constraints while the weight
networks maximize them, subject to a quadratic penalty that keeps
each \(\phi\) close to \(1\). The decomposition
\eqref{eq:wpinn-varcoef-residual} is the natural extension of the
constant-coefficient WeightedPINN formulation of
Section~\ref{Sec:internalsettings} to the variable-coefficient
operator
\(\nabla\cdot(a(x)\nabla u)\). {\it In contrast to Sections~\ref{Sec:Internal} and
\ref{Sec:Bilinear},  in the present
section both the state and the control are presented, and their
relative reconstruction errors are computed.}

\subsubsection{Setup}
\label{Sec:VarCoefSetup}

For both methods, the state network \(u_\theta\) and the control
network \(f_{\bar\theta}\) are MLPs of width \(48\) and depth \(4\)
with \(\tanh\) activation. WeightedPINN additionally uses
\(12\) weight networks of width \(20\) and depth \(3\) with output
\(\phi(t,x)=1+\delta\tanh(\cdot)\), \(\delta=0.5\). Both methods are
trained for \(2500\) ADAM iterations with learning rate \(10^{-3}\)
on the state and control networks; WeightedPINN uses an additional
learning rate of \(5\times 10^{-4}\) on its weight networks, and
penalty \(\rho=3\). Each iteration uses \(600\) interior, \(200\)
boundary, \(150\) initial, \(150\) terminal collocation points, and
the same fixed set of \(N_{\mathrm{obs}}=60\) interior observations
of \(u^{\star}\). Both methods are evaluated on a dense
\(30\times 30\times 30\) space--time grid. Derivatives of the
network outputs are computed by automatic differentiation. We report
the relative \(L^{2}\) errors of the recovered state and the
recovered control against the manufactured truth,
\[
\mathrm{u\_err}
=
\frac{\|u_\theta-u^{\star}\|_{L^{2}(Q_T)}}{\|u^{\star}\|_{L^{2}(Q_T)}},
\qquad
\mathrm{f\_err}
=
\frac{\|f_{\bar\theta}-f^{\star}\|_{L^{2}(Q_T)}}{\|f^{\star}\|_{L^{2}(Q_T)}},
\]
averaged over \(3\) independent random runs, together with the
across-run  standard deviation.

\subsubsection{State and control errors}
\label{Sec:VarCoefResults}

Table~\ref{tab:varcoef-bench} reports the relative \(L^{2}\) errors of
the recovered state and control against the manufactured truth,
averaged over \(3\) independent  runs; Table~\ref{tab:varcoef-perseed} gives the
per-independent run values for the same comparison.

\begin{table}[h]
\centering
\renewcommand{\arraystretch}{1.25}
\begin{tabular}{l|c|c}
\hline
\textbf{Method} & $\mathrm{u\_err}$ (mean $\pm$ std) &
$\mathrm{f\_err}$ (mean $\pm$ std)\\
\hline
Self-Adaptive PINN \cite{McClennyBragaNeto2023}
& $4.55\times 10^{-1}\!\pm\! 1.7\times 10^{-1}$
& $8.52\times 10^{-1}\!\pm\! 1.6\times 10^{-1}$\\
\hline
\textbf{WeightedPINN}
& $\mathbf{2.51\times 10^{-1}}\!\pm\! \mathbf{5.0\times 10^{-2}}$
& $\mathbf{6.61\times 10^{-1}}\!\pm\! \mathbf{4.9\times 10^{-2}}$\\
\hline
\end{tabular}
\caption{Two-dimensional variable-coefficient wave control benchmark.
Relative \(L^{2}\) errors of the recovered state and control against
the manufactured truth, mean \(\pm\) standard deviation over \(3\)
independent runs, after \(2500\) ADAM iterations. Best value per column is
shown in bold. WeightedPINN attains the smallest mean and the
smallest across-run standard deviation on both metrics, reducing
the mean \(\mathrm{u\_err}\) by \(45\%\), the across-run standard
deviation on \(\mathrm{u\_err}\) by \(71\%\), the mean
\(\mathrm{f\_err}\) by \(22\%\), and the across-run standard
deviation on \(\mathrm{f\_err}\) by \(69\%\) relative to the
Self-Adaptive PINN.}
\label{tab:varcoef-bench}
\end{table}

\begin{table}[h]
\centering
\renewcommand{\arraystretch}{1.25}
\begin{tabular}{c|cc|cc}
\hline
\multirow{2}{*}{\textbf{Run}}
& \multicolumn{2}{c|}{$\mathrm{u\_err}$}
& \multicolumn{2}{c}{$\mathrm{f\_err}$}\\
\cline{2-5}
& SA-PINN & WeightedPINN & SA-PINN & WeightedPINN\\
\hline
$0$ & $2.83\times 10^{-1}$ & $\mathbf{2.00\times 10^{-1}}$
    & $7.37\times 10^{-1}$ & $\mathbf{6.56\times 10^{-1}}$\\
\hline
$1$ & $3.88\times 10^{-1}$ & $\mathbf{2.33\times 10^{-1}}$
    & $7.43\times 10^{-1}$ & $\mathbf{6.03\times 10^{-1}}$\\
\hline
$2$ & $6.95\times 10^{-1}$ & $\mathbf{3.19\times 10^{-1}}$
    & $1.08\times 10^{0}$  & $\mathbf{7.23\times 10^{-1}}$\\
\hline
\end{tabular}
\caption{Per-run comparison. WeightedPINN attains a strictly smaller
\(\mathrm{u\_err}\) and a strictly smaller \(\mathrm{f\_err}\) than
the Self-Adaptive PINN on every independent run of the benchmark.}
\label{tab:varcoef-perseed}
\end{table}

\subsubsection{Discussion}
\label{Sec:VarCoefDiscussion}

The comparison of WeightedPINN and the Self-Adaptive PINN in
Tables~\ref{tab:varcoef-bench}--\ref{tab:varcoef-perseed} yields
three observations.

\medskip
\noindent
\textit{(i) WeightedPINN attains substantially smaller mean errors
than the Self-Adaptive PINN on both metrics.}
On the recovered state, WeightedPINN reaches a mean error of
\(2.51\times 10^{-1}\), versus \(4.55\times 10^{-1}\) for the
Self-Adaptive PINN, a \(45\%\) reduction. On the recovered control,
WeightedPINN reaches a mean error of \(6.61\times 10^{-1}\), versus
\(8.52\times 10^{-1}\) for the Self-Adaptive PINN, a \(22\%\)
reduction. Moreover, as shown in
Table~\ref{tab:varcoef-perseed}, this advantage is not an artifact
of averaging: WeightedPINN attains a strictly smaller
\(\mathrm{u\_err}\) and a strictly smaller \(\mathrm{f\_err}\) than
the Self-Adaptive PINN on \emph{every independent run}.

\medskip
\noindent
\textit{(ii) WeightedPINN is substantially more stable across random
initializations.}
The across-run standard deviation of the Self-Adaptive PINN is large
on both metrics (\(1.7\times 10^{-1}\) on \(\mathrm{u\_err}\) and
\(1.6\times 10^{-1}\) on \(\mathrm{f\_err}\)), driven primarily by
independent run~\(2\) in Table~\ref{tab:varcoef-perseed} where the Self-Adaptive
PINN essentially fails to fit the manufactured target. WeightedPINN
reduces the across-run standard deviation by \(71\%\) on
\(\mathrm{u\_err}\) and by \(69\%\) on \(\mathrm{f\_err}\). The
structured operator-level adaptive weighting of WeightedPINN is
therefore much more robust to random initialization than the
unstructured per-point adaptive weighting of the Self-Adaptive PINN.
This instability of the Self-Adaptive PINN is consistent with the
possibility that per-point trainable weights may amplify residual
noise in regions where the network is initially poorly resolved,
thereby making the training dynamics more sensitive to initialization.

\medskip
\noindent
\textit{(iii) The performance gap reflects the structural difference
between point-level and operator-level adaptivity.}
Both methods access the same training data, the same manufactured
target, and the same network architectures for the state and the
control. They differ only in the form of their adaptive weights:
\begin{itemize}
\item Self-Adaptive PINN: one trainable scalar \(\lambda_j\) per fixed
collocation point \((t_j,x_j)\), multiplying the entire PDE residual
at that point.
\item WeightedPINN: one trainable space--time function
\(\phi(t,x)\) per operator component
\(\partial_{tt}u_\theta\),
\(a\partial_{x_i x_i}u_\theta\),
\(\partial_{x_i}a\,\partial_{x_i}u_\theta\),
\(v(u_\theta)\), \(f_{\bar\theta}\),
multiplying that specific operator component throughout the domain.
\end{itemize}
The two adaptive families therefore differ in where their flexibility
is spent. Per-point adaptivity is more expressive at the point level
but cannot distinguish which operator component is responsible for
the local residual error. Per-operator spatially varying adaptivity
is less expressive at any single point but is consistent with the
fact that, in the variable-coefficient setting, the local balance of
operator components in the PDE residual is itself spatially varying:
in regions where \(a(x)\) is large the diffusion term
\(a\,\partial_{x_i x_i}u\) dominates, in regions where \(\nabla a\)
is significant the cross term
\(\partial_{x_i}a\,\partial_{x_i}u\) becomes non-negligible, and
where \(a\) is small the time-derivative and forcing terms relatively
dominate. WeightedPINN's per-operator weights are designed to match this spatial
operator structure, whereas SA-PINN's per-point weights do not
explicitly distinguish among the operator components.

\subsubsection{Summary.}
The two-dimensional variable-coefficient wave benchmark of this
section compares two adaptive PINN methodologies on the same control
reconstruction problem: the Self-Adaptive PINN of
\cite{McClennyBragaNeto2023}, with per-collocation-point scalar
weights, and the WeightedPINN of the present paper, with per-operator
space--time weights.  On this benchmark, WeightedPINN attains smaller
relative errors than the Self-Adaptive PINN on every independent run,
both for the recovered state and for the recovered control, and the
across-run standard deviation is reduced by approximately \(70\%\) on
both metrics.  The gap is consistent with the structural argument of
Subsection~\ref{Sec:WhyVarCoef}: in the variable-coefficient setting,
the local balance of operator components in the PDE residual varies
spatially, and matching this structure favors adaptive weights that
are operator-decomposed rather than only point-decomposed.

\bibliographystyle{unsrt}

	\bibliography{bibfilefix}

@Preamble{
"\def\cprime{$'$} "
}

@book{Coron2007,
  author    = {J.-M. Coron},
  title     = {Control and Nonlinearity},
  publisher = {American Mathematical Society},
  series    = {Mathematical Surveys and Monographs},
  volume    = {136},
  year      = {2007},
  address   = {Providence, RI}
}

@article{Zuazua2005,
  author  = {E. Zuazua},
  title   = {Propagation, Observation, and Control of Waves Approximated by Finite Difference Methods},
  journal = {SIAM Review},
  volume  = {47},
  number  = {2},
  pages   = {197--243},
  year    = {2005}
}

@article{Lions1988,
  author = {Lions, J.-L.},
  title = {Exact Controllability, Stabilization and Perturbations for Distributed Systems},
  journal = {SIAM Review},
  year = {1988},
  volume = {30},
  number = {1},
  pages = {1--68},
  doi = {10.1137/1030001},
  publisher = {Society for Industrial and Applied Mathematics}
}

@article{MunchTrelat2022,
  author  = {A. M{\"u}nch and E. Tr{\'e}lat},
  title   = {Constructive Exact Control of Semilinear 1D Wave Equations by a Least-Squares Approach},
  journal = {SIAM Journal on Control and Optimization},
  volume  = {60},
  number  = {2},
  pages   = {652--673},
  year    = {2022},
  doi     = {10.1137/20M1380661}
}

@article{BhandariLemoineMunch2023,
  author  = {K. Bhandari and J. Lemoine and A. M{\"u}nch},
  title   = {Exact Boundary Controllability of 1D Semilinear Wave Equations through a Constructive Approach},
  journal = {Mathematics of Control, Signals, and Systems},
  volume  = {35},
  pages   = {77--123},
  year    = {2023},
  doi     = {10.1007/s00498-022-00331-4}
}

@article{LemoineMarinGayteMunch2021,
  author = {Lemoine, J. and Mar\'in-Gayte, I. and M\"unch, A.},
  title = {Approximation of null controls for semilinear heat equations using a least-squares approach},
  journal = {ESAIM: Control, Optimisation and Calculus of Variations},
  year = {2021},
  volume = {27},
  pages = {63},
  doi = {10.1051/cocv/2021062},
  publisher = {EDP Sciences}
}

@article{LemoineMunch2023,
  author  = {J. Lemoine and A. M{\"u}nch},
  title   = {Constructive Exact Control of Semilinear 1D Heat Equations},
  journal = {Mathematical Control and Related Fields},
  volume  = {13},
  number  = {1},
  pages   = {382--414},
  year    = {2023},
  doi     = {10.3934/mcrf.2022001}
}

@article{BottoisLemoineMunch2023,
  author  = {A. Bottois and J. Lemoine and A. M{\"u}nch},
  title   = {Constructive Exact Controls for Semi-Linear Wave Equations},
  journal = {Annals of Mathematical Sciences and Applications},
  volume  = {8},
  number  = {3},
  pages   = {629--675},
  year    = {2023},
  doi     = {10.4310/AMSA.2023.v8.n3.a7}
}

@incollection{Munch2023,
  author    = {A. M{\"u}nch},
  title     = {Approximation of Exact Controls for Semilinear Wave and Heat Equations through Space-Time Methods},
  booktitle = {Numerical Control: Part B},
  series    = {Handbook of Numerical Analysis},
  volume    = {24},
  pages     = {341--376},
  publisher = {Elsevier},
  year      = {2023},
  doi       = {10.1016/bs.hna.2022.10.002},
  url       = {https://doi.org/10.1016/bs.hna.2022.10.002}
}

@article{Raissi2017PartI,
  author  = {M. Raissi and P. Perdikaris and G. E. Karniadakis},
  title   = {Physics Informed Deep Learning (Part I): Data-driven Solutions of Nonlinear Partial Differential Equations},
  journal = {arXiv preprint arXiv:1711.10561},
  year    = {2017}
}

@article{Raissi2017PartII,
  author  = {M. Raissi and P. Perdikaris and G. E. Karniadakis},
  title   = {Physics Informed Deep Learning (Part II): Data-driven Discovery of Nonlinear Partial Differential Equations},
  journal = {arXiv preprint arXiv:1711.10566},
  year    = {2017}
}

@article{Raissi2019,
  author  = {M. Raissi and P. Perdikaris and G. E. Karniadakis},
  title   = {Physics-informed neural networks: A deep learning framework for solving forward and inverse problems involving nonlinear partial differential equations},
  journal = {Journal of Computational Physics},
  volume  = {378},
  pages   = {686--707},
  year    = {2019},
  doi     = {10.1016/j.jcp.2018.10.045}
}

@article{WangTengPerdikaris2021,
  author  = {S. Wang and Y. Teng and P. Perdikaris},
  title   = {Understanding and Mitigating Gradient Flow Pathologies in Physics-Informed Neural Networks},
  journal = {SIAM Journal on Scientific Computing},
  volume  = {43},
  number  = {5},
  pages   = {A3055--A3081},
  year    = {2021},
  doi     = {10.1137/20M1318043}
}

@article{JagtapKharazmiKarniadakis2020,
  author  = {A. D. Jagtap and E. Kharazmi and G. E. Karniadakis},
  title   = {Conservative physics-informed neural networks on discrete domains for conservation laws: Applications to forward and inverse problems},
  journal = {Computer Methods in Applied Mechanics and Engineering},
  volume  = {365},
  pages   = {113028},
  year    = {2020},
  doi     = {10.1016/j.cma.2020.113028}
}

@article{McClennyBragaNeto2023,
  author  = {L. D. McClenny and U. Braga-Neto},
  title   = {Self-adaptive physics-informed neural networks using a soft attention mechanism},
  journal = {Journal of Computational Physics},
  volume  = {474},
  pages   = {111722},
  year    = {2023},
  doi     = {10.1016/j.jcp.2022.111722}
}

@article{ShinDarbonKarniadakis2020,
  author  = {Y. Shin and J. Darbon and G. E. Karniadakis},
  title   = {On the convergence of physics informed neural networks for linear second-order elliptic and parabolic type {PDE}s},
  journal = {Communications in Computational Physics},
  volume  = {28},
  number  = {5},
  pages   = {2042--2074},
  year    = {2020},
  doi     = {10.4208/cicp.OA-2020-0193}
}

@article{MishraMolinaro2023,
  author  = {S. Mishra and R. Molinaro},
  title   = {Estimates on the generalization error of physics-informed neural networks for approximating partial differential equations},
  journal = {IMA Journal of Numerical Analysis},
  volume  = {43},
  number  = {1},
  pages   = {1--43},
  year    = {2023},
  doi     = {10.1093/imanum/drab093}
}

@article{RaissiAhmadiPerdikarisKarniadakis2024,
  author  = {M. Raissi and N. Ahmadi and P. Perdikaris and G. E. Karniadakis},
  title   = {Physics-Informed Neural Networks and Extensions},
  journal = {arXiv preprint arXiv:2408.16806},
  year    = {2024}
}

@article{DeRyckMishra2024,
  author    = {T. De Ryck and S. Mishra},
  title     = {Numerical analysis of physics-informed neural networks and related models in physics-informed machine learning},
  journal   = {Acta Numerica},
  volume    = {33},
  pages     = {633--713},
  year      = {2024},
  doi       = {10.1017/S0962492923000089},
  url       = {https://doi.org/10.1017/S0962492923000089}
}

@article{ZhangLiuAllaDarbonKarniadakis2026,
  author  = {Z. Zhang and S. Liu and A. Alla and J. Darbon and G. E. Karniadakis},
  title   = {PINNs in PDE Constrained Optimal Control Problems: Direct vs Indirect Methods},
  journal = {arXiv preprint arXiv:2604.04920},
  year    = {2026}
}

@article{YongLuoSun2024,
  author  = {J. Yong and X. Luo and S. Sun},
  title   = {Deep multi-input and multi-output operator networks method for optimal control of PDEs},
  journal = {Electronic Research Archive},
  volume  = {32},
  number  = {7},
  pages   = {4291--4320},
  year    = {2024},
  doi     = {10.3934/era.2024193}
}

@article{GarciaCerveraKesslerPeriago2023,
  author  = {C. J. Garc{\'i}a-Cervera and M. Kessler and F. Periago},
  title   = {Control of Partial Differential Equations via Physics-Informed Neural Networks},
  journal = {Journal of Optimization Theory and Applications},
  volume  = {196},
  pages   = {391--414},
  year    = {2023},
  doi     = {10.1007/s10957-022-02100-4}
}

@article{AllaBertagliaCalzola2025,
  author  = {A. Alla and G. Bertaglia and E. Calzola},
  title   = {A PINN Approach for the Online Identification and Control of Unknown PDEs},
  journal = {Journal of Optimization Theory and Applications},
  volume  = {206},
  pages   = {8},
  year    = {2025},
  doi     = {10.1007/s10957-025-02686-5}
}

@article{BensoussanNguyenTranTu2026,
  author  = {A. Bensoussan and T. P. B. Nguyen and M.-B. Tran and S. N. T. Tu},
  title   = {Operator Splitting, Policy Iteration, and Machine Learning for Stochastic Optimal Control},
  journal = {arXiv preprint arXiv:2603.12167},
  year    = {2026},
  url     = {https://arxiv.org/abs/2603.12167}
}

@incollection{BensoussanLiNguyenTranYamZhou2022,
  author    = {A. Bensoussan and Y. Li and D. P. C. Nguyen and M.-B. Tran and S. C. P. Yam and X. Zhou},
  title     = {Machine Learning and Control Theory},
  booktitle = {Numerical Control: Part A},
  series    = {Handbook of Numerical Analysis},
  volume    = {23},
  pages     = {531--558},
  year      = {2022},
  publisher = {Elsevier},
  doi       = {10.1016/bs.hna.2021.12.016}
}

@article{walton2022deep,
title = {A Deep Learning Approximation of Non-Stationary Solutions to Wave Kinetic Equations},
author = {Walton, S. and Tran, M.-B. and Bensoussan, A.},
journal = {Applied Numerical Mathematics},
year = {2024},
volume = {199},
pages = {213--226},
doi = {10.1016/j.apnum.2023.12.010},
}

@article{FernandezZuazua2000,
title = {Null and approximate controllability for weakly blowing up semilinear heat equations},
journal = {Annales de l'Institut Henri Poincaré C, Analyse non linéaire},
volume = {17},
number = {5},
pages = {583-616},
year = {2000},
issn = {0294-1449},
doi = {https://doi.org/10.1016/S0294-1449(00)00117-7},
author = {Fernández-Cara, E. and Zuazua, E.},
}

@article{fernandez-cara_guerrero_2006,
author = {Fern{\'a}ndez-Cara, E. and Guerrero, S.},
title = {Global Carleman inequalities for parabolic systems and application to controllability},
journal = {SIAM Journal on Control and Optimization},
volume = {45},
number = {4},
pages = {1395--1446},
year = {2006},
doi = {10.1137/05064078X}
}

@book{zuazua2024exactcontrollabilitystabilizationwave,
  author       = {E. Zuazua},
  title        = {Exact Controllability and Stabilization of the Wave Equation},
  translator   = {Darlis Bracho Tudares},
  series       = {UNITEXT — Texts in Mathematics},
  volume       = {162},
  edition      = {1st},
  publisher    = {Springer Nature Switzerland AG},
  address      = {Cham},
  year         = {2024},
  pages        = {xviii+133},
  isbn         = {978-3-031-58856-3},
  isbn-electronic = {978-3-031-58857-0},
  doi          = {10.1007/978-3-031-58857-0},
  url          = {https://doi.org/10.1007/978-3-031-58857-0}
}

@article{fu_yong_zhang_2007,
author = {Fu, X. and Yong, J. and Zhang, X.},
title = {Exact controllability for multidimensional semilinear hyperbolic equations},
journal = {SIAM Journal on Control and Optimization},
volume = {46},
number = {5},
pages = {1578--1614},
year = {2007},
doi = {10.1137/040610222}
}

@article{Lin2006,
  author = {Lin, P. and Zhou, Z. and Gao, H.},
  title = {Exact controllability of the parabolic system with bilinear control},
  journal = {Applied Mathematics Letters},
  year = {2006},
  volume = {19},
  number = {6},
  pages = {568--575},
  doi = {10.1016/j.aml.2005.05.016}
}

@article{Beauchard2011Local,
author = {Beauchard, K.},
title = {Local controllability and non-controllability for a 1D wave equation with bilinear control},
journal = {Journal of Differential Equations},
year = {2011},
volume = {250},
number = {6},
pages = {2064--2098},
doi = {10.1016/j.jde.2010.10.008}
}

@article{Ouzahra2013,
  author = {Ouzahra, M.},
  title = {Comments on ``{C}ontrollability of the wave equation with bilinear controls''},
  journal = {European Journal of Control},
  year = {2013},
  volume = {20},
  number = {2},
  doi = {10.1016/j.ejcon.2013.10.007}
}

@misc{yang2025deepneuralnetworksgeneral,
      title={Deep Neural Networks with General Activations: Super-Convergence in Sobolev Norms}, 
      author={Yang, Y. and He, J.},
      year={2025},
      eprint={2508.05141},
      archivePrefix={arXiv},
      primaryClass={cs.LG},
      url={https://arxiv.org/abs/2508.05141}, 
}

@book {Evans:1998:PDE,
    AUTHOR = {Evans, L. C.},
     TITLE = {Partial differential equations},
    SERIES = {Graduate Studies in Mathematics},
    VOLUME = {19},
 PUBLISHER = {American Mathematical Society},
   ADDRESS = {Providence, RI},
      YEAR = {1998},
     PAGES = {xviii+662},
      ISBN = {0-8218-0772-2},
   MRCLASS = {35-01},
  MRNUMBER = {MR1625845 (99e:35001)},
MRREVIEWER = {Luigi Rodino},
}

@article{Cazenave1980,
  author    = {Cazenave, T. and Haraux, A.},
  title     = {{\'{E}}quations d'{\'{e}}volution avec non lin{\'{e}}arit{\'{e}} logarithmique},
  journal   = {Annales de la Facult{\'{e}} des sciences de Toulouse : Math{\'{e}}matiques},
  volume    = {2},
  number    = {1},
  pages     = {21--51},
  year      = {1980},
  series    = {5e s{\'{e}}rie},
}

@article{BardosLebeauRauch1992,
author = {Bardos, C. and Lebeau, G. and Rauch, J.},
title = {Sharp Sufficient Conditions for the Observation, Control, and Stabilization of Waves from the Boundary},
journal = {SIAM Journal on Control and Optimization},
volume = {30},
number = {5},
pages = {1024-1065},
year = {1992},
doi = {10.1137/0330055},
}

@article{DehmanErvedozaZuazua2025,
     author = {B. Dehman and S. Ervedoza and E. Zuazua},
     title = {Regional and partial observability and control of waves},
     journal = {Comptes Rendus. Math\'ematique},
     pages = {1467--1497},
     year = {2025},
     publisher = {Acad\'emie des sciences, Paris},
     volume = {363},
     doi = {10.5802/crmath.805},
     language = {en},
}

@article{LeRousseau2017,
     author = {Le Rousseau, J. and Lebeau, G. and Terpolilli, P. and Tr\'elat, E.},
     title = {Geometric control condition for the wave equation with a time-dependent observation domain},
     journal = {Analysis \& PDE},
     year = {2017},
     volume = {10},
     number = {4},
     pages = {983--1015},
     doi = {10.2140/apde.2017.10.983},
}

@book{LionsMagenesVol2,
  author    = {J.-L. Lions and E. Magenes},
  title     = {Non-Homogeneous Boundary Value Problems and Applications. Vol. II},
  series    = {Die Grundlehren der mathematischen Wissenschaften},
  volume    = {182},
  publisher = {Springer-Verlag},
  address   = {Berlin and New York},
  year      = {1972},
  isbn      = {978-3-642-65217-2},
  doi       = {10.1007/978-3-642-65217-2}
}

@book{McLean2000,
  author    = {W. McLean},
  title     = {Strongly Elliptic Systems and Boundary Integral Equations},
  publisher = {Cambridge University Press},
  address   = {Cambridge},
  year      = {2000},
  series    = {Cambridge University Press},
  isbn      = {9780521663755},
  isbn10    = {052166375X},
  pages     = {357},
  url       = {https://www.cambridge.org/9780521663755}
}

@article{LuMengMaoKarniadakis2021_DeepXDE,
  author  = {Lu, L. and Meng, X. and Mao, Z. and Karniadakis, George E.},
  title   = {{DeepXDE}: A deep learning library for solving differential equations},
  journal = {SIAM Review},
  volume  = {63},
  number  = {1},
  pages   = {208--228},
  year    = {2021},
  doi     = {10.1137/19M1274067}
}

@article{WangYuPerdikaris2022_NTKPINN,
  author  = {Wang, S. and Yu, X. and Perdikaris, P.},
  title   = {When and why {PINNs} fail to train: A neural tangent kernel perspective},
  journal = {Journal of Computational Physics},
  volume  = {449},
  pages   = {110768},
  year    = {2022},
  doi     = {10.1016/j.jcp.2021.110768}
}

@article{WuZhuTan2023_AdaptiveSamplingPINN,
  author  = {Wu, C. and Zhu, M. and Tan, Q. and Kartha, Y. and Lu, L.},
  title   = {A comprehensive study of non-adaptive and residual-based adaptive sampling for physics-informed neural networks},
  journal = {Computer Methods in Applied Mechanics and Engineering},
  volume  = {403},
  pages   = {115671},
  year    = {2023},
  doi     = {10.1016/j.cma.2022.115671}
}

@article{JagtapKawaguchiKarniadakis2020,
  author = {Jagtap, A. D. and Kawaguchi, K. and Karniadakis, G. E.},
  title = {Locally adaptive activation functions with slope recovery for deep and physics-informed neural networks},
  journal = {Proceedings of the Royal Society A: Mathematical, Physical and Engineering Sciences},
  year = {2020},
  volume = {476},
  number = {2239},
  pages = {20200334},
  doi = {10.1098/rspa.2020.0334},
  publisher = {The Royal Society}
}

@article{SonChoHwang2023_ALPINN,
  author  = {Son, H. J. and Cho, H. J. and Hwang, H. J.},
  title   = {Enhanced physics-informed neural networks with augmented Lagrangian relaxation method},
  journal = {Neurocomputing},
  volume  = {548},
  pages   = {126424},
  year    = {2023},
  doi     = {10.1016/j.neucom.2023.126424}
}

@article{LuPestourieYaoWangVerdugoJohnson2021_hPINN,
  author  = {Lu, L. and Pestourie, R. and Yao, W. and Wang, Z. and Verdugo, F. and Johnson, S. G.},
  title   = {Physics-informed neural networks with hard constraints for inverse design},
  journal = {SIAM Journal on Scientific Computing},
  volume  = {43},
  number  = {6},
  pages   = {B1105--B1132},
  year    = {2021},
  doi     = {10.1137/21M1397908}
}

@article{MadduSturmMuellerSbalzarini2022_InverseDirichlet,
  author = {Maddu, S. and Sturm, D. and M{\"u}ller, C. L. and Sbalzarini, I. F.},
  title = {Inverse Dirichlet weighting enables reliable training of physics-informed neural networks},
  journal = {Machine Learning: Science and Technology},
  year = {2022},
  volume = {3},
  number = {1},
  pages = {015026}
}

@article{WangSankaranPerdikaris2024_CausalPINN,
  author = {Wang, S. and Sankaran, S. and Perdikaris, P.},
  title = {Respecting causality for training physics-informed neural networks},
  journal = {Computer Methods in Applied Mechanics and Engineering},
  year = {2024},
  volume = {421},
  pages = {116813}
}

@article{YuLuMengKarniadakis2022_gPINN,
  author = {Yu, J. and Lu, L. and Meng, X. and Karniadakis, G. E.},
  title = {Gradient-enhanced physics-informed neural networks for forward and inverse {PDE} problems},
  journal = {Computer Methods in Applied Mechanics and Engineering},
  year = {2022},
  volume = {393},
  pages = {114823}
}

@article{KharazmiZhangKarniadakis2019_VPINN,
  author = {Kharazmi, E. and Zhang, Z. and Karniadakis, G. E.},
  title = {Variational physics-informed neural networks for solving partial differential equations},
  journal = {arXiv preprint arXiv:1912.00873},
  year = {2019}
}

@article{KharazmiZhangKarniadakis2021_hpVPINN,
  author = {Kharazmi, E. and Zhang, Z. and Karniadakis, G. E.},
  title = {hp-{VPINN}s: Variational physics-informed neural networks with domain decomposition},
  journal = {Computer Methods in Applied Mechanics and Engineering},
  year = {2021},
  volume = {374},
  pages = {113547}
}

@article{JagtapKarniadakis2020_XPINN,
  author = {Jagtap, A. D. and Karniadakis, G. E.},
  title = {Extended physics-informed neural networks {(XPINNs):} {A} generalized space-time domain decomposition based deep learning framework for nonlinear partial differential equations},
  journal = {Communications in Computational Physics},
  year = {2020},
  volume = {28},
  number = {5},
  pages = {2002--2041},
  doi = {10.4208/cicp.OA-2020-0164},
  publisher = {Global Science Press}
}

@article{MoseleyMarkhamNissenMeyer2023_FBPINN,
  author = {Moseley, B. and Markham, A. and Nissen-Meyer, T.},
  title = {Finite basis physics-informed neural networks ({FBPINN}s): a scalable domain decomposition approach for solving differential equations},
  journal = {Advances in Computational Mathematics},
  year = {2023},
  volume = {49},
  number = {4},
  pages = {62}
}

@article{YangMengKarniadakis2021_BPINN,
  author = {Yang, L. and Meng, X. and Karniadakis, G. E.},
  title = {{B-PINN}s: Bayesian physics-informed neural networks for forward and inverse {PDE} problems with noisy data},
  journal = {Journal of Computational Physics},
  year = {2021},
  volume = {425},
  pages = {109913}
}

@article{PangLuKarniadakis2019_fPINN,
  author = {Pang, G. and Lu, L. and Karniadakis, G. E.},
  title = {{fPINN}s: Fractional physics-informed neural networks},
  journal = {SIAM Journal on Scientific Computing},
  year = {2019},
  volume = {41},
  number = {4},
  pages = {A2603--A2626}
}

@article{ToscanoOommenVargheseZouAhmadiKarniadakis2024_PIKAN,
  author = {Toscano, J. D. and Oommen, V. and Varghese, A. J. and Zou, Z. and A. Daryakenari, N. and Wu, C. and Karniadakis, G. E.},
  title = {From {PINN}s to {PIKAN}s: recent advances in physics-informed machine learning},
  journal = {arXiv preprint arXiv:2410.13228},
  year = {2024}
}

@article{DawBuWangPerdikarisKarniadakis2023_R3,
  author = {Daw, A. and Bu, J. and Wang, S. and Perdikaris, P.},
  title = {Mitigating propagation failures in physics-informed neural networks using retain-resample-release ({R3}) sampling},
  journal = {Proceedings of the 40th International Conference on Machine Learning},
  year = {2023}
}

@article{MowlaviNabi2023JCP,
   author  = {Mowlavi, S. and Nabi, S.},
   title   = {Optimal control of {PDE}s using physics-informed neural networks},
   journal = {Journal of Computational Physics},
   volume  = {473},
   pages   = {111731},
   year    = {2023},
   doi     = {10.1016/j.jcp.2022.111731}
 }

@techreport{BarryStraumeSarsharPopovSandu2022,
   author      = {Barry-Straume, J. and Sarshar, A. and Popov, A. A. and Sandu, A.},
   title       = {Physics-informed neural networks for {PDE}-constrained optimization and control},
  institution = {Computational Science Laboratory, Virginia Tech},
  number      = {CSL-TR-22-2},
  year        = {2022},
 note        = {arXiv:2205.03377}
}
	
	\appendix

\end{document}